\documentclass{amsart}
\usepackage{enumerate,url,subfigure,tikz,hyperref}

\newtheorem{theorem}{Theorem}
\newtheorem{proposition}[theorem]{Proposition}
\newtheorem{lemma}[theorem]{Lemma}
\theoremstyle{remark}
\newtheorem{remark}[theorem]{Remark}

\title{Permutations and negative beta-shifts}

\author{\'Emilie Charlier}
\address{Institute of Mathematics, University of Li\`ege, All\'ee de la d\'ecouverte 12 (B37) \\ 4000 Li\`ege, Belgium\\ echarlier@ulg.ac.be}
\author{Wolfgang Steiner}
\address{IRIF, CNRS UMR 8243, Universit\'e Paris Diderot -- Paris 7, Case 7014 \\ 75205 Paris Cedex 13, France \\ steiner@irif.fr}


\begin{document}
\begin{abstract}
Elizalde (2011) characterized which permutations can be obtained by ordering consecutive elements in the trajectories of (positive) beta-transformations and beta-shifts. We prove similar results for negative bases beta.
\end{abstract}

\maketitle
\section{Introduction}
The complexity of a dynamical system is usually measured by its entropy. 
For symbolic dynamical systems, the (topological) entropy is the logarithm of the exponential growth rate of the number of distinct patterns of length~$n$. 
Bandt, Keller and Pompe~\cite{Bandt-Keller-Pompe02} proved for piecewise monotonic maps that the entropy is also given by the number of permutatitions defined by consecutive elements in the trajectory of a point. 
Amig\'o, Elizalde and Kennel~\cite{Amigo-Elizalde-Kennel08} and Elizalde~\cite{Elizalde09} studied realizable permutations in full shifts in detail. 
Elizalde~\cite{Elizalde11} extended this study to $\beta$-shifts (with $\beta > 1$), and he determined for each permutation the infimum of those bases~$\beta$ where successive elements of the $\beta$-shift are ordered according to the permutation. 
Archer and Elizalde~\cite{Archer-Elizalde14} considered periodic patterns for full shifts with different orderings. 

We are interested in $\beta$-shifts with $\beta < -1$, which are ordered naturally by the alternating lexicographical order.
While several properties for positive bases have analogs for negative bases, negative $\beta$-shifts also exhibit interesting new phenomena. 
For example, Liao and Steiner~\cite{Liao-Steiner12} showed that the support of the unique absolutely continuous invariant measure of the $\beta$-transformation has more and more gaps as $\beta \to -1$, and they determined the combinatorial structure of the gaps. 

The $\beta$-shift is governed by the $\beta$-expansion of~$1$.
This expansion becomes trivial as $\beta \to 1$, while it is the fixed point of a primitive substitution for $\beta \to -1$, which is aperiodic. 
Hence the only permutations that occur in all $\beta$-shifts with $\beta > 1$ are of the form $j (j{+}1) \cdots n 1 2 \cdots (j{-}1)$, while more permutations are possible for $\beta < -1$. 
Similarly to \cite{Elizalde11}, we determine the set of $(-\beta)$-shifts allowing a given permutation. 
Our main result (Theorem~\ref{t:main}) was obtained independently by Elizalde and Moore~\cite{Elizalde-Moore}.

\section{Definitions and main results} \label{sec:defin-main-results}
For an ordered space~$X$, a map $f:\, X \to X$, a positive integer~$n$, a point $x \in X$ such that $f^i(x) \ne f^j(x)$ for all $0 \le i < j < n$, and a permutation $\pi \in \mathcal{S}_n$, let
\[
\mathrm{Pat}(x, f, n) = \pi \quad \mbox{if} \quad \pi(i) < \pi(j)\ \mbox{for all}\ 1 \le i,  j \le n\ \mbox{with}\ f^{i-1}(x) < f^{j-1}(x).
\]
Otherwise stated, for all $1 \le i \le n$, $\pi(i)=j$ if $f^{i-1}(x)$ is the $j$th element in the ordered list $x,f(x),\ldots,f^{n-1}(x)$. 
For example, if $n=3$ and $f^2(x)<x<f(x)$, then $\mathrm{Pat}(x, f, 3) =231$.
The set of allowed patterns of~$f$ is
\[
\mathcal{A}(f) = \big\{\mathrm{Pat}(x, f, n):\, x \in X,\, n \ge 1\big\}.
\]

For $\beta > 1$, the $\beta$-transformation is 
\[
T_\beta:\, [0,1) \to [0,1), \quad x \mapsto \beta x - \lfloor \beta x\rfloor,
\] 
and Elizalde~\cite{Elizalde11} gave a formula for 
\[
B_+(\pi) = \inf\big\{\beta > 1:\, \pi \in \mathcal{A}(T_{\beta})\big\}.
\] 
Here, we are interested in the \emph{$(-\beta)$-transformation}, which was defined by Ito and Sadahiro~\cite{Ito-Sadahiro09} as $x \mapsto \lfloor \frac{\beta}{\beta+1} - \beta x\rfloor -\beta x$ on the interval $[\frac{-\beta}{\beta+1}, \frac{1}{\beta+1})$. 
We find it more convenient to use the map
\[
T_{-\beta}:\, (0,1] \to (0,1], \quad x \mapsto \lfloor \beta x\rfloor + 1 -\beta x,
\]
which is easily seen to be topologically conjugate to Ito and Sadahiro's one, via $x \mapsto \frac{1}{\beta+1} -x$ (which reverses the order between elements); see Figure~\ref{f:T}.
Theorem~\ref{t:main} below gives a formula for
\[
B_-(\pi) = \inf\big\{\beta > 1:\, \pi \in \mathcal{A}(T_{-\beta})\big\}.
\] 

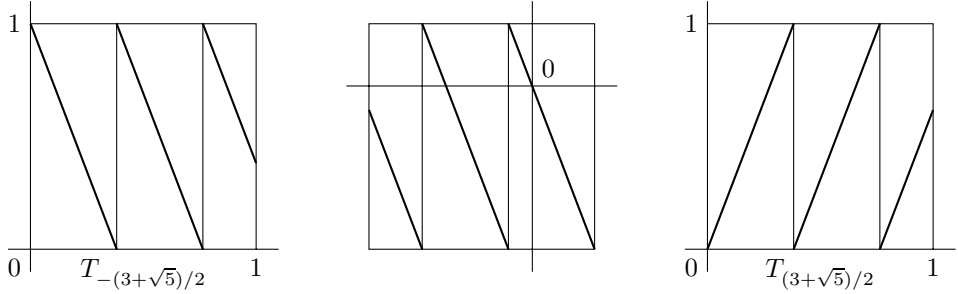
\begin{figure}[ht]
\centering
\begin{tikzpicture}[scale=3]
\draw(-.1,0)--(1.1,0) (0,-.1)--(0,1.1) (1,0)--(1,1)--(0,1) (.382,0)--(.382,1) (.764,0)--(.764,1);
\node[below left] at (0,0){$0$};
\node[below] at (1,0){$1$};
\node[left] at (0,1){$1$};
\node[below] at (.5,0){$T_{-(3+\sqrt{5})/2}$};
\draw[thick](0,1)--(.382,0) (.382,1)--(.764,0) (.764,1)--(1,.382);

\begin{scope}[shift={(2.224,.724)}]
\draw(-.824,0)--(.376,0) (0,-.824)--(0,.376) (-.724,-.724)--(.276,-.724)--(.276,.276)--(-.724,.276)--cycle (-.106,-.724)--(-.106,.276) (-.488,-.724)--(-.488,.276);
\node[above right] at (0,0){$0$};
\draw[thick](-.724,-.106)--(-.488,-.724) (-.488,.276)--(-.106,-.724) (-.106,.276)--(.276,-.724);
\end{scope}

\begin{scope}[shift={(3,0)}]
\draw(-.1,0)--(1.1,0) (0,-.1)--(0,1.1) (1,0)--(1,1)--(0,1) (.382,0)--(.382,1) (.764,0)--(.764,1);
\node[below left] at (0,0){$0$};
\node[below] at (1,0){$1$};
\node[left] at (0,1){$1$};
\node[below] at (.5,0){$T_{(3+\sqrt{5})/2}$};
\draw[thick](0,0)--(.382,1) (.382,0)--(.764,1) (.764,0)--(1,.618);
\end{scope}
\end{tikzpicture}
\caption{$(-\beta)$-transformation $T_{-\beta}$ (left), Ito and Sadahiro's version (middle) and $\beta$-transformation (right), $\beta \approx 2.618$.}
\label{f:T}
\end{figure}

Instead of numbers $x \in (0,1]$, we will rather consider their \emph{$(-\beta)$-expansions} 
\[
x = - \sum_{k=1}^\infty \frac{d_{-\beta,k}(x)+1}{(-\beta)^k} \quad
\mbox{with} \quad d_{-\beta,k}(x) = \big\lfloor \beta\, T_{-\beta}^{k-1}(x)\big\rfloor.
\]
Set $d_{-\beta}(x) = d_{-\beta,1}(x) d_{-\beta,2}(x) \cdots$, and note that $d_{-\beta}(x)\in\{0,1,\ldots,\lfloor\beta\rfloor\}^\infty$.
Then we have $x < y$ if and only if $d_{-\beta}(x) < d_{-\beta}(y)$.
Here and throughout the paper, we use the \emph{alternating lexicographical order} for sequences (or infinite words):
\[
v_1 v_2 \cdots < w_1 w_2 \cdots \quad \mbox{if}\ v_1 \cdots v_{k-1} = w_1 \cdots w_{k-1}\ \mbox{and}\ \begin{cases}v_k < w_k & \mbox{when $k$ is odd}, \\ w_k < v_k & \mbox{when $k$ is even},\end{cases}
\]
for some $k \ge 1$; we will also use it to compare finite words of same length. 
Note that Ito and Sadahiro used an ``alternate order'', which is the inverse of our alternating lexicographical order.
The set of $(-\beta)$-expansions forms the \emph{$(-\beta)$-shift} 
\[
\Omega_{-\beta} = \{d_{-\beta}(x):\, x \in (0,1]\},
\]
which is stable under the shift map $\Sigma\colon w_1 w_2 \cdots \mapsto w_2 w_3 \cdots$. 
Thus, for all $x\in(0,1]$,
\[
\mathrm{Pat}(x,T_{-\beta},n) = \mathrm{Pat}(d_{-\beta}(x),\Sigma,n),
\]
with the alternating lexicographical order on the $(-\beta)$-shift.

By Theorem~10 of~\cite{Ito-Sadahiro09}, we have $w_1 w_2 \cdots \in \Omega_{-\beta}$ if and only if, for all $k \ge 1$, 
\[
d_{-\beta}(1) \ge w_{[k,\infty)} > 
	\begin{cases}\overline{0 d_{-\beta,1}(1) \cdots d_{-\beta,p-1}(1) (d_{-\beta,p}(1){-}1)} 
					&	\mbox{if $d_{-\beta}(1)$ is purely} \\ 
					& 	\hspace{-11em} \mbox{periodic with odd minimal period length $p$}, \\ 
			0 d_{-\beta}(1) & \mbox{otherwise}.
	\end{cases}
\]
We denote by $\overline{w_1 w_2 \cdots w_n}$ the purely periodic sequence with period $w_1 w_2 \cdots w_n$, 
\[
w_{[k,\infty)} = w_k w_{k+1} \cdots, \quad w_{[i,j)} = w_i w_{i+1} \cdots w_{j-1}, \quad w_{[i,j]} = w_i w_{i+1} \cdots w_j..
\]
Note that the ordered space $\Omega_{-\beta}$ is not closed; taking the closure of $\Omega_{-\beta}$ amounts to replacing the strict lower bound for $w_{[k,\infty)}$ by a non-strict one. 
By Theorem~3 of~\cite{Steiner13}, we know that whenever $1 < \alpha < \beta$ we have $d_{-\alpha}(1) < d_{-\beta}(1)$, and hence $\Omega_{-\alpha} \subseteq \Omega_{-\beta}$ and $\mathcal{A}(T_{-\alpha}) \subseteq\mathcal{A}(T_{-\beta})$.

A~number $\beta > 1$ is an \emph{Yrrap number} if $d_{-\beta}(1)$ is eventually periodic. 
By Theorem~2.6 in~\cite{Liao-Steiner12}, each Yrrap number is a Perron number, i.e., an algebraic integer $\beta > 1$ with all its Galois conjugates (except itself) less than $\beta$ in absolute value.
On the other hand, not every Perron number is an Yrrap number. 
However, each Pisot number is an Yrrap number by Theorem~5.15 in~\cite{Frougny-Lai11}.

For a bounded sequence $w = w_1 w_2 \cdots \in \mathbb{N}^\infty$, where $\mathbb{N}$ denotes the set of nonnegative integers, let
\[
\widehat{w} = \sup_{k\ge1} w_{[k,\infty)}.
\]
Let $\varphi$ be the substitution defined by $\varphi(0) = 1$, $\varphi(1) = 100$, with the unique fixed point $u = \varphi(u)$, i.e.,
\[
u = 1 0 0 1 1 1 0 0 1 0 0 1 0 0 1 1 1 0 0 1 1 \cdots.
\]
If $\widehat{w} = w$ and $w > u$, then let $b(w) > 1$ be the largest positive solution $x$ of 
\[
1 + \sum_{k=1}^\infty \frac{w_k+1}{(-x)^k} = 0,
\]
which exists by Corollary~1 of~\cite{Steiner13}.
Note that $b(w) \le \max_{k\ge1} w_k+1$.
If $w$ is eventually periodic with preperiod of length~$q$ and period of length~$p$, then $b(w)$ is the largest positive root of the polynomial
\[
\bigg((-x)^{p+q} + \sum_{k=1}^{p+q} (w_k+1)\, (-x)^{p+q-k}\bigg) - \bigg((-x)^q + \sum_{k=1}^q (w_k+1)\, (-x)^{q-k}\bigg). 
\]
If $\widehat{w} = w$ and $w \le u$, then we set $b(w) = 1$. 

Throughout the paper, let $\pi \in \mathcal{S}_n$ be an arbitrary but fixed permutation and
\[
m = \pi^{-1}(n), \quad \ell = \pi^{-1}(\pi(n)-1) \ \mbox{if}\ \pi(n) \ne 1, \quad r = \pi^{-1}(\pi(n)+1) \ \mbox{if}\ \pi(n) \ne n.
\]
The sequence of digits $z_{[1,n)}$ defined by
\begin{align*}
z_j = \# \{1 \le i < \pi(j):\ & i \ne \pi(n) \ne i+1,\ \pi(\pi^{-1}(i)+1) < \pi(\pi^{-1}(i+1)+1), \\ & \mbox{or}\ i+1 = \pi(n) \ne n,\ \pi(\ell+1) < \pi(r+1)\}
\end{align*}
will play an important role. 
See the examples in Section~\ref{sec:examples} for an effective computation of these digits~$z_j$.
Following~\cite{Elizalde-Moore}, we say that $\pi$ is \emph{collapsed} if $\pi(n) \notin \{1,n\}$ and $z_{[\ell,n)} = z_{[r,n)} z_{[r,n)}$ or $z_{[r,n)} = z_{[\ell,n)} z_{[\ell,n)}$.
In this case, we also use the digits
\[
z_j^{(i)} = z_j + \begin{cases}1 & \mbox{if}\ \pi(j) \ge \pi(r+i)\ \mbox{and $i$ is even},\ \mbox{or}\ \pi(j) \ge \pi(\ell+i)\ \mbox{and $i$ is odd}, \\ 0 & \mbox{otherwise},\end{cases}
\]
for $0 \le i < |r-\ell|$, $1 \le j < n$.

\begin{theorem} \label{t:main}
Let $\pi \in \mathcal{S}_n$, $\beta > 1$.
We have $\pi \in \mathcal{A}(T_{-\beta})$ if and only if $\beta > b(a)$, 
with
\[
a = \begin{cases}z_{[m,n)}\, \overline{z_{[\ell,n)}} & \mbox{if $n-m$ is even, $\pi(n) \ne 1$, and $\pi$ is not collapsed}, \\ 
\overline{z_{[m,n)} 0} & \mbox{if $n-m$ is even and}\ \pi(n) = 1, \\[.5ex] 
\min_{0\le i<|r-\ell|} z^{(i)}_{[m,n)}\, \overline{z^{(i)}_{[\ell,n)}} & \mbox{if $n-m$ is even and $\pi$ is collapsed}, \\[.5ex] 
z_{[m,n)}\, \overline{z_{[r,n)}} & \mbox{if $n-m$ is odd and $\pi$ is not collapsed}, \\
\min_{0\le i<|r-\ell|} z^{(i)}_{[m,n)}\, \overline{z^{(i)}_{[r,n)}} & \mbox{if $n-m$ is odd and $\pi$ is collapsed}.\end{cases}
\]
In particular $B_-(\pi) = b(a)$, and $B_-(\pi)$ is either~$1$ or an Yrrap number. 
\end{theorem}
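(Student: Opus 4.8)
The plan is to translate realizability into a statement about shift-maximal sequences and then to identify~$a$ as the smallest such sequence compatible with~$\pi$. By the conjugacy recorded above, $\pi \in \mathcal{A}(T_{-\beta})$ holds if and only if there is a sequence $w \in \Omega_{-\beta}$ with $\mathrm{Pat}(w, \Sigma, n) = \pi$, and by the inclusion $\Omega_{-\alpha} \subseteq \Omega_{-\beta}$ for $\alpha < \beta$ this property is monotone in~$\beta$. The Ito--Sadahiro condition says that $w \in \Omega_{-\beta}$ requires $\widehat{w} \le d_{-\beta}(1)$ together with a strict lower bound; since $d_{-\beta}(1)$ is strictly increasing in~$\beta$ and equals~$a$ exactly when $\beta = b(a)$, the upper constraint $\widehat{w} \le d_{-\beta}(1)$ holds precisely for $\beta \ge b(\widehat{w})$. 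I would therefore first reduce the theorem to the identity $B_-(\pi) = \inf\{b(\widehat{w}):\, \mathrm{Pat}(w,\Sigma,n) = \pi\}$, so that, $b$ being monotone, everything comes down to minimizing $\widehat{w}$ over all realizers~$w$.

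Second, I would pin down the digits forced by the pattern. Comparing two shifts $\Sigma^{i-1}(w)$ and $\Sigma^{j-1}(w)$ in the alternating lexicographic order, they agree in the leading symbol exactly when $w_i = w_j$, and in that case the order between them is the reverse of the order between $\Sigma^i(w)$ and $\Sigma^j(w)$; hence, listing the positions in increasing rank, the leading digits $w_{\pi^{-1}(1)}, w_{\pi^{-1}(2)}, \ldots$ are nondecreasing and increase between consecutive ranks $i, i+1$ exactly when $\pi(\pi^{-1}(i)+1) < \pi(\pi^{-1}(i+1)+1)$. This is precisely the count recorded by~$z_j$, the only delicate point being position~$n$, whose successor lies in the free tail: there the comparison with the rank-neighbors $\ell$ and~$r$ of rank $\pi(n)$ replaces the missing symbol, which is what the clauses involving $\pi(\ell+1) < \pi(r+1)$ encode. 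I would conclude that the minimal realizer has $w_j = z_j$ for $1 \le j < n$, so that $a$ becomes a lower bound for $\widehat{w}$ over all realizers.

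Third, I would analyze the free tail and assemble the formula for~$a$. The largest of the first $n$ shifts is $\Sigma^{m-1}(w)$, so $\widehat{w} = \Sigma^{m-1}(w)$ begins with $z_{[m,n)}$ and then continues with the tail from position~$n$; this shift must sit strictly between its neighbors $\Sigma^{\ell-1}(w)$ and $\Sigma^{r-1}(w)$. Making the tail periodic forces it to merge, at the limiting base, with one of these neighbors, and the parity of $n-m$, which governs the direction of the alternating order at the place where the free tail begins, selects~$\ell$ or~$r$; this yields the periodic tails $\overline{z_{[\ell,n)}}$ and $\overline{z_{[r,n)}}$ in the two generic cases, while $\pi(n)=1$ removes the lower neighbor and produces $\overline{z_{[m,n)}0}$. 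For $\beta > b(a)$ one has $d_{-\beta}(1) > a$, leaving room to perturb the tail so that $\Sigma^{n-1}(w)$ lands strictly between its neighbors and genuinely realizes~$\pi$; at $\beta = b(a)$ the two shifts coincide and the pattern degenerates, forcing the strict inequality $\beta > b(a)$. Since $a$ is eventually periodic in every case, $d_{-b(a)}(1) = a$ is eventually periodic and $b(a)$ is an Yrrap number, unless $a \le u$, in which case $b(a) = 1$.

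The main obstacle will be the collapsed case together with the accompanying parity bookkeeping. When $z_{[\ell,n)}$ and $z_{[r,n)}$ are powers of a common block the two candidate tails coincide and it is no longer clear with which neighbor $\Sigma^{n-1}(w)$ should merge; the minimal $\widehat{w}$ is then attained by one of the intermediate alignments, which is exactly why the perturbed digits $z_j^{(i)}$ and the minimum over $0 \le i < |r-\ell|$ appear. Verifying that these perturbations enumerate all the relevant tails, that the resulting sequence is genuinely shift-maximal (so that $\widehat{w}$ is attained at position~$m$ and not inside the periodic tail), and that the strict lower admissibility bound holds automatically for $\beta > b(a)$, is where the bulk of the careful combinatorial work lies.
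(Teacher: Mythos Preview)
Your plan follows the same architecture as the paper's proof: reduce to the symbolic shift, characterize realizers by the digit inequalities that produce the $z_j$, identify $a$ as the infimum of $\widehat{w}$ over realizers, and treat the boundary $\beta = b(a)$ separately. Two points deserve correction.

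First, the assertion ``$d_{-\beta}(1)$ \dots\ equals $a$ exactly when $\beta = b(a)$'' and its reprise ``$d_{-b(a)}(1) = a$'' are not true in general. What holds is $a \in W_{-b(a)}$, but $W_{-b(a)}$ is not a singleton when $d_{-b(a)}(1)$ is purely periodic: if $d_{-b(a)}(1) = \overline{v}$ with $v$ primitive and $|v|$ odd, one may have $a = \overline{v'} \ne \overline{v}$. This does not affect the inequality $\beta > b(a) \Rightarrow a < d_{-\beta}(1)$, so your upper-bound argument survives, but it does affect the Yrrap conclusion (you cannot simply say $d_{-b(a)}(1) = a$ is eventually periodic; you need that $a \in W_{-b(a)}$ forces $d_{-b(a)}(1)$ to be eventually periodic via the $\{v,v'\}^\infty$ description).

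Second, and more seriously, your boundary argument ``at $\beta = b(a)$ the two shifts coincide and the pattern degenerates'' only treats the candidate realizer $w$ with $w_{[m,\infty)} = a$. To conclude $\pi \notin \mathcal{A}(T_{-b(a)})$ you must rule out \emph{every} $w \in \Omega_{-b(a)}$ with $\mathrm{Pat}(w,\Sigma,n)=\pi$. In the situation $a = \overline{v'}$, $d_{-b(a)}(1) = \overline{v}$, $|v|$ odd, there is genuine room: a realizer can satisfy $a < w_{[m,\infty)} \le \overline{v}$, and one has to argue separately (using that $v'$ ends in $0$, that $w$ cannot end in $0\overline{v}$, and, in the collapsed case, that the other $z^{(j)}$ also fall in $W_{-b(a)}$) that no such $w$ exists. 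The paper devotes a full paragraph with several subcases to this, and your sketch does not indicate awareness of the issue.
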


Note that $z_{[m,n)}\, \overline{z_{[\ell,n)}} = \overline{z_{[m,n)} z_{[\ell,m)}}$ if $\ell < m$, $z_{[m,n)}\, \overline{z_{[\ell,n)}} = z_{[m,\ell)}\, \overline{z_{[\ell,n)}}$ if $\ell > m$.

\begin{theorem} \label{t:1}
We have $B_-(\pi) = 1$ if and only if $a = \overline{\varphi^k(0)}$ for some $k \ge 0$. 
\end{theorem}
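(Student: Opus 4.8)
The plan is to reduce the statement to a purely combinatorial property of the alternating order and the substitution $\varphi$, and then to prove that property by a desubstitution (recognizability) argument. By Theorem~\ref{t:main} we have $B_-(\pi)=b(a)$, and the sequence $a$ produced there satisfies $\widehat a=a$ (this is what makes $b(a)$ defined). By the very definition of $b$, we have $b(a)=1$ precisely when $a\le u$, and $b(a)>1$ when $a>u$. Hence $B_-(\pi)=1$ if and only if $a\le u$, and the whole statement follows once we prove the following lemma: \emph{for an eventually periodic sequence $a$ with $\widehat a=a$, one has $a\le u$ if and only if $a=\overline{\varphi^k(0)}$ for some $k\ge0$.} The engine behind both directions is the observation that $\varphi$ is strictly order-preserving for the alternating lexicographic order on $\{0,1\}^\infty$: since $|\varphi(0)|=1$ and $|\varphi(1)|=3$ are both odd, each image block occupies positions whose parity matches that of the letter it replaces, and comparing $\varphi(v)$ with $\varphi(w)$ at the first block where $v$ and $w$ differ reduces, after the common leading $1$ of the two blocks $1$ and $100$, to a single comparison at a position of the correct parity. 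I will isolate this as a preliminary lemma.

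For the easy direction I use that $\varphi(u)=u$ and $\overline0\le u$ (they first differ at the odd position~$1$, where $0<1$). Applying the order-preserving map $\varphi$ repeatedly and using $\varphi(\overline{v})=\overline{\varphi(v)}$ gives $\overline{\varphi^k(0)}=\varphi^k(\overline0)\le\varphi^k(u)=u$ for every $k\ge0$. One also checks $\widehat{\overline{\varphi^k(0)}}=\overline{\varphi^k(0)}$: the block-aligned shifts are handled inductively through order-preservation, while every non-aligned shift begins with a $0$ and hence lies below the sequence itself, which begins with~$1$. Thus each $\overline{\varphi^k(0)}$ is eligible and $b(\overline{\varphi^k(0)})=1$.

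The substantial direction, and the main obstacle, is to show that these are the only eligible sequences below $u$. Let $a$ be eventually periodic with $\widehat a=a$, $a\le u$, and $a\ne\overline0$. First, $\widehat a=a$ forces $a_1=\max_k a_k$ (the largest shift starts with the largest digit, at the odd position~$1$), while $a\le u$ forces $a_1\le u_1=1$; hence $a\in\{0,1\}^\infty$ and $a_1=1$. Next, every shift satisfies $\Sigma^{j}a\le\widehat a=a\le u$; comparing with the prefix $100$ of $u$ shows that $a$ can contain neither the factor $101$ (which would make some shift exceed $u$ at the odd position~$3$) nor the factor $1000$ (which would make some shift exceed $u$ at the even position~$4$). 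Therefore every maximal run of $0$'s in $a$ has length exactly~$2$, so $a$ is a concatenation of the blocks $1$ and $100$; by unique local decoding this means $a=\varphi(a')$ for a uniquely determined $a'\in\{0,1\}^\infty$. The hard technical points are exactly here: that the factor constraints force this block structure, and that the decoding is recognizable. Once $a=\varphi(a')$, order-preservation (which also reflects the order, being injective) together with $a=\varphi(a')\le u=\varphi(u)$ gives $a'\le u$; the same reasoning applied to the shifts gives $\widehat{a'}=a'$; and $a'$ is again eventually periodic with strictly shorter period. An induction on the period length, whose base case is $a'=\overline0$, the only eligible sequence below $u$ containing no $1$, yields $a'=\overline{\varphi^{k-1}(0)}$, whence $a=\varphi(a')=\overline{\varphi^{k}(0)}$, completing the proof.
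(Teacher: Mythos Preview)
Your argument is correct and takes a genuinely different route from the paper. The paper's proof is two lines: it invokes Lemma~\ref{l:bw} to place $a\in W_{-b(a)}$, then Lemma~\ref{l:Wbeta} --- whose content, that every sequence with $\widehat w=w\le u$ is either $u$ or some $\overline{\varphi^k(0)}$, is imported wholesale from Theorem~1 of~\cite{NguemaNdong16} --- and finally discards $u$ by aperiodicity. You instead prove this classification directly for eventually periodic sequences, via the strict order-preservation of~$\varphi$ under the alternating order and a recognizability/desubstitution induction. Your route is self-contained and reproves (the eventually periodic case of) the cited external result; the paper's route is shorter but outsources the only substantive step.

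One step deserves an extra sentence of justification: that $a'$ has \emph{strictly} shorter period in the inductive step $a'\ne\overline0$. The danger is that $a'$ might have only finitely many $1$'s, making both $a$ and $a'$ eventually constant with period~$1$ (as happens for $a=\overline1$, $a'=\overline0$). This is ruled out once you observe that your forbidden-factor argument applies verbatim to~$a'$: since $\widehat{a'}=a'\le u$ and $a'\ne\overline0$, also $a'\in\{1,100\}^\infty$, so $a'$ is not eventually~$\overline0$; hence $a=\varphi(a')$ contains infinitely many blocks $100$, one eventual period of $a$ contains a~$0$, and the number of $\varphi$-blocks in that period is strictly smaller than its length. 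Alternatively, induct on $\#\{\Sigma^j a:j\ge0\}$, which drops under desubstitution whenever $a'\ne\overline0$, since the non-block-aligned shifts of $a$ (those beginning inside a $100$) have no $\varphi$-preimage.
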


It would be interesting to count the permutations with $B_-(\pi) = 1$.
From Bandt, Keller and Pompe~\cite{Bandt-Keller-Pompe02}, we know that $\lim_{n\to\infty} \frac{1}{n} \log \#\{\pi \in \mathcal{S}_n:\, B_-(\pi) < \beta\} = \log \beta$ (which is the entropy of the $(-\beta)$-transformation) for $\beta > 1$, but we do not know whether $c_n = \#\{\pi \in \mathcal{S}_n:\, B_-(\pi) = 1\}$ grows polynomially; we have
\[
c_2 = 2,\ c_3 = 5,\ c_4 = 12,\ c_5 = 19,\ c_6 = 34,\ c_7 = 57,\ c_8 = 82,\ c_9 = 115,\ \dots
\]
Recall that $\#\{\pi \in \mathcal{S}_n:\, B_+(\pi) = 1\} = n$. 
It would also be interesting to give more precise asymptotics for the number of permutations with $B_-(\pi) < N$ or $B_-(\pi) \le N$ for some integer $N \ge 2$. 
The following theorem characterizes $B_-(\pi) < N$.

\begin{theorem} \label{t:N}
Let $\pi \in \mathcal{S}_n$, $n \ge 2$.
The minimal number of distinct elements of a sequence~$w$ satisfying $\mathrm{Pat}(w,\Sigma,n) = \pi$, w.r.t.\ the alternating lexicographical order,~is
\[
N_-(\pi) = \lfloor B_-(\pi) \rfloor + 1 = \max_{1\le j<n} z_j + 1 + \epsilon,
\]
with $\epsilon = 1$ if $\pi$ is collapsed or $a = \max_{1\le j<n} \overline{z_j 0}$, $\epsilon = 0$ otherwise. 
We have $N_-(\pi) \le n-1$ for all $\pi \in \mathcal{S}_n$, $n \ge 3$, with equality for $n \ge 4$ if and only if
\[
\pi \in \{12\cdots n,\ 12\cdots(n{-}2)n(n{-}1),\ n(n-1)\cdots1,\ n(n-1)\cdots312\}.
\]
Moreover, for $n\ge 3$, we have
\[
\max_{\pi\in\mathcal{S}_n} B_-(\pi) = b\big((n{-}2)(n{-}3)\cdots 1\overline{0}\big) \in (n-2,n-1),
\]
the maximum is attained only for $\pi = n(n{-}1)\cdots1$ if $n$ is even, $\pi = n(n{-}1)\cdots 312$ if $n$ is odd. 
\end{theorem}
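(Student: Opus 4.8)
The plan is to prove the three assertions in turn: the two equalities defining $N_-(\pi)$, the bound $N_-(\pi)\le n-1$ with its equality cases, and the value of $\max_\pi B_-(\pi)$. The backbone throughout is Theorem~\ref{t:main} (the identity $B_-(\pi)=b(a)$ and the explicit shape of $a$) together with the monotonicity $d_{-\alpha}(1)<d_{-\beta}(1)$ and $\mathcal{A}(T_{-\alpha})\subseteq\mathcal{A}(T_{-\beta})$ for $\alpha<\beta$. For the first equality $N_-(\pi)=\lfloor B_-(\pi)\rfloor+1$, I would observe that relabelling the values of a sequence preserves its pattern, so the minimal number of distinct values realizing $\pi$ is the least $k$ for which $\pi$ occurs in the full shift $\{0,\ldots,k-1\}^\infty$ under the alternating lexicographical order. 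The largest element of this full shift is $\overline{(k-1)0}$, and a direct evaluation of the defining equation gives $b(\overline{(k-1)0})=k$; since a finite pattern appears in the full shift on $k$ letters if and only if it appears in $\Omega_{-\beta}$ for some $\beta<k$ (as $d_{-\beta}(1)\to\overline{(k-1)0}$ when $\beta\to k^-$, while $\Omega_{-\beta}$ uses only the letters $0,\ldots,\lfloor\beta\rfloor$), realizability over $k$ letters is equivalent to $B_-(\pi)<k$ by Theorem~\ref{t:main}. The least such integer is $\lfloor B_-(\pi)\rfloor+1$.

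For the second equality I would analyze $\lfloor b(a)\rfloor$. Because $\widehat a=a$, the leading digit satisfies $a_1=\max_k a_k$; and since the condition constraining $i$ in the definition of $z_j$ does not depend on $j$, the count $z_j$ is nondecreasing in $\pi(j)$, so $\max_{1\le j<n}z_j$ is attained at the largest available value of $\pi$, which is exactly the first digit of $a$ (namely $z_m$, or $z_\ell$ when $m=n$). Using $b(w)\le\max_k w_k+1$ together with the comparisons $a>\overline{(\max_k a_k-1)0}$ and $a\le\overline{(\max_k a_k)0}$, monotonicity of $b$ and the value $b(\overline{(k-1)0})=k$ yield $\max_k a_k<b(a)\le\max_k a_k+1$, with $b(a)=\max_k a_k+1$ precisely when $a=\overline{(\max_k a_k)0}$. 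In the non-collapsed cases $\max_k a_k=\max_{j<n}z_j$, so $\lfloor b(a)\rfloor=\max_{j<n}z_j$ unless $a=\max_{1\le j<n}\overline{z_j0}$ (the isolated boundary), matching $\epsilon$. In the collapsed cases the digits $z_j^{(i)}$ raise the leading digit by exactly one (as $\pi(m)=n$ forces the increment at $j=m$), so $\max_k a_k=\max_{j<n}z_j+1$, again matching $\epsilon=1$.

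For the bound $N_-(\pi)\le n-1$ I would first show $\max_{1\le j<n}z_j\le n-2$: the index $i=\pi(n)$ (or $i=n-1$ when $\pi(n)=n$) violates the condition defining $z_j$, so at least one of the $n-1$ indices is uncounted. This gives $N_-\le n-1+\epsilon$, and it remains to prove that $\max_{j<n}z_j=n-2$ forces $\epsilon=0$, which I would establish by showing the extremal profile (all but one index counted) is incompatible both with being collapsed and with $a=\overline{(n-2)0}$. For the equality cases I would compute $z_{[1,n)}$, $\ell$, $r$, and the applicable case of Theorem~\ref{t:main} for each of the four listed permutations, verifying $\max z_j=n-2$ and $\epsilon=0$ (for instance the identity gives $a=\overline{n-2}$, and the reversal gives $z_j=n-1-j$), and conversely show that $N_-=n-1$ forces $\pi$ into one of these four shapes.

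Finally, for $\max_\pi B_-(\pi)$ I would use that $b$ is nondecreasing for the alternating lexicographical order, a consequence of $d_{-\alpha}(1)<d_{-\beta}(1)$, so it suffices to maximize $a$ over $\pi\in\mathcal{S}_n$. Since $\max_k a_k\le n-2$, the alternating-lex-largest admissible candidate is $(n{-}2)(n{-}3)\cdots1\,\overline0$; comparing it with $\overline{(n-3)0}$ and $\overline{(n-2)0}$ and invoking $b(\overline{(k-1)0})=k$ gives $b\big((n{-}2)\cdots1\overline0\big)\in(n-2,n-1)$. I would then check that the reversal realizes this $a$ when $n$ is even (its case in Theorem~\ref{t:main} is $z_{[m,n)}\overline{z_{[r,n)}}$, producing $(n{-}2)\cdots1\overline0$), whereas for $n$ odd the reversal's case becomes $\overline{z_{[m,n)}0}$, which is alternating-lex-smaller, and the maximum passes instead to $n(n{-}1)\cdots312$. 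The main obstacle is precisely this last optimization: understanding the map $\pi\mapsto a$ well enough to pin down the alternating-lex-maximal sequence and the exact permutations achieving it, where the parity of $n-m$ (which governs whether the $\ell$- or $r$-period enters $a$) produces the parity-dependent answer and demands the most delicate case analysis.
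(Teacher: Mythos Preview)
Your overall architecture matches the paper's: reduce everything to Theorem~\ref{t:main} and analyze $b(a)$. But two steps do not go through as written.

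First, the inequality ``$\max_k a_k<b(a)$'' is false. Take $\pi=12\cdots n$: then $a=\overline{n{-}2}$ and $b(a)=n-2=\max_k a_k$. More generally, for any $a$ with $\widehat a=a$ and $a_1=c$ one only gets $c\le b(a)\le c+1$ (both endpoints can be attained, since $\overline{c}$ and $\overline{(c{-}1)0}$ both lie in $W_{-c}$, while $\overline{c0}\in W_{-(c+1)}$). So your route to $\lfloor b(a)\rfloor=c$ via a strict lower bound breaks; you need instead the observation that $b(a)=c+1$ forces $a\in W_{-(c+1)}\cap\{0,\dots,c\}^\infty=\{\overline{c0}\}$. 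Relatedly, your full-shift argument for $N_-(\pi)=\lfloor B_-(\pi)\rfloor+1$ hides the same boundary issue: when $\widehat w=\overline{(k{-}1)0}$ the sequence $w$ need not lie in any $\Omega_{-\beta}$ with $\beta<k$, so ``realizable on $k$ letters $\Rightarrow B_-(\pi)<k$'' is not yet justified. The paper avoids this by bounding $N_-(\pi)$ from below directly: Proposition~\ref{p:pat}\,(\ref{i:1}) gives $w_j\ge z_j$, hence $N_-(\pi)\ge c+1$; Lemma~\ref{l:zi} gives $w_m\ge z_m+1$ in the collapsed case; and for $a=\overline{c0}$ with $c\ge1$ one uses $w_{[m,\infty)}>a$ from the proof of Theorem~\ref{t:main} to force a letter $>c$.

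Second, in the collapsed case you only argue that the leading digit increases by one, i.e.\ $a_1=c+1$. That yields $\lfloor b(a)\rfloor\in\{c+1,c+2\}$, and you must still exclude $a=\overline{(c{+}1)0}$ to land on $\epsilon=1$. This exclusion is not automatic: the paper devotes a short but genuine argument to it, using the almost-primitivity of $z_{[\ell,n)}$ and $z_{[r,n)}$ (Lemma~\ref{l:period}) to force $|r-\ell|=1$ and $c=0$, and then checking the resulting $z^{(0)}$ by parity of $n-m$ to reach a contradiction. Without this step the formula $N_-(\pi)=c+1+\epsilon$ is unproved in the collapsed case. The remaining parts of your sketch (the bound $c\le n-2$, the four equality permutations, and the maximization of $a$) are along the paper's lines and can be completed once these two gaps are closed.
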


We mention without proof that $b((n{-}2)(n{-}3)\cdots 1\overline{0})$, which is the largest root of $x^{n-1}-(n-2)x^{n-2}+ (-1)^n \sum_{i=0}^{n-3} (-x)^i$, is a Pisot number.

We prove that the numbers occurring as $B_-(\pi)$ are exactly the Yrrap numbers. 
The corresponding question for $B_+(\pi)$ is an open problem of Elizalde~\cite{Elizalde11}.

\begin{theorem} \label{t:Yrrap}
Let $\beta > 1$. 
We have $B_-(\pi) = \beta$ for some $\pi \in \mathcal{S}_n$, $n \ge 1$, if and only if $\beta$ is an Yrrap number. 
\end{theorem}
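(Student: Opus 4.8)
The statement is an equivalence, and I would prove the two implications separately, in both cases using Theorem~\ref{t:main} to pass between a base~$\beta$ and the sequence~$a$ it produces. The underlying fact I would rely on is that $\alpha \mapsto d_{-\alpha}(1)$ is a strictly increasing bijection from $(1,\infty)$ onto the admissible sequences, namely those~$w$ with $\widehat w = w$ and $w > u$, with inverse given by $w \mapsto b(w)$; this is contained in Theorem~3 and Corollary~1 of~\cite{Steiner13} together with Theorem~10 of~\cite{Ito-Sadahiro09}. In particular $d_{-b(w)}(1) = w$ for every admissible~$w$, and $b(d_{-\beta}(1)) = \beta$ for every $\beta > 1$.

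For the forward implication, suppose $B_-(\pi) = \beta > 1$. By Theorem~\ref{t:main} we have $\beta = b(a)$, and in each of the five cases of the formula the sequence~$a$ is eventually periodic. Since $b(a) = \beta > 1$, the definition of~$b$ forces $\widehat a = a$ and $a > u$, so $a$ is admissible and the bijection above gives $d_{-\beta}(1) = a$. As $a$ is eventually periodic, so is $d_{-\beta}(1)$, whence $\beta$ is an Yrrap number.

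For the backward implication, let $\beta > 1$ be an Yrrap number, so that $d_{-\beta}(1)$ is eventually periodic, admissible, and satisfies $b(d_{-\beta}(1)) = \beta$. By the forward analysis it suffices to produce a permutation~$\pi$ for which the sequence~$a$ of Theorem~\ref{t:main} equals $d_{-\beta}(1)$, since then $B_-(\pi) = b(a) = \beta$. I would build~$\pi$ by reverse-engineering the digits~$z_j$. Writing $d_{-\beta}(1)$ with preperiod of length~$q \ge 0$ and period of length~$p \ge 1$, I would set $n = p + q + 1$, $m = \pi^{-1}(n) = 1$, and choose the positions $\ell$ and~$r$ so that $z_{[m,n)}$ carries the preperiod and $z_{[\ell,n)}$ (resp.\ $z_{[r,n)}$, according to the parity of $n-m = p+q$) carries the period; here the identity $z_{[m,n)}\,\overline{z_{[\ell,n)}} = z_{[m,\ell)}\,\overline{z_{[\ell,n)}}$ for $\ell > m$ handles a genuine preperiod and $z_{[m,n)}\,\overline{z_{[\ell,n)}} = \overline{z_{[m,n)} z_{[\ell,m)}}$ for $\ell < m$ handles a purely periodic target. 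The permutation itself is specified by prescribing the relative alternating-lexicographic order of the shifts of a word read off from $d_{-\beta}(1)$, and one then checks that its successor-comparison digits~$z_j$ reproduce $d_{-\beta}(1)$ and that the intended branch of the formula returns exactly this sequence.

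The crux, and the main obstacle, is this explicit construction together with its verification. One has to choose the period/preperiod representation and the placement of $m$, $\ell$, $r$ so that the right branch of the piecewise definition of~$a$ is selected; I would in particular arrange for~$\pi$ to be non-collapsed, avoiding the minimum over~$i$ and the perturbed digits $z_j^{(i)}$, and I would separately dispose of the boundary cases $\pi(n)=1$ and $\pi(n)=n$, which fix the parity of $n-m$, as well as the parity constraint coming from the purely-periodic-odd-minimal-period clause in the Ito--Sadahiro admissibility criterion (which may force me to represent an odd period by its doubling). Verifying that the candidate word is admissible, that its shifts are pairwise distinct and ordered as prescribed, and hence that~$\pi$ is a well-defined permutation realising $d_{-\beta}(1)$, is where essentially all the combinatorial bookkeeping lives.
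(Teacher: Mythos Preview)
The forward implication is already part of Theorem~\ref{t:main}, which concludes that $B_-(\pi)$ is either~$1$ or an Yrrap number. Your argument for it rests on a false claim: the map $\alpha \mapsto d_{-\alpha}(1)$ is \emph{not} a bijection onto $\{w : \widehat w = w > u\}$ with inverse~$b$. When $d_{-\beta}(1) = \overline{v}$ is purely periodic with $v$ primitive, Lemma~\ref{l:ibeta} shows that every $w \in \{v,v'\}^\infty$ with $\widehat w = w$ lies in $W_{-\beta}$, so several such sequences have $b(w) = \beta$; in particular $d_{-b(a)}(1) = a$ can fail. The conclusion survives because $a \in W_{-b(a)}$ (Lemma~\ref{l:bw}) and the eventual periodicity of~$a$ forces that of $d_{-b(a)}(1)$ via Lemma~\ref{l:ibeta}, but your stated mechanism does not work as written.

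The backward implication has a genuine gap. With $n = p + q + 1$ and $m = 1$ you would need $z_1 = d_{-\beta,1}(1) = \lfloor\beta\rfloor$, but by definition $z_j$ counts certain indices below~$\pi(j)$ and hence $z_j \le n-2 = p+q-1$ for all~$j$. Already for integer $\beta \ge 2$ one has $d_{-\beta}(1) = \overline{\beta}$, so $p = 1$, $q = 0$ in your numbering, $n = 2$, and the only available digit is $z_1 = 0 \ne \beta$; your construction cannot produce the required value. The paper handles exactly this by padding: it defines auxiliary integers~$y_j$ encoding the digit jumps in $d_{-\beta}(1)$, prepends $c = \sum_j y_j$ extra positions to the permutation (so $n = c + p + q$), and arranges these positions so that $\tilde\pi$ acquires precisely the ascents needed to make $z_{c+j} = w_j$ for each~$j$. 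The non-collapsedness of the resulting~$\pi$ and the selection of the intended branch of~$a$ then follow from the minimality of $p$ and~$q$. Your sketch correctly targets $a = d_{-\beta}(1)$ and a non-collapsed branch, but the padding step is the combinatorial core of the construction and is absent from your plan.
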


Following~\cite{Elizalde11}, it is sometimes convenient to use the circular permutation
\[
\tilde{\pi} = \big(\pi(1) \pi(2) \cdots \pi(n)\big) \in \mathcal{S}_n,
\]
i.e., $\tilde{\pi}(\pi(j)) = \pi(j+1)$ for $1 \le j < n$, $\tilde{\pi}(\pi(n)) = \pi(1)$.
Thanks to this notation~$\tilde{\pi}$, we get another formulation of the digits $z_j$:
\begin{align*}
z_j = \# \{1 \le i < \pi(j):\ & i \ne \pi(n) \ne i+1,\ \tilde{\pi}(i) < \tilde{\pi}(i+1), \\ 
						& \mbox{or}\ i+1 = \pi(n) \ne n,\ \tilde{\pi}(i) < \tilde{\pi}(i+2)\}.
\end{align*}
In particular, $\max_{1\le i<n} z_i$ equals the number of ascents in~$\tilde{\pi}$ with $\pi(1)$ removed.

\section{Examples} \label{sec:examples}
In Table~\ref{ta:1}, we give the values of $B_-(\pi)$ for all permutations of length up to~$4$, and we compare them to the values of $B_+(\pi)$ obtained by~\cite{Elizalde11}. 
Note that more permutations satisfy $B_-(\pi) = 1$ than $B_+(\pi) = 1$; see also the list of open problems at the end of the paper. 
Some other examples are below.

\begin{table}[ht]
\begin{centering}
\begin{tabular}{c|c|c|c}
\hline
$\beta$ & root of & $\pi$ with $B_-(\pi) = \beta$ & $\pi$ with $B_+(\pi) = \beta$ \\
\hline
$1$ & $x - 1$ & $12, 21$ & $12, 21$ \\
& & $123, 132, 213, 231, 321$ & $123, 231, 312$ \\
& & $1324, 1342, 1432, 2134$ & $1234, 2341, 3412, 4123$ \\
& & $2143, 2314, 2431, 3142$ & \\
& & $3214, 3241, 3421, 4213$ & \\
\hline
$1.465$ & $x^3 - x^2 - 1$ & & $1342, 2413, 3124, 4231$ \\
\hline
$1.618$ & $x^2 - x - 1$ & $312$ & $132, 213, 321$ \\
& & $1423, 3412, 4231$ & $1243, 1324, 2431$ \\
& & & $3142, 4312$ \\
\hline
$1.755$ & $x^3 - 2x^2 + x - 1$ & $2341, 2413, 3124, 4123$ & \\
\hline
$1.802$ & $x^3 - 2x^2 - 2x + 1$ & & $4213$ \\
\hline
$1.839$ & $x^3 - x^2 - x - 1$ & $4132$ & $1432, 2143, 3214, 4321$ \\
\hline
$2$ & $x - 2$ & $1234$, $1243$, $4312$ & $2134, 3241$ \\
\hline
$2.247$ & $x^3 - 2x^2 - x + 1$ & $4321$ & $4132$ \\
\hline
$2.414$ & $x^2 - 2x - 1$ & & $2314, 3421$ \\
\hline
$2.618$ & $x^2 - 3x + 1$ & & $1423$ \\
\hline
\end{tabular}
\end{centering}
\caption{$B_-(\pi)$ and $B_+(\pi)$ for all permutations of length up to~4.}
\label{ta:1}
\end{table}

\begin{enumerate}
\item
Let $\pi=3421$. 
Then $n=4$, $\tilde{\pi} = \underline{3}142$, $z_{[1,4)} = 110$, $m=2$, $\pi(n) = 1$, $r = 3$. 
We obtain that $a = \overline{z_{[2,4)}0} = \overline{100} = \overline{\varphi^2(0)}$, thus $B_-(\pi) = b(a) = 1$. 
Indeed, for $w=1\, \overline{100}$, we have $\mathrm{Pat}(w, \Sigma, n) = \pi$ and $\widehat{w} =  a$.

\item 
Let $\pi = 892364157$. 
Then $n=9$, $\tilde{\pi} = 536174\underline{8}92$, $z_{[1,9)} = 33012102$, $m=2$, $\ell=5$, $r=1$, thus $a = z_{[2,9)}\, \overline{z_{[1,9)}} = \overline{30121023}$, and $b(a)$ is the root $x>1$ of 
\[
	x^8-4x^7+x^6-2x^5+3x^4-2x^3+x^2-3x+3=0.
\]
We get $B_-(\pi) \approx 3.831$, and for $w=330121023\,  \overline{301210220}$, we have $\mathrm{Pat}(w, \Sigma, n) = \pi$, $\widehat{w}=\overline{301210220}$ and $b(\widehat{w})=b(a)$.

\item
Let $\pi = 453261$. 
Then $n=6$, $\tilde{\pi} = \underline{4}62531$, $z_{[1,6)} = 11001$, $m=5$, $\pi(n)=1$, $r=4$, thus $a = z_5\, \overline{z_4 z_5} = \overline{10}$, $b(a) = 2$.
For $w=110010\, \overline{2}$, we have $\mathrm{Pat}(w, \Sigma, n) = \pi$, $\widehat{w}=\overline{2}$ and $b(\widehat{w})=b(a)$.

\item
Let $\pi = 7325416$.
Then $n=7$, $\tilde{\pi} = 65214\underline{7}3$, $z_{[1,7)} = 100100$, $m = r = 1$, $\ell = 4$.
Hence $\pi$ is collapsed, and $z^{(0)}_{[1,7)} = 200100$, $z^{(1)}_{[1,7)} = 200210$,  $z^{(2)}_{[1,7)} = 211210$.
Since $n-m$ is even, we have
\[
a = \min_{i\in\{0,1,2\}} z^{(i)}_{[1,7)}\, \overline{z^{(i)}_{[4,7)}} = \min\{200\, \overline{100}, 200\, 
\overline{210}, 211\, \overline{210}\} = 211\, \overline{210}.
\]
Therefore, $B_-(\pi) \approx 2.343$ is the largest positive root of the polynomial $x^6-3x^5+2x^4-x^3-1$.
We have $\mathrm{Pat}(211 (210)^{2k} \overline{2}, \Sigma, n) = \pi$ for all $k\ge 1$ and $\lim_{k\to\infty}b(211 (210)^{2k} \overline{2}) = b(a)$. 

\item 
Let $\tilde{\pi}=4312$. 
Then it can come from one of the following four permutations~$\pi$: $1423$, $3142$, $2314$, $4231$. 
When underlying $\pi(1)$ in $\tilde{\pi}$, one actually differentiates the four possible cases, which we study in detail hereafter.

\begin{itemize}
\item 
Let $\pi=1423$. Hence $\tilde{\pi} = 43\underline{1}2$. 
We find $m=r=2$ and $\ell=3$. 
Then $z_{[1,4)}=000$ and $\pi$ is collapsed.
We get $z^{(0)}_{[1,4)}=010$ and $a = z^{(0)}_{[2,4)}\, \overline{z^{(0)}_{[3,4)}} =1\, \overline{0}$, thus $B_-(\pi) = \frac{1+\sqrt{5}}{2}$. 
We have $\mathrm{Pat}(010^{2k+1}\overline{1}, \Sigma, n) = \pi$ for all $k \ge 0$ and $\lim_{k\to\infty}b(10^{2k+1}\overline{1}) = b(a)$.

\item 
Let $\pi=3142$. Hence $\tilde{\pi} = 4\underline{3}12$. 
We find $m=3$, $r=1$ and $\ell=2$. Then $z_{[1,4)}=001$. 
We get $a = z_{[3,4)}\, \overline{z_{[1,4)}} = \overline{100} = \overline{\varphi^2(0)}$. 
Thus $B_-(\pi)=1$.  
For $w = 00\,\overline{10011}$, we have $\mathrm{Pat}(w, \Sigma, n) =\pi$ and $b(\widehat{w}) = b(\overline{10011}) = 1$. 

\item 
Let $\pi=2314$. Hence $\tilde{\pi}=431\underline{2}$. 
We find $m=4$, $r$ is not defined and $\ell=2$. Then $z_{[1,4)}=000$. 
We get $a = z_{[4,4)}\, \overline{z_{[2,4)}} = \overline{0}$.
Thus $B_-(\pi)=1$.  
We have $\mathrm{Pat}(000\, \overline{1}, \Sigma, n) = \pi$ and $b(\overline{1}) = 1$. 

\item 
Let $\pi=4231$. Hence $\tilde{\pi}=\underline{4}312$. 
We find $m=1$, $r=2$ and $\ell$ is not defined. Then $z_{[1,4)}=100$. 
We get $a = z_{[1,4)}\, \overline{z_{[2,4)}} = 1\, \overline{0}$, $B_-(\pi)=\frac{1+\sqrt{5}}{2}$.
We have $\mathrm{Pat}(10^{2k+1} \overline{1}, \Sigma, n) = \pi$ for $k\ge 1$, $\lim_{k\to\infty}b(10^{2k+1}\overline{1}) = b(a)$. 
\end{itemize}
\end{enumerate}

\section{Permutation patterns of integer sequences}
We first establish a relation between $z_{[1,n)}$ and $w_{[1,n)}$ for sequences $w \in \mathbb{N}^\infty$ satisfying $\mathrm{Pat}(w, \Sigma, n) = \pi$. 
Note $w\in \mathbb{N}^\infty$ realizes the permutation $\pi\in\mathcal{S}_n$, i.e.  
$\mathrm{Pat}(w,\Sigma,n)=\pi$, if and only if 
$w_{[\pi^{-1}(k),\infty)}<w_{[\pi^{-1}(k+1),\infty)}$ for all $1\le k< n$:
\[
	w_{[\pi^{-1}(1),\infty)}<w_{[\pi^{-1}(2),\infty)}<\ldots<w_{[\pi^{-1}(n),\infty)}.
\]

\begin{proposition} \label{p:pat}
Let $w = w_1w_2 \cdots \in \mathbb{N}^\infty$ and $\pi\in\mathcal{S}_n$.
Then $\mathrm{Pat}(w, \Sigma, n) = \pi$ if and only if the following conditions (\ref{i:1})--(\ref{i:3}) hold. 
\renewcommand{\theenumi}{\roman{enumi}}
\begin{enumerate}
\item \label{i:1}
$w_j - w_i \ge z_j - z_i$ for all $1 \le i, j < n$ with $\pi(j) > \pi(i)$, \\ in particular $w_j \ge z_j$ for all $1 \le j < n$,
\item \label{i:2}
$w_{[n,\infty)} > w_{[\ell,\infty)}$ if $\pi(n) \ne 1$,
\item \label{i:3}
$w_{[n,\infty)} < w_{[r,\infty)}$ if $\pi(n) \ne n$.
\end{enumerate}
\end{proposition}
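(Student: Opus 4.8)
The plan is to work entirely with the reformulation stated just before the proposition: $\mathrm{Pat}(w,\Sigma,n)=\pi$ holds if and only if the $n$ shifted sequences are totally ordered as $w_{[\pi^{-1}(1),\infty)}<w_{[\pi^{-1}(2),\infty)}<\cdots<w_{[\pi^{-1}(n),\infty)}$ for the alternating order, equivalently if and only if each \emph{consecutive} inequality $w_{[\pi^{-1}(k),\infty)}<w_{[\pi^{-1}(k+1),\infty)}$ holds for $1\le k<n$. The basic tool is the single-step comparison rule for the alternating order: for positions $a,b$ one has $w_{[a,\infty)}<w_{[b,\infty)}$ precisely when $w_a<w_b$, or when $w_a=w_b$ and $w_{[b+1,\infty)}<w_{[a+1,\infty)}$ (the reversal coming from the even index in the alternating comparison). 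In particular, whenever $\pi(a)<\pi(b)$ the leading digits cannot strictly decrease, so along the chain $\pi^{-1}(1),\ldots,\pi^{-1}(n)$ the leading digits are weakly increasing; this monotonicity underlies the inequality $w_j\ge z_j$ in~(\ref{i:1}).

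For a consecutive pair at value $k$, write $a=\pi^{-1}(k)$, $b=\pi^{-1}(k+1)$. If $a,b\ne n$, then $a+1,b+1\le n$ and the tail comparison of $w_{[a+1,\infty)}$ and $w_{[b+1,\infty)}$ is again governed by the ordering of positions $a+1,b+1$, i.e.\ by whether $\pi(a+1)<\pi(b+1)$; translating through $\tilde{\pi}$, this is exactly the ascent/descent of $\tilde{\pi}$ at~$k$ recorded by the digits~$z_j$. The two pairs straddling position~$n$, namely $(\ell,n)$ and $(n,r)$, are special because the tail $w_{[n+1,\infty)}$ is not among the ordered sequences and is therefore free; these inequalities are isolated as~(\ref{i:2}) and~(\ref{i:3}). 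The key observation is that $w_n$ (together with its free tail) can be placed strictly between $w_{[\ell,\infty)}$ and $w_{[r,\infty)}$ without increasing the leading digit exactly when $w_{[r+1,\infty)}<w_{[\ell+1,\infty)}$, i.e.\ when $\pi(\ell+1)>\pi(r+1)$; the opposite inequality $\pi(\ell+1)<\pi(r+1)$ forces one extra increment across the whole segment and is precisely the special clause ``$i+1=\pi(n)\ne n$, $\pi(\ell+1)<\pi(r+1)$'' in the definition of the~$z_j$, which explains why a single increment, not two, is charged to the block $\ell,n,r$.

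With these observations the forward implication is a telescoping argument. Assuming the full ordering, each interior ascent forces $w_{\pi^{-1}(k+1)}\ge w_{\pi^{-1}(k)}+1$ (equality of the leading digits would, via the comparison rule, contradict $\pi(a+1)<\pi(b+1)$), while the block $\ell,n,r$ forces $w_r\ge w_\ell+1$ exactly under the special clause. Summing these minimal increments over the value range $[\pi(i),\pi(j))$ and grouping the two straddle transitions together—legitimate because an interior pair, never having an endpoint at position~$n$, contains either both or neither of them—yields $w_j-w_i\ge z_j-z_i$, that is~(\ref{i:1}); conditions~(\ref{i:2}) and~(\ref{i:3}) are themselves two of the consecutive inequalities. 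The degenerate cases $\pi(n)=1$ and $\pi(n)=n$, where $\ell$ or $r$ is undefined and the corresponding transition never forces an increment, match the conventions built into the definition of the~$z_j$ and are checked separately.

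The converse is the delicate part, and I expect it to be the main obstacle. Assuming (\ref{i:1})--(\ref{i:3}), I would verify each consecutive inequality $w_{[a,\infty)}<w_{[b,\infty)}$ with $a=\pi^{-1}(k)$, $b=\pi^{-1}(k+1)$. When $a$ or $b$ equals~$n$ this is literally~(\ref{i:2}) or~(\ref{i:3}); otherwise (\ref{i:1}) gives $w_b\ge w_a+(z_b-z_a)\ge w_a$, so the claim is immediate once the leading digits differ, and the only remaining case is $w_a=w_b$ with $z_a=z_b$, where one must show $w_{[b+1,\infty)}<w_{[a+1,\infty)}$. Here I would set up a recursion $a\mapsto a+1$, $b\mapsto b+1$ that peels off equal leading digits; it terminates either when the leading digits finally differ—in which case the sign is pinned down by~(\ref{i:1}) through the $\tilde{\pi}$-ascent/$z$-increment dictionary above—or when one position reaches~$n$, at which point~(\ref{i:2}),~(\ref{i:3}) together with already-established interior orderings used transitively (e.g.\ $w_{[b+1,\infty)}<w_{[\ell,\infty)}<w_{[n,\infty)}$ when $a+1=n$) close the argument. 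The crux is to organize this as a genuinely well-founded induction—arranging that comparisons reaching position~$n$ are grounded directly in~(\ref{i:2})/(\ref{i:3}) rather than recursed further—and to check that the ascent/descent bookkeeping, including the single charged increment across the block $\ell,n,r$, stays consistent at every step; the distinctness of the shifts, built into the definition of $\mathrm{Pat}$, guarantees that the peeling terminates.
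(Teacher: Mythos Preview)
Your forward direction is essentially the paper's: both telescope along the chain $\pi^{-1}(1),\ldots,\pi^{-1}(n)$, use the one-step comparison rule to see that each ``interior'' ascent forces a unit increment in the leading digit, treat the two transitions through~$\pi(n)$ jointly as contributing $w_r-w_\ell\ge z_r-z_\ell$, and sum. Conditions~(\ref{i:2}) and~(\ref{i:3}) are immediate, and this part is fine.

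The converse has a genuine gap exactly at the point you yourself call ``the crux''. Your peeling recursion on a pair $(a,b)$ with $a,b<n$ does reach position~$n$ in finitely many steps; the problem is what happens then. Suppose the recursion lands on a comparison of $w_{[i,\infty)}$ with $w_{[n,\infty)}$, where (say) $\pi(i)<\pi(n)$. You propose to conclude via $w_{[i,\infty)}<w_{[\ell,\infty)}<w_{[n,\infty)}$, using~(\ref{i:2}) for the second step. But the first step is itself a comparison of two positions $i,\ell<n$, and your method handles it only by launching a \emph{new} peeling recursion, which again runs into~$n$, producing yet another comparison of some $w_{[j,\infty)}$ with $w_{[n,\infty)}$, and so on. No decreasing quantity has been identified, so the argument as written is circular. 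Your appeal to ``distinctness of the shifts'' does not help: in the converse direction distinctness is part of what must be proved, and in any case it controls termination of a single peeling, not of this chain of restarts.

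The paper closes the gap by first isolating the claim
\[
\pi(i)<\pi(n)\ \Rightarrow\ w_{[i,\infty)}<w_{[n,\infty)},\qquad \pi(i)>\pi(n)\ \Rightarrow\ w_{[i,\infty)}>w_{[n,\infty)},
\]
for all $1\le i<n$, and proving it by a minimality argument: assume a counterexample and take the \emph{smallest length}~$k$ for which some prefix comparison $w_{[i,i+k)}$ versus $w_{[n,n+k)}$ already has the wrong sign; then the peeling lemma (your recursion, formalized as Lemma~\ref{l:running-argument}) applied to the pair $(i,\ell)$ or $(i,r)$ manufactures another bad pair with strictly smaller length, a contradiction. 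The well-founded measure is the length of the distinguishing prefix, not any position or value index. Once this claim is in hand, every comparison $w_{[i,\infty)}$ vs.\ $w_{[j,\infty)}$ with $i<j\le n$ is settled in a single pass: peel until position~$n$ is reached, then invoke the claim. This minimality-of-$k$ idea is what your proposal is missing.
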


In the proof of Proposition~\ref{p:pat}, we use the following two lemmas. 

\begin{lemma}\label{running-argument-0}
Let $1\le i,j <n$. If $\pi(i)<\pi(j)$ then $z_i\le z_j$, and $z_i = z_j$ implies that $\pi(i+1)>\pi(j+1)$.
\end{lemma}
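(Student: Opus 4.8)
The plan is to observe that $z_j$ depends on the index $j$ only through the value $\pi(j)$. Rewriting the defining set with a dummy variable $t$ in place of the bound index, we have $z_j = f(\pi(j))$, where
\[
f(v) = \#\{1 \le t < v : P(t)\}
\]
and, using the reformulation of the digits via the circular permutation $\tilde{\pi}$, $P(t)$ is the predicate that holds precisely when either $t \ne \pi(n) \ne t+1$ and $\tilde{\pi}(t) < \tilde{\pi}(t+1)$, or $t+1 = \pi(n) \ne n$ and $\tilde{\pi}(t) < \tilde{\pi}(t+2)$. Since $f$ counts occurrences of a fixed predicate over the growing initial segment $[1,v)$, it is non-decreasing in $v$. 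Thus $\pi(i) < \pi(j)$ gives at once $z_i = f(\pi(i)) \le f(\pi(j)) = z_j$, which is the first assertion.

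For the second assertion I assume $\pi(i) < \pi(j)$ together with $z_i = z_j$, that is, $f(\pi(i)) = f(\pi(j))$. Because $f$ is a counting function, this equality forces $P(t)$ to be \emph{false} for every integer $t$ with $\pi(i) \le t < \pi(j)$. I then convert each such failure into a weak inequality between values of $\tilde{\pi}$ and chain them. List the integers from $\pi(i)$ to $\pi(j)$, deleting $\pi(n)$ if it happens to lie strictly between them; note $\pi(i) \ne \pi(n) \ne \pi(j)$ because $i,j<n$. For a retained consecutive pair $s, s+1$ with both distinct from $\pi(n)$, falsity of $P(s)$ through the first defining condition yields $\tilde{\pi}(s) \ge \tilde{\pi}(s+1)$; for the unique pair straddling $\pi(n)$, namely $s = \pi(n)-1$ followed by $\pi(n)+1$, falsity of $P(\pi(n)-1)$ through the second defining condition yields $\tilde{\pi}(\pi(n)-1) \ge \tilde{\pi}(\pi(n)+1)$. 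Telescoping along the list gives $\tilde{\pi}(\pi(i)) \ge \tilde{\pi}(\pi(j))$, i.e.\ $\pi(i+1) \ge \pi(j+1)$ by the relation $\tilde{\pi}(\pi(s)) = \pi(s+1)$, valid since $i,j<n$. Finally, $\pi(i) \ne \pi(j)$ and injectivity of the permutation $\tilde{\pi}$ give $\pi(i+1) \ne \pi(j+1)$, promoting the inequality to the required $\pi(i+1) > \pi(j+1)$.

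The step demanding the most care is the bookkeeping around the distinguished value $\pi(n)$, where the counted ``ascent'' skips a position and compares $\tilde{\pi}(\pi(n)-1)$ with $\tilde{\pi}(\pi(n)+1)$ rather than with $\tilde{\pi}(\pi(n))$. I must verify that the straddling inequality is available exactly when it is needed: if $\pi(n)$ lies strictly between $\pi(i)$ and $\pi(j)$, then $\pi(n) < \pi(j) \le n$ forces $\pi(n) \ne n$, so the clause $t+1 = \pi(n) \ne n$ is active at $t = \pi(n)-1$ and the second defining condition does supply $\tilde{\pi}(\pi(n)-1) \ge \tilde{\pi}(\pi(n)+1)$. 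I also have to confirm that no link of the chain ever compares against $\tilde{\pi}(\pi(n))$ directly, which is precisely why $\pi(n)$ is omitted from the list rather than included; with that caveat the argument is a routine telescoping.
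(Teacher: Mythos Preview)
Your proof is correct and follows essentially the same approach as the paper's: both observe that $z_j$ is a counting function of $\pi(j)$, and both derive the descent chain $\tilde{\pi}(\pi(i)) > \cdots > \tilde{\pi}(\pi(j))$ (equivalently $\pi(\pi^{-1}(k)+1) > \pi(\pi^{-1}(k+1)+1)$) from the failure of the counting predicate on each $t \in [\pi(i),\pi(j))$, with the special straddling comparison at $\pi(n)$. The only differences are cosmetic---you work with the $\tilde{\pi}$ reformulation throughout, you spell out why $\pi(n) \ne n$ is forced when the straddling clause is needed, and you promote the chained weak inequality to a strict one via injectivity of $\tilde{\pi}$, whereas the paper writes the strict inequalities directly.
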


\begin{proof}
Let $1\le i,j <n$ with $\pi(i)<\pi(j)$.
Then $z_i\le z_j$ is clear from the definition of the digits~$z_j$.
Moreover, $z_i = z_j$ implies that $\pi(\pi^{-1}(k)+1) > \pi(\pi^{-1}(k+1)+1)$ for all $k \notin \{\pi(n)-1,\pi(n)\}$ with $\pi(i) \le k < \pi(j)$, as well as $\pi(\pi^{-1}(\pi(n)-1)+1) > \pi(\pi^{-1}(\pi(n)+1)+1)$ if $\pi(i) < \pi(n) < \pi(j)$. 
This gives that $\pi(i+1) > \pi(j+1)$. 
\end{proof}

\begin{lemma} \label{l:running-argument}
Assume that $w_{[1,n)}$ satisfies point~(\ref{i:1}) of Proposition~\ref{p:pat}, let $1 \le i < j< n$.

If $\pi(i) < \pi(j)$, then $w_{[i,i+n-j)} \le w_{[j,n)}$, with $(-1)^{n-j} \pi(i+n-j) < (-1)^{n-j} \pi(n)$ in case of equality.

If $\pi(i) > \pi(j)$, then $w_{[i,i+n-j)} \ge w_{[j,n)}$, with $(-1)^{n-j} \pi(i+n-j) > (-1)^{n-j} \pi(n)$ in case of equality.
\end{lemma}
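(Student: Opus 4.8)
The plan is to prove both statements simultaneously by a running argument realized as induction on the common length $n-j$ of the two words $w_{[i,i+n-j)}$ and $w_{[j,n)}$; the two cases $\pi(i)<\pi(j)$ and $\pi(i)>\pi(j)$ feed into each other, so they must be handled together. The engine I would record first is this consequence of point~(\ref{i:1}) and Lemma~\ref{running-argument-0}: if $\pi(i)<\pi(j)$ then $w_j-w_i\ge z_j-z_i\ge0$, hence $w_i\le w_j$; symmetrically $\pi(i)>\pi(j)$ forces $w_i\ge w_j$. Moreover, in the borderline case $w_i=w_j$ the inequality $0\ge z_j-z_i$ combines with $z_i\le z_j$ to force $z_i=z_j$, and then the second half of Lemma~\ref{running-argument-0} flips the order of the successors, giving $\pi(i+1)>\pi(j+1)$ (resp.\ $\pi(i+1)<\pi(j+1)$). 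This \emph{flip} is what drives the recursion.

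For the base case $n-j=1$ (so $j=n-1$) the words are the single letters $w_i$ and $w_{n-1}$, compared at the odd position~$1$. Point~(\ref{i:1}) gives $w_i\le w_{n-1}$ when $\pi(i)<\pi(j)$ and $w_i\ge w_{n-1}$ when $\pi(i)>\pi(j)$, which is the claimed (weak) inequality at an odd position. In case of equality the flip yields $\pi(i+1)>\pi(n)$ in the first case and $\pi(i+1)<\pi(n)$ in the second, and since $i{+}n{-}j=i{+}1$ and $(-1)^{n-j}=-1$, this is exactly the asserted condition $(-1)^{n-j}\pi(i{+}n{-}j)<(-1)^{n-j}\pi(n)$ (resp.\ $>$).

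For the inductive step ($n-j\ge2$) I would compare the leading letters $w_i$ and $w_j$. If they differ, the sign forced above together with the convention of the alternating order at the odd first position gives the \emph{strict} inequality in the required direction, and there is nothing to verify. If $w_i=w_j$, the comparison of the two words reduces to the comparison of their tails $w_{[i+1,i+n-j)}$ and $w_{[j+1,n)}$, which are exactly the instance of the lemma for $i'=i+1$, $j'=j+1$, of length $n-j'=n-j-1$. The structural fact I invoke here is that prepending a common letter reverses the alternating order: for words of equal length with the same first letter, $v<w$ holds if and only if the tail of $v$ exceeds the tail of $w$. Since $w_i=w_j$ flips the $\pi$-relation, the induction hypothesis applies in the \emph{other} case and delivers the opposite inequality between the tails, which after the reversal is the desired inequality between the original words. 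The equality clauses match because $n-j'=n-j-1$ negates the sign $(-1)^{n-j}$ while $i'{+}n{-}j'=i{+}n{-}j$ is unchanged, so multiplying the hypothesis's condition by $-1$ turns $(-1)^{n-j'}\pi(i'{+}n{-}j')>(-1)^{n-j'}\pi(n)$ into precisely $(-1)^{n-j}\pi(i{+}n{-}j)<(-1)^{n-j}\pi(n)$ (and the reverse in the other case).

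The main obstacle is not conceptual but bookkeeping: one must keep the two cases, the parity of the current position, the order-reversal under shifting, and the sign $(-1)^{n-j}$ in the equality clause mutually consistent. Once the flip from Lemma~\ref{running-argument-0} and the order-reversal identity are isolated as stated above, the induction is routine, and it is precisely the simultaneous treatment of $\pi(i)<\pi(j)$ and $\pi(i)>\pi(j)$ that lets the recursion close.
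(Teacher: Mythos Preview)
Your argument is correct and follows the same route as the paper: one observes from point~(\ref{i:1}) and Lemma~\ref{running-argument-0} that $\pi(i)<\pi(j)$ forces $w_i\le w_j$, with the successor order flipping in case of equality, and then runs a recursion along the two words. The paper compresses the entire recursion into the single word ``Recursively'', whereas you spell out the induction on $n-j$ and the parity bookkeeping; your version is more detailed but not a different method.
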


\begin{proof}
If $\pi(i) < \pi(j)$, then we have $z_i \le z_j$, thus $w_i \le w_j$, and equality implies that $\pi(i+1) > \pi(j+1)$ by Lemma~\ref{running-argument-0}.
Similarly, $\pi(i) > \pi(j)$ implies that $w_i > w_j$ or $w_i = w_j$, $\pi(i+1) < \pi(j+1)$.
Recursively, we get the statement of the lemma.
\end{proof}

\begin{proof}[Proof of Proposition~\ref{p:pat}]
Assume first that $\mathrm{Pat}(w, \Sigma, n) = \pi$. 
Then (\ref{i:2}) and (\ref{i:3}) hold immediately.
For $1 \le i, j < n$ with $\pi(j) > \pi(i)$, we use the telescoping sum
\[
z_j - z_i = \sum_{\substack{\pi(i) \le k < \pi(j), \\ k \notin \{\pi(n)-1,\pi(n)\}}} 	
                \big(z_{\pi^{-1}(k+1)} - z_{\pi^{-1}(k)}\big) 
		+	\begin{cases}
		 	z_r - z_\ell	& \mbox{if } \pi(i) < \pi(n) < \pi(j), \\ 
		 	0		 	& \mbox{otherwise}.
			\end{cases}
\]
This relation is also valid with $z$ replaced by~$w$.
To prove that $w_j-w_i \ge z_j-z_i$, it is therefore sufficient to show that $w_r - w_\ell \ge z_r - z_\ell$ if $\pi(n) \notin \{1,n\}$, and
\begin{equation} \label{e:wz}
w_{\pi^{-1}(k+1)} - w_{\pi^{-1}(k)} \ge z_{\pi^{-1}(k+1)} - z_{\pi^{-1}(k)}
\end{equation}
for all $1 \le k < n$,  with $k \notin \{\pi(n)-1,\pi(n)\}$.
Since $w_{[\pi^{-1}(k+1),\infty)} > w_{[\pi^{-1}(k),\infty)}$, we have $w_{\pi^{-1}(k+1)} > w_{\pi^{-1}(k)}$ or $w_{\pi^{-1}(k+1)} = w_{\pi^{-1}(k)}$, $w_{[\pi^{-1}(k+1)+1,\infty)} < w_{[\pi^{-1}(k)+1,\infty)}$.
The latter inequality means that $\pi(\pi^{-1}(k+1)+1) < \pi(\pi^{-1}(k)+1)$, i.e., $z_{\pi^{-1}(k+1)} = z_{\pi^{-1}(k)}$.
As $z_{\pi^{-1}(k+1)} - z_{\pi^{-1}(k)} \in \{0,1\}$, this proves~\eqref{e:wz}.
Similarly, if $\pi(n) \notin \{1,n\}$, then $w_{[\pi^{-1}(\pi(n)+1),\infty)} > w_{[\pi^{-1}(\pi(n)-1),\infty)}$ implies that $w_r - w_\ell \ge 1 \ge z_r - z_\ell$ 
or $w_r = w_\ell$, $w_{[r+1,\infty)} < w_{[\ell+1,\infty)}$; in the latter case, we have $\pi(r+1) < \pi(\ell+1)$ and thus $z_r = z_\ell$. 
The inequality $w_j \ge z_j$ follows by setting $i = \pi^{-1}(1)$, as $z_i = 0$ and thus $w_j \ge z_j + w_i \ge z_j$ in this case.
This proves (\ref{i:1}). 

\smallskip
Conversely, assume in the following that conditions (\ref{i:1})--(\ref{i:3}) hold. 
We claim that 
\begin{equation} \label{e:win}
\begin{cases}
w_{[i,\infty)} < w_{[n,\infty)} & \mbox{for all $i$ with $\pi(i) < \pi(n)$}, \\
w_{[i,\infty)} > w_{[n,\infty)} & \mbox{for all $i$ with $\pi(i) > \pi(n)$}.
\end{cases}
\end{equation}

Suppose that the claim is false. 
If $w_{[i,\infty)} \ge w_{[n,\infty)}$ for some~$i$ with $\pi(i) < \pi(n)$, then we have $\pi(n) \ne 1$, $w_{[i,\infty)} > w_{[\ell,\infty)}$ by~(\ref{i:2}), and $\pi(i) < \pi(\ell) = \pi(n)-1$.
For~$i < \ell$, Lemma~\ref{l:running-argument} gives that $w_{[i,i+n-\ell)} \le w_{[\ell,n)} \le w_{[i,i+n-\ell)}$, thus $w_{[i,i+n-\ell)} = w_{[\ell,n)}$ and
\[
\begin{cases}
\pi(i+n-\ell) < \pi(n),\ w_{[i+n-\ell,\infty)} > w_{[n,\infty)}
			& \mbox{if $n-\ell$ is even}, \\ 
\pi(i+n-\ell) > \pi(n),\ w_{[i+n-\ell,\infty)} < w_{[n,\infty)} 		
			& \mbox{if $n-\ell$ is odd}.
\end{cases}
\]
Moreover, if $w_{[i,i+k)} > w_{[n,n+k)} (\ge w_{[\ell,\ell+k)})$, then $k>n-\ell$, $w_{[i+n-\ell,i+k)} > w_{[n,\ell+k)}$ if $n-\ell$ is even, $w_{[i+n-\ell,i+k)} < w_{[n,\ell+k)}$  if $n-\ell$ is odd. 
For $i > \ell$, we obtain in the same way that $w_{[i,n)} = w_{[\ell,\ell+n-i)}$ and
\[
\begin{cases}
\pi(n) < \pi(\ell+n-i),\ w_{[n,\infty)} > w_{[\ell+n-i,\infty)}	
			& \mbox{if $n-i$ is even}, \\ 
\pi(n) > \pi(\ell+n-i),\ w_{[n,\infty)} < w_{[\ell+n-i,\infty)}	
			& \mbox{if $n-i$ is odd}.
\end{cases}
\]
Here, $w_{[i,i+k)} > w_{[n,n+k)} (\ge w_{[\ell,\ell+k)})$ implies that $k>n-i$, $w_{[n,i+k)} > w_{[\ell+n-i,\ell+k)}$ if $n-i$ is even,  $w_{[n,i+k)} < w_{[\ell+n-i,\ell+k)}$  if $n-i$ is odd. 
If $w_{[i,\infty)} \le w_{[n,\infty)}$ for some~$i$ with $\pi(i) > \pi(n)$, then we get the opposite inequalities, with $\ell$ replaced by~$r$.
In particular, we have some~$i$ such that 
\begin{equation} \label{e:win2}
w_{[i,\infty)} > w_{[n,\infty)},\ \pi(i) < \pi(n), \quad \mbox{or} \quad w_{[i,\infty)} < w_{[n,\infty)},\ \pi(i) > \pi(n).
\end{equation}
Let $k \ge 1$ be minimal such that there is some $i$ with
\begin{equation} \label{e:wink}
w_{[i,i+k)} > w_{[n,n+k)},\ \pi(i) < \pi(n), \quad \mbox{or} \quad w_{[i,i+k)}  < w_{[n,n+k)},\ \pi(i) > \pi(n).
\end{equation}
Then the above arguments give 
\begin{equation} \label{e:wink2}
w_{[j,j+h)} > w_{[n,n+h)},\ \pi(j) < \pi(n), \quad \mbox{or} \quad w_{[j,j+h)} < w_{[n,n+h)},\ \pi(j) > \pi(n),
\end{equation}
for some $j \in \{n-|i-\ell|, n-|i-r|\}$, $h \in \{k-n+\ell, k-n+i, k-n+r\}$, contradicting the minimality of~$k$. 
Hence \eqref{e:win} holds. 

For $1 \le i < j \le n$ with $\pi(i) < \pi(j)$, we obtain that
\[
w_{[i,\infty)} = w_{[i,i+n-j)} w_{[i+n-j,\infty)} < w_{[j,n)} w_{[n,\infty)} = w_{[j,\infty)},
\]
as $w_{[i,i+n-j)} < w_{[j,n)}$ or $w_{[i,i+n-j)} = w_{[j,n)}$, $(-1)^{n-j} \pi(i+n-j) < (-1)^{n-j} \pi(n)$, by Lemma~\ref{l:running-argument}, and the latter inequality implies $w_{[i+n-j,\infty)}<w_{[n,\infty)}$ if $n-j$ is even, $w_{[i+n-j,\infty)} > w_{[n,\infty)}$ if $n-j$ is odd, by~\eqref{e:win}.
Similarly, $w_{[i,\infty)} > w_{[j,\infty)}$ holds for $1 \le i < j \le n$ with $\pi(i) > \pi(j)$. 
Hence $\mathrm{Pat}(w, \Sigma, n) = \pi$.
\end{proof}

\begin{remark} \label{r:pat}
If $\pi(n) \ne 1$, then $w_{[n,\infty)} > w_{[\ell,\infty)}$ is equivalent to $w_{[n,\infty)} > \overline{w_{[\ell,n)}}$.
Indeed, suppose that $w_{[\ell,\infty)} <w_{[n,\infty)} \le \overline{w_{[\ell,n)}}$ or  $\overline{w_{[\ell,n)}} < w_{[n,\infty)} \le w_{[\ell,\infty)}$.
Then $w_{[\ell,n)} = w_{[n,2n-\ell)} = w_{[2n-\ell,3n-2\ell)} = \cdots$, hence $w_{[\ell,\infty)} = \overline{w_{[\ell,n)}}$, a contradiction.
Similarly, $w_{[n,\infty)} < w_{[r,\infty)}$ is equivalent to $w_{[n,\infty)} < \overline{w_{[r,n)}}$ if $\pi(n) \ne n$.
Hence we can replace $w_{[\ell,\infty)}$ by $\overline{w_{[\ell,n)}}$ in Proposition~\ref{p:pat}~(\ref{i:2}), $w_{[r,\infty)}$ by $\overline{w_{[r,n)}}$ in Proposition~\ref{p:pat}~(\ref{i:3}).
\end{remark}
 
If $\pi$ is collapsed, then we have to increase some digits of $z_{[1,n)}$ to obtain a sequence $w \in \mathbb{N}^\infty$ with $\mathrm{Pat}(w, \Sigma, n) = \pi$.

\begin{lemma} \label{l:zi}
Let $\pi \in \mathcal{S}_n$ be collapsed, $w = w_1w_2 \cdots \in \mathbb{N}^\infty$ such that $\mathrm{Pat}(w, \Sigma, n)\, {=}\, \pi$. 
Then $w_m \ge z_m + 1$, with equality if and only if $w_{[1,n)} = z^{(i)}_{[1,n)}$ for some $0 \le i < |r-\ell|$.
\end{lemma}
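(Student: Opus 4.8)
The plan is to work throughout with the differences $e_j = w_j - z_j$ and read off the shape of $w_{[1,n)}$ from Proposition~\ref{p:pat}. Condition~(\ref{i:1}) says precisely that $e_j \ge e_i$ whenever $\pi(j) > \pi(i)$, together with $e_j \ge 0$; since $\pi(m) = n$ is maximal, $c := e_m = \max_{1\le j<n} e_j$ and $e$ is a nondecreasing function of the rank $\pi(j)$. In particular, when $c \in \{0,1\}$ the word $w_{[1,n)}$ is determined by a threshold: $c=0$ forces $w_{[1,n)} = z_{[1,n)}$, and $c=1$ forces $w_j = z_j + 1$ exactly on $\{j<n:\ \pi(j) \ge t\}$ for some $t \le \pi(m) = n$. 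Using Remark~\ref{r:pat} I would rewrite conditions (\ref{i:2}) and (\ref{i:3}) as $\overline{w_{[\ell,n)}} < w_{[n,\infty)} < \overline{w_{[r,n)}}$, so that every admissible $w$ forces the strict inequality $\overline{w_{[\ell,n)}} < \overline{w_{[r,n)}}$ between two purely periodic words. (Collapsedness guarantees $\pi(n) \notin \{1,n\}$, so $\ell$ and $r$ are defined.)

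For the bound $w_m \ge z_m + 1$ I would argue by contradiction. If $c=0$ then $w_{[1,n)} = z_{[1,n)}$, and writing the collapsed hypothesis without loss of generality as $z_{[\ell,n)} = z_{[r,n)}z_{[r,n)}$, doubling the period leaves the periodic word unchanged, so $\overline{w_{[\ell,n)}} = \overline{z_{[\ell,n)}} = \overline{z_{[r,n)}} = \overline{w_{[r,n)}}$, contradicting the strict inequality above. The symmetric case $z_{[r,n)} = z_{[\ell,n)}z_{[\ell,n)}$ is identical, so $c \ge 1$.

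For the equality case $c=1$ I would first extract the combinatorial backbone of a collapsed permutation. With $q = |r-\ell|$ the length of the shorter of $z_{[\ell,n)}$, $z_{[r,n)}$, one has $z_{\ell+s} = z_{r+s}$ for $0 \le s < q$, so Lemma~\ref{running-argument-0} applied to each pair $(\ell+s,r+s)$ shows that the sign of $\pi(\ell+s)-\pi(r+s)$ strictly alternates in $s$; since $\pi(\ell) = \pi(n)-1 < \pi(n)+1 = \pi(r)$, it equals $(-1)^{s+1}$. Because the first $q$ positions of $\overline{w_{[\ell,n)}}$ and $\overline{w_{[r,n)}}$ read $w_{\ell+s}$ against $w_{r+s}$ while later positions automatically agree, the differences occur only among these pairs; the strict inequality $\overline{w_{[\ell,n)}} < \overline{w_{[r,n)}}$ therefore forces a first differing index $s^*$, and the sign alternation makes the alternating-lexicographic comparison come out correctly of its own accord, with the relevant boundary value being $\max(\pi(\ell+s^*),\pi(r+s^*))$ — exactly the threshold defining $z^{(s^*)}$ (using $\pi(r+i)$ for $i$ even, $\pi(\ell+i)$ for $i$ odd). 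It then remains to show that the cut $\{j<n:\ \pi(j)\ge t\}$ coincides with $\{j<n:\ \pi(j)\ge \max(\pi(\ell+s^*),\pi(r+s^*))\}$, i.e.\ that $w_{[1,n)} = z^{(s^*)}_{[1,n)}$; the converse implication is immediate, since $\pi(m)=n$ gives $z^{(i)}_m = z_m+1$ for every~$i$.

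I expect this last step to be the main obstacle: pinning the threshold exactly, which amounts to proving that no value $\pi(j)$ with $1 \le j < n$ lies strictly between $\pi(\ell+s)$ and $\pi(r+s)$, for each $s < q$. If such an intermediate value existed at a position outside the pairs, one could vary $t$ to obtain a realizable $w$ with $w_m = z_m+1$ but $w_{[1,n)} \ne z^{(i)}_{[1,n)}$, which would break the lemma. The point is that the collapsed relation $z_{\ell+s} = z_{r+s}$ means no digit of $z$ increases across these intervals, and the only value the pairs leave uncovered inside the first interval is $\pi(n)$ itself, which sits at position $n$ and so never affects a cut over $1 \le j < n$. Turning "no digit of~$z$ increases" into "$\pi(\ell+s)$ and $\pi(r+s)$ are consecutive among the values realized at positions $1 \le j < n$", together with the bookkeeping that $s^*$ produces precisely the defining set of $z^{(s^*)}$ and the careful handling of the two collapsed cases and the parity conventions in the definition of $z^{(i)}$, is where the real work lies.
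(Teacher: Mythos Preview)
Your outline matches the paper's proof: the threshold structure from condition~(\ref{i:1}), the strict inequality $\overline{w_{[\ell,n)}} < \overline{w_{[r,n)}}$ from (\ref{i:2})--(\ref{i:3}) via Remark~\ref{r:pat}, the sign alternation from Lemma~\ref{running-argument-0}, and the reduction of the equality case to showing that $\pi(\ell+s)$ and $\pi(r+s)$ are consecutive integers for $1 \le s < |r-\ell|$. The paper's telescoping formulation (when $w_m=z_m+1$, exactly one of the differences $(w_{\pi^{-1}(k+1)}-w_{\pi^{-1}(k)})-(z_{\pi^{-1}(k+1)}-z_{\pi^{-1}(k)})$, resp.\ $(w_r-w_\ell)-(z_r-z_\ell)$, equals~$1$) is just another way of saying ``there is a single threshold''.

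The genuine gap is the consecutiveness step, which you isolate but do not prove. Your hint ``no digit of~$z$ increases across these intervals'' only yields $z_j = z_{\ell+s} = z_{r+s}$ for any $j$ with $\pi(j)$ strictly between $\pi(\ell+s)$ and $\pi(r+s)$; that alone does not exclude such a~$j$. The missing idea is a \emph{shift}. If $\pi(j)$ lies strictly between $\pi(\ell+s)$ and $\pi(r+s)$ for some $1 \le s < |r-\ell|$, then $z_{\ell+s}=z_j=z_{r+s}$, and Lemma~\ref{running-argument-0} applied to both pairs gives that $\pi(j+1)$ lies strictly between $\pi(\ell+s+1)$ and $\pi(r+s+1)$ (with the order reversed). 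Iterating $|r-\ell|-s$ times---this is exactly Lemma~\ref{l:running-argument} with $z$ in the role of~$w$---produces an index whose $\pi$-value lies strictly between $\pi(\ell+|r-\ell|)$ and $\pi(r+|r-\ell|)$. But the pair $(\ell+|r-\ell|,\,r+|r-\ell|)$ is either $(r,n)$ or $(n,\ell)$, and $\pi(r)=\pi(n)+1$, $\pi(\ell)=\pi(n)-1$ leave no room strictly between, a contradiction. Once consecutiveness is established, your threshold $t$ must equal some $\max(\pi(\ell+s),\pi(r+s))$ (the value $t=\pi(n)$ gives the same cut on $\{1,\dots,n-1\}$ as $t=\pi(r)$, i.e.\ $s=0$), and $w_{[1,n)} = z^{(s)}_{[1,n)}$ follows as you indicate.
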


\begin{proof}
By Proposition~\ref{p:pat} (\ref{i:2})--(\ref{i:3}) and Remark~\ref{r:pat}, we have $\overline{w_{[\ell,n)}} < \overline{w_{[r,n)}}$, hence the collapsedness implies that $w_{[\ell,n)} \ne z_{[\ell,n)}$ or $w_{[r,n)} \ne z_{[r,n)}$.
By Proposition~\ref{p:pat}~(\ref{i:1}), we have thus $w_i > z_i$ for some $1 \le i < n$, and $w_m \ge z_m + w_i - z_i > z_m$, as $m = \pi^{-1}(n)<n$ (because $\pi$ is collapsed). 
If $w_{[1,n)} = z^{(i)}_{[1,n)}$ for some $0 \le i < |r-\ell|$, then we have $w_m = z^{(i)}_m = z_m + 1$.

Assume now that $w_m = z_m + 1$.
From Proposition~\ref{p:pat}~(\ref{i:1}), we get $w_j \in \{z_j,z_j+1\}$ for all $1 \le j < n$.
We have $w_{[\ell,\ell+|r-\ell|)} < w_{[r,r+|r-\ell|)}$, as $w_{[\ell,\ell+|r-\ell|)} = w_{[r,r+|r-\ell|)}$ would imply that $\overline{w_{[\ell,n)}} = \overline{w_{[r,n)}}$.
Since $z_{[\ell,\ell+|r-\ell|)} = z_{[r,r+|r-\ell|)}$, we obtain that $w_{r+i} - w_{\ell+i} =(-1)^i$ for some $0 \le i < |r-\ell|$, thus $w_{\ell+i} = z_{\ell+i}^{(i)}$ and $w_{r+i} = z_{r+i}^{(i)}$.
By Proposition~\ref{p:pat} and its proof, $w_m = z_m+1$ implies that exactly one of the differences $(w_r - w_\ell) - (z_r - z_\ell)$ and $(w_{\pi^{-1}(k+1)} - w_{\pi^{-1}(k)}) - (z_{\pi^{-1}(k+1)} - z_{\pi^{-1}(k)})$, $1 \le k < n$, $k \notin \{\pi(n)-1,\pi(n)\}$, equals~$1$ and all others are~$0$. 
If $i = 0$, then we obtain that  $(w_r - w_\ell) - (z_r - z_\ell) = 1$.
This implies $w_j=z_j=z_j^{(0)}$ for all $1\le j<n$ with $j\ne r$, thus $w_{[1,n)} = z^{(0)}_{[1,n)}$. 
Assume in the following that $1 \le i < |r-\ell|$.
Suppose that $\pi(\ell+i)$ and $\pi(r+i)$ are not consecutive integers, i.e., $\pi(j)$ is between $\pi(\ell+i)$ and $\pi(r+i)$ for some~$j$.
Then Lemma~\ref{l:running-argument} gives that $\pi(j-i+|r-\ell|)$ is between $\pi(\ell+|r-\ell|)$ and $\pi(r+|r-\ell|)$, contradicting that the pair $(\ell+|r-\ell|,r+|r-\ell|)$ is either $(r,n)$ or $(n,\ell)$.
Therefore, we have $\{\pi(r+i),\pi(\ell+i)\} = \{k,k+1\}$ for some $1 \le k < n$, $k \notin \{\pi(n)-1,\pi(n)\}$.
Then $(w_{\pi^{-1}(k+1)} - w_{\pi^{-1}(k)}) - (z_{\pi^{-1}(k+1)} - z_{\pi^{-1}(k)}) = 1$ for this~$k$, $w_{\pi^{-1}(k+1)} - w_{\pi^{-1}(k)} = z_{\pi^{-1}(k+1)} - z_{\pi^{-1}(k)}$ for all other $k \notin \{\pi(n)-1,\pi(n)\}$, and $w_r - w_\ell = z_r - z_\ell$. 
This implies that $w_{[1,n)} = z^{(i)}_{[1,n)}$. 
\end{proof}

Let us illustrate the previous proof by again considering the (collapsed) permutation $\pi= 7325416$.
Recall that $z_{[1,7)} = 100100$, $m = r = 1$, $\ell = 4$.
Choose $w \in \mathbb{N}^\infty$ with $\mathrm{Pat}(w, \Sigma, 7) = \pi$.
In order to satisfy $w_m=z_m+1=2$, we get from Proposition~\ref{p:pat}~(\ref{i:1})
that the prefix $w_{[1,7)}$ must be one of the following six sequences: $200100,\,  200200,\,  200210,\, 210210,\, 211210,\, 211211$. 
But from Proposition~\ref{p:pat} (\ref{i:2})--(\ref{i:3}) and Remark~\ref{r:pat}, the prefixes $200200,\, 210210$ and $211211$ are not possible, only $z^{(0)}_{[1,7)} = 200100$,  $z^{(1)}_{[1,7)} = 200210$ and  $z^{(2)}_{[1,7)} = 211210$ are possible.

The following lemma shows that $b(a)$ is well defined. 

\begin{lemma} \label{l:a}
We have $\widehat{a} = a$.
If $\pi(n) = 1$, then $\overline{0 z_{[m,n)}} \le \overline{z_{[r,n)}}$. 
If $\pi(n) \notin \{1,n\}$, then $\overline{z_{[\ell,n)}} \le \overline{z_{[r,n)}}$. 
\end{lemma}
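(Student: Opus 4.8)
The plan is to prove all three assertions by the same running argument that underlies Lemma~\ref{running-argument-0} and Lemma~\ref{l:running-argument}, applied to the sequence $z_{[1,n)}$ itself. Note first that $z$ satisfies Proposition~\ref{p:pat}(\ref{i:1}) with equality, so Lemma~\ref{l:running-argument} is available for $z$. Since $\pi(m)=n$ is maximal, Lemma~\ref{running-argument-0} gives $z_i\le z_m$ for all $i$, so $z_m=\max_{1\le j<n}z_j$; in particular $a$ begins with the largest digit, which is necessary for $\widehat a=a$.

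For the first assertion I would use that $a$ is eventually periodic, so it suffices to compare $a$ with each of its finitely many distinct suffixes. Every suffix of $a$ has the form $z_{[i,n)}\,T$, where $T$ is the periodic tail ($\overline{z_{[\ell,n)}}$ when $n-m$ is even, $\overline{z_{[r,n)}}$ when $n-m$ is odd, and the corresponding $z^{(i)}$-variant when $\pi$ is collapsed) and $i$ ranges over $\{m,m{+}1,\dots,n{-}1\}$ together with the indices occurring in one period of $T$; all such $i$ lie in $[1,n)$. Writing $\sigma_i=z_{[i,n)}\,T$, I must show $\sigma_m\ge\sigma_i$ for all these $i$. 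I would first compare the equal-length blocks $z_{[m,m+n-i)}$ and $z_{[i,n)}$: as $\pi(m)=n>\pi(i)$, Lemma~\ref{l:running-argument} yields $z_{[m,m+n-i)}\ge z_{[i,n)}$ in the alternating order, with $(-1)^{n-i}\pi(m{+}n{-}i)>(-1)^{n-i}\pi(n)$ in case of equality. If the inequality is strict then $\sigma_m>\sigma_i$ and we are done; if it is an equality, the two sequences agree on their first $n-i$ symbols and the comparison reduces, up to an order flip governed by the parity of $n-i$, to comparing $\sigma_{m+n-i}$ with $\sigma_\ell$ or $\sigma_r$ (according to whether $T$ is the $\ell$- or $r$-periodic tail). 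Here the relation $(-1)^{n-i}\pi(m{+}n{-}i)>(-1)^{n-i}\pi(n)$ together with $\pi(\ell)=\pi(n){-}1$ (resp.\ $\pi(r)=\pi(n){+}1$) is exactly what lets me restart the argument with the correct sign, and I iterate.

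The second and third assertions come from the same comparison applied to two periodic sequences. If $\pi$ is collapsed then, by definition, one of $z_{[\ell,n)},z_{[r,n)}$ is the square of the other, whence $\overline{z_{[\ell,n)}}=\overline{z_{[r,n)}}$ and equality holds. Otherwise I run the argument from the pair $(\ell,r)$: since $\pi(\ell)=\pi(n){-}1<\pi(n){+}1=\pi(r)$, Lemma~\ref{running-argument-0} gives $z_\ell\le z_r$, and in case of equality $\pi(\ell{+}1)>\pi(r{+}1)$, which at the (even) second position again points towards $\overline{z_{[\ell,n)}}\le\overline{z_{[r,n)}}$; iterating yields the inequality. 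For $\pi(n)=1$ the index $\ell$ is undefined and the role of position $n$ is played by the digit $0$ (consistent with $\pi(n)=1$ being minimal, so its ``$z$-value'' is $0$); comparing $\overline{0z_{[m,n)}}$ with $\overline{z_{[r,n)}}$ in the same way, starting from $0\le z_r$, gives $\overline{0z_{[m,n)}}\le\overline{z_{[r,n)}}$.

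The main obstacle is the bookkeeping at the wraparounds of the periodic tails. In the running argument the two indices advance in step, but on reaching $n$ they must be reset to $\ell$ or $r$ (or, for $\pi(n)=1$, interpreted through the digit $0$), and the two sequences being compared generally reset at different steps because their periods $n-\ell$ and $n-r$ differ. At such a reset the value supplied by Lemma~\ref{running-argument-0} is $\pi(n)$, whereas the symbol actually used is governed by $\pi(\ell)=\pi(n){-}1$ or $\pi(r)=\pi(n){+}1$; verifying that this one-step shift in $\pi$-value is compatible with the alternating order at the relevant parity is the delicate point, and it is precisely here that the even/odd split, and hence the choice of $\ell$- or $r$-tail, is forced. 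When the argument never terminates, i.e.\ all compared symbols are equal, the indices eventually synchronize and the squaring relation defining collapsedness emerges; this gives equality in the second and third assertions and, via Lemma~\ref{l:zi} together with $\overline{z_{[\ell,n)}}=\overline{z_{[r,n)}}$, closes the remaining collapsed cases of $\widehat a=a$.
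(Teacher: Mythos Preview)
Your instincts are right—the running argument from Lemma~\ref{l:running-argument} is indeed the engine here, and the paper's proof uses it too. But your iteration is not well-founded as written, and this is a genuine gap rather than a bookkeeping nuisance. After the first reduction you are no longer comparing $\sigma_m$ with some $\sigma_i$; you are comparing $\sigma_{m+n-i}$ with $\sigma_\ell$ (or $\sigma_r$). The next step will produce yet another pair of indices, neither of which need be $m$, $\ell$, or $r$, and there is no monotone quantity forcing termination. Your last paragraph essentially concedes this: you describe the difficulty at the wraparound and assert that the parity bookkeeping works out, but you do not supply an invariant or a decreasing measure. The same problem afflicts your direct argument for $\overline{z_{[\ell,n)}}\le\overline{z_{[r,n)}}$: once one of the two running indices hits $n$ and resets while the other does not, you need exactly the kind of statement you are trying to prove.

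The paper resolves this by proving a stronger intermediate claim, by contradiction with a minimal-length witness. Concretely, it shows for any sequence $w$ with $w_{[1,n)}\in\{z_{[1,n)}\}\cup\{z^{(i)}_{[1,n)}\}$ and $w_{[n,\infty)}\in\{\overline{w_{[\ell,n)}},\overline{w_{[r,n)}},\overline{0w_{[m,n)}}\}$ that $w_{[i,\infty)}\le w_{[n,\infty)}$ whenever $\pi(i)<\pi(n)$ and $w_{[i,\infty)}\ge w_{[n,\infty)}$ whenever $\pi(i)>\pi(n)$. The key is to assume failure, take the \emph{smallest} $k$ for which some finite prefix of length $k$ already witnesses the wrong inequality, and then use Lemma~\ref{l:running-argument} together with the identity $w_{[n,\infty)}=w_{[\ell,\infty)}$ (or $w_{[r,\infty)}$, or $0w_{[m,\infty)}$) to produce a shorter witness—contradiction. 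This minimal-$k$ device is exactly the missing well-foundedness in your scheme; it absorbs every wraparound uniformly. Once the claim is in hand, $\widehat{w}=w_{[m,\infty)}$ follows as in the last paragraph of Proposition~\ref{p:pat}'s proof, and the two comparison inequalities drop out by choosing the tail: with $w_{[n,\infty)}=\overline{z_{[\ell,n)}}$ the claim gives $w_{[n,\infty)}\le w_{[r,\infty)}$, hence $\overline{z_{[\ell,n)}}\le\overline{z_{[r,n)}}$ via Remark~\ref{r:pat}; with $w_{[n,\infty)}=\overline{0z_{[m,n)}}$ one gets the $\pi(n)=1$ inequality. Note also that this framework handles the collapsed case automatically (since $z^{(i)}_{[1,n)}$ still satisfies condition~(\ref{i:1})), whereas your appeal to Lemma~\ref{l:zi} does not obviously yield $\widehat a=a$ for the $z^{(i)}$ sequences.
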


\begin{proof}
To prove that $\widehat{a} = a$, we show that $\widehat{w} = w_{[m,\infty)}$ for all sequences~$w$ satisfying $w_{[1,n)} = z_{[1,n)}$ or $w_{[1,n)} = z^{(i)}_{[1,n)}$ for some $0 \le i < |r-\ell|$, if $\pi(n) \notin \{1,n\}$, $w_{[n,\infty)} = \overline{w_{[\ell,n)}}$, if $\pi(n) \ne 1$, or $w_{[n,\infty)} = \overline{w_{[r,n)}}$, if $\pi(n) \ne n$, or $w_{[n,\infty)} = \overline{0w_{[m,n)}}$, if $\pi(n) = 1$. 
(This means that $w_{[n,\infty)} \in \{w_{[\ell,\infty)}, w_{[r,\infty)}, 0w_{[m,\infty)}\}$.)

We first claim that $w_{[i,\infty)} \le w_{[n,\infty)}$ for all~$i$ with $\pi(i) < \pi(n)$, $w_{[i,\infty)} \ge w_{[n,\infty)}$ for all~$i$ with $\pi(i) > \pi(n)$.
The proof is similar to that of~\eqref{e:win}.
Note that condition~(\ref{i:1}) of Propositon~\ref{p:pat} holds.
Suppose that the claim is false, i.e., \eqref{e:win2} holds for some~$i$. 
Let $k \ge 1$ be minimal such that \eqref{e:wink} holds for some~$i$. 
If $w_{[n,\infty)} = w_{[\ell,\infty)}$, then we have $w_{[i,i+k)} > w_{[\ell,\ell+k)}$, $\pi(i) < \pi(\ell)$, or $w_{[i,i+k)} < w_{[\ell,\ell+k)}$, $\pi(i) > \pi(\ell)$, thus Lemma~\ref{l:running-argument} gives that \eqref{e:wink2} holds for $j = n-|i-\ell|$, $h  = k-n+\ell$ or $h = k-n+i$, contradicting the minimality of~$k$.
For $w_{[n,\infty)} = w_{[r,\infty)}$, the same arguments apply, with $\ell$ replaced by~$r$. 
If $w_{[n,\infty)} = 0 w_{[m,\infty)}$ and $\pi(n) = 1$, then we have $w_{[i,i+k)} < 0 w_{[m,m+k-1)}$, thus $w_{[i+1,i+k)} > w_{[m,m+k-1)}$, with $\pi(i+1) < n = \pi(m)$. 
Now, \eqref{e:wink2} holds for $j = n-|i+1-m|$, $h  = k-n+m$ or $h = k-n+i$, contradicting again the minimality of~$k$.
This proves the claim. 

Similarly to the last paragraph of the proof of Propositon~\ref{p:pat}, we obtain for $1 \le i,j < n$ that $w_{[i,\infty)} \le w_{[j,\infty)}$ if $\pi(i) < \pi(j)$, thus $\widehat{w} = \max_{1\le i < n} w_{[i,\infty)} = w_{[m,\infty)}$. 
Note that if $m=n$, then $\pi(n)=n$ and $w_{[m,\infty)}=w_{[\ell,\infty)}$.
This implies that $\widehat{a} = a$. 
 
For $\pi(n) \notin \{1,n\}$, we have seen above that $w_{[n,\infty)} \le w_{[r,\infty)}$ for $w_{[1,n)} = z_{[1,n)}$ and $w_{[n,\infty)} = \overline{z_{[\ell,n)}}$, thus $w_{[n,\infty)} \le \overline{w_{[r,n)}}$ by Remark~\ref{r:pat}, i.e., $\overline{z_{[\ell,n)}} \le \overline{z_{[r,n)}}$.
In the same way, taking $w_{[n,\infty)} = \overline{0 z_{[m,n)}}$ gives that $\overline{0 z_{[m,n)}} \le \overline{z_{[r,n)}}$ for $\pi(n) = 1$.
\end{proof}

The next lemma justifies the definition of collapsedness. 
Here, a~finite word~$v$ is \emph{primitive} if it is not the power of another word, i.e., if $v = s^k$ implies that $s = v$, $k = 1$. 
We say that $v$ is \emph{almost primitive} if $v = s^k$ implies that $k = 1$, or $k = 2$ and $s$ has odd length. 
The length of a finite word~$v$ is denoted by~$|v|$. 

\begin{lemma} \label{l:period}
Assume that $w_{[1,n)}$ satisfies point~(\ref{i:1}) of Propositon~\ref{p:pat}.
If $\pi(n) = 1$ and $n-m$ is even, then $w_{[m,n)} 0$ is primitive.
If $\pi(n) \ne 1$, then $w_{[\ell,n)}$ is almost primitive. 
If $\pi(n) \ne n$, then $w_{[r,n)}$ is almost primitive. 
In particular, for $\pi(n) \notin \{1,n\}$, we have $\overline{z_{[\ell,n)}} = \overline{z_{[r,n)}}$ if and only if $\pi$ is collapsed.
\end{lemma}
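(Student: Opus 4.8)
The plan is to prove all four assertions by a single device and then read off the final equivalence. Suppose the word in question is a proper power $s^k$ with $k \ge 2$ and $p = |s|$, and place the ``period markers'' $i_j = c + jp$ for $0 \le j \le k-1$, where $c \in \{\ell, r, m\}$ is the relevant starting position. Because $w_{[i_j,n)}$ is a suffix of $w_{[i_{j'},n)}$ whenever $j \ge j'$ and these suffixes are literally equal powers of $s$, every comparison of two markers $i_{j'} < i_j$ falls into the \emph{equality} case of Lemma~\ref{l:running-argument}. Thus each inequality between $\pi(i_{j'})$ and $\pi(i_j)$ is converted into an inequality between some $\pi(i_{j''})$ and $\pi(n)$, whose direction is controlled by the parity of the exponent of $s$ consumed, i.e.\ by a sign $(-1)^{(\cdots)p}$. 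The whole argument is the exploitation of these parity-tagged conclusions, using as leverage that $\pi(c)$ sits immediately next to $\pi(n)$: indeed $\pi(\ell) = \pi(n)-1$ and $\pi(r) = \pi(n)+1$, while $\pi(m) = n$ and (in that case) $\pi(n) = 1$ are the extreme values.

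For $\pi(n) \ne 1$ I would prove $w_{[\ell,n)}$ (and symmetrically $w_{[r,n)}$) almost primitive in three steps. If $p$ is even and $k \ge 2$, the adjacent comparisons $i_j, i_{j+1}$ all carry the same sign, so $\pi(i_0), \ldots, \pi(i_{k-1})$ is strictly monotone; since $\pi(i_0) = \pi(\ell) = \pi(n)-1$ and $\pi(i_{k-1}) = \pi(n-p)$, monotonicity squeezes $\pi(n-p)$ strictly between the consecutive integers $\pi(n)-1$ and $\pi(n)$, which is impossible. If $p$ is odd and $k \ge 3$ is odd, comparing $i_0$ with $i_1$ and with $i_{k-1}$ (both equality cases) yields two relations between the Booleans ``$\pi(i_1) > \pi(n)$'' and ``$\pi(i_{k-1}) > \pi(n)$'' whose composition reads $b = 1-b$, a contradiction. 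If $p$ is odd and $k \ge 4$ is even, I rewrite $s^k = (s^2)^{k/2}$: the period $s^2$ now has even length and exponent $k/2 \ge 2$, so the first (even-period) step applies verbatim. Only $k = 1$ and $k = 2$ with $|s|$ odd survive, which is exactly almost primitivity; the identical argument with $\pi(r) = \pi(n)+1$ handles $w_{[r,n)}$.

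For $\pi(n) = 1$ with $n - m$ even, set $v = w_{[m,n)}0$, of \emph{odd} length $n-m+1$; hence any factorization $v = s^k$ with $k \ge 2$ has both $k$ and $|s| = p$ odd, so $k \ge 3$. Here $\pi(i_0) = \pi(m) = n$ is maximal, so the comparison of $i_0$ with the next marker $i_1 = m+p$ is always the ``$>$'' case of Lemma~\ref{l:running-argument}; its equality conclusion carries the sign $(-1)^{(k-1)p - 1} = -1$ and thus forces $\pi(n-p) < \pi(n) = 1$, which is absurd. Hence $v$ is primitive. Finally, for the ``in particular'' I apply the almost primitivity of $z_{[\ell,n)}$ and $z_{[r,n)}$ (the case $w = z$): the identity $\overline{z_{[\ell,n)}} = \overline{z_{[r,n)}}$ forces a common primitive root $t$ with $z_{[\ell,n)} = t^a$, $z_{[r,n)} = t^b$ and $a,b \in \{1,2\}$; as $\ell \ne r$ makes the two lengths differ, we must have $\{a,b\} = \{1,2\}$, i.e.\ one word is the square of the other, which is precisely collapsedness, and the converse is immediate from $\overline{t^2} = \overline{t}$.

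The main obstacle, and the step I would flag, is the odd-period case: a direct analysis of an even exponent $k \ge 4$ with odd period is genuinely messy, because the values $\pi(i_j)$ only zigzag rather than run monotonically. The clean way past it is the observation that squaring the period turns such a word into an even-period power, so that the single robust contradiction (the monotone squeeze) does all the work. Getting the parity bookkeeping of the signs $(-1)^{(\cdots)p}$ exactly right, and verifying that every marker comparison really is an equality case of Lemma~\ref{l:running-argument}, is where the care is needed.
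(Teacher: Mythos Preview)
Your proof is correct. The treatment of the case $\pi(n)=1$ and the derivation of the final equivalence match the paper's argument closely. Where you genuinely diverge is in the almost-primitivity of $w_{[\ell,n)}$ (and, symmetrically, $w_{[r,n)}$) for odd period. The paper fixes the \emph{minimal} period~$p$, derives the sign formula $\mathrm{sgn}(\pi(\ell+jp)-\pi(\ell+ip)) = (-1)^{ip}\,\mathrm{sgn}(\pi(\ell+(j-i)p)-\pi(\ell))$, and then runs a four-case analysis on the parity of~$p$ together with the sign of $\pi(\ell)-\pi(\ell+p)$ (for $p$ even) or $\pi(\ell)-\pi(\ell+2p)$ (for $p$ odd); in each branch it builds a monotone chain among the $\pi(\ell+jp)$ and squeezes against $\pi(\ell)=\pi(n)-1$. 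You instead dispose of odd~$p$ with odd exponent $k\ge 3$ by the Boolean contradiction from the two comparisons $(i_0,i_1)$ and $(i_0,i_{k-1})$ (which works precisely because $\pi(i)>\pi(\ell)=\pi(n)-1$ is equivalent to $\pi(i)>\pi(n)$ for $i\notin\{\ell,n\}$), and reduce odd~$p$ with even $k\ge 4$ to the even-period case via $s^k=(s^2)^{k/2}$. Your route is shorter and sidesteps the four-way split; the paper's route yields slightly finer information (it pins down in each sub-case whether $n=\ell+p$ or $n=\ell+2p$), which is not needed for the lemma itself.

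One small expository point: your ``squeeze'' sentence for even~$p$ literally describes only the increasing direction. In the decreasing direction one obtains $\pi(n-p)<\pi(\ell)=\pi(n)-1$ from monotonicity together with $\pi(n-p)>\pi(n)$ from the equality clause of Lemma~\ref{l:running-argument}; this is equally impossible, but it is not a squeeze \emph{between} $\pi(n)-1$ and $\pi(n)$. The symmetric remark applies to the $r$-case.
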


\begin{proof}
Let first $n-m$ be even, and suppose that $w_{[m,n)} 0 = s^k$ for some word~$s$ and some $k \ge 2$.
Then $|s|$ is odd, we have $\pi(m) = n > \pi(m+|s|)$ and $w_{[m,n-|s|)} = w_{[m+|s|,n)}$, thus $\pi(n-|s|) < \pi(n)$ by Lemma~\ref{l:running-argument} (as $n-m-|s|$ is odd).
If $\pi(n) = 1$, then this is impossible, hence $w_{[m,n)}0$ is primitive.
 
Let now $\pi(n) \ne 1$, and let $p \ge 1$ be minimal such that $p$ divides $n-\ell$ and $w_{[\ell,n)} = (w_{[\ell,\ell+p)})^{(n-\ell)/p}$. 
By Lemma~\ref{l:running-argument}, we have, for $1 \le i < j \le (n-\ell)/p$,
\[
\mathrm{sgn}\big(\pi(\ell+jp) - \pi(\ell+ip)\big) = (-1)^{ip} \mathrm{sgn}\big(\pi(\ell+jp-ip) - \pi(\ell)\big).
\]
We distinguish the following cases:
\begin{itemize}
\item
If $p$ is even and $\pi(\ell) < \pi(\ell+p)$, then we get that $\pi(\ell) < \pi(\ell+p) < \pi(\ell+2p) < \cdots < \pi(n)$. 
Since $\pi(\ell)=\pi(n)-1$, we get $n = \ell+p$. 
\item
If $p$ is even and $\pi(\ell) > \pi(\ell+p)$, then we have $\pi(\ell) > \pi(\ell+p) > \pi(\ell+2p) > \cdots > \pi(n)$, which is impossible.
\item
If $p$ is odd and $\pi(\ell) < \pi(\ell+2p)$ (if $n-\ell \ge 2p$), then we obtain that $\pi(\ell) < \pi(\ell+2p) < \pi(\ell+4p) < \cdots < \pi(\ell + \lfloor \frac{n-\ell}{2p}\rfloor 2p)$.
Therefore, $n = \ell+2p$ or $(n-\ell)/p$ is odd. 
If $(n-\ell)/p$ is odd, then we get that $\pi(\ell+p) > \pi(\ell+3p) > \cdots > \pi(n)$, thus $\pi(\ell+p) > \pi(\ell)$.
This implies that $\pi(n) > \pi(n-p)$, and we know from above that $\pi(n-p) \ge \pi(\ell)$, hence $n = \ell+p$. 
\item
If $p$ is odd and $\pi(\ell) > \pi(\ell+2p)$ (if $n-\ell \ge 2p$), then $\pi(\ell) > \pi(\ell+2p) > \pi(\ell+4p) > \cdots > \pi(\ell + \lfloor \frac{n-\ell}{2p}\rfloor 2p)$, thus $(n-\ell)/p$ is odd. 
Now, $\pi(\ell+p) < \pi(\ell)$ is impossible since this would imply that $\pi(n) < \pi(n-p) \le \pi(\ell)$.
Therefore, we have $\pi(\ell) < \pi(\ell+p) < \pi(\ell+3p) < \cdots < \pi(n)$, thus $n = \ell + p$.
\end{itemize}
The proof for $\overline{w_{[r,n)}}$ is symmetric. 

If $\ell < r$ and $\overline{z_{[\ell,n)}} = \overline{z_{[r,n)}}$, then the almost primitivity of $z_{[\ell,n)}$ gives $z_{[\ell,n)} = z_{[r,n)} z_{[r,n)}$, with $|n-r|$ odd. 
Similarly, $\ell > r$ and $\overline{z_{[\ell,n)}} = \overline{z_{[r,n)}}$ imply that $z_{[r,n)} = z_{[\ell,n)} z_{[\ell,n)}$, with $|n-\ell|$ odd.  
Thus $\pi$ is collapsed if and only if $\overline{z_{[\ell,n)}} = \overline{z_{[r,n)}}$. 
\end{proof}

\section{Characterization of $(-\beta)$-shifts}
We determine for a given sequence to which $(-\beta)$-shifts it belongs. 
In the following proposition, which is proved at the end of the section, we use the notation
\[
v' = \begin{cases}v_1 v_2 \cdots v_{j-1} (v_j{-}1) 0 & \mbox{if}\ v_j \ne 0, \\ v_1 v_2 \cdots v_{j-2} (v_{j-1}{+}1) & \mbox{if}\ v_j = 0,\end{cases}
\]
for $v = v_1 v_2 \cdots v_j \in \mathbb{N}^+ \setminus \{0\}$, where $\mathbb{N}^+$ denotes the set of non-empty finite words of non-negative integers. 
Then we have $\overline{v} < \overline{v'}$ if $|v|$ is even, $\overline{v} > \overline{v'}$ if $|v|$ is odd.

\begin{proposition} \label{p:Omega}
Let $w \in \mathbb{N}^\infty$ be a bounded sequence. 
Then we have $w \in \Omega_{-\beta}$ for all $\beta > b(\widehat{w})$ and $w \notin \Omega_{-\beta}$ for all $1 < \beta < b(\widehat{w})$. 

If $b(\widehat{w}) > 1$, then we have $w \in \Omega_{-b(\widehat{w})}$ if and only if $w$ does not end with~$0 \widehat{w}$, and $\widehat{w} = d_{-b(\widehat{w})}(1)$ or $\widehat{w} = \overline{v'}$, $d_{-b(\widehat{w})}(1) = \overline{v}$ with $|v|$ odd, $v$ primitive.
\end{proposition}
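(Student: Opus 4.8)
The plan is to run everything through the Ito--Sadahiro admissibility criterion recalled before the proposition and to translate each of its two inequalities into a condition on $\widehat w$ alone, using that $\widehat w$ is shift-maximal (i.e.\ $\widehat{\widehat w}=\widehat w$). For the upper inequality, observe that $d_{-\beta}(1)\ge w_{[k,\infty)}$ for all $k$ holds if and only if $d_{-\beta}(1)\ge\widehat w$, since $d_{-\beta}(1)$ is an upper bound of $\{w_{[k,\infty)}\}$ exactly when it dominates their supremum. For the lower inequality I would use the elementary rule that $0s>0t$ in the alternating order is equivalent to $s<t$: writing the Ito--Sadahiro lower bound as $0\,d_{-\beta}(1)$ in the generic case, and checking that it equals $0\widehat w$ when $\widehat w=d_{-\beta}(1)$ is not purely periodic of odd period, the requirement $w_{[k,\infty)}>0\widehat w$ for all $k$ reduces, after separating the cases $w_k>0$ and $w_k=0$, to the single condition that no shift of $w$ equals $0\widehat w$, i.e.\ that $w$ does not end with $0\widehat w$.

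To connect $d_{-\beta}(1)$ with $b(\widehat w)$ I would introduce the valuation $V_\beta(s)=-\sum_{k\ge1}(s_k+1)/(-\beta)^k$, which is weakly order-preserving for the alternating order (compare term by term at the first discrepancy) and satisfies $V_\beta(d_{-\beta}(1))=1$. Setting $g(\beta)=1+\sum_{k\ge1}(\widehat w_k+1)/(-\beta)^k=1-V_\beta(\widehat w)$, the number $b(\widehat w)$ is by definition its largest root, and $g(\beta)\to1>0$ as $\beta\to\infty$, so $g(\beta)>0$ for every $\beta>b(\widehat w)$. Hence for such $\beta$ one gets $V_\beta(\widehat w)<1=V_\beta(d_{-\beta}(1))$ and, by weak monotonicity of $V_\beta$, the \emph{strict} inequality $\widehat w<d_{-\beta}(1)$. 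This gives the upper bound with room to spare, and the strictness then forces the lower bound as well, so $w\in\Omega_{-\beta}$. For $1<\beta<b(\widehat w)$ I would argue that the upper bound must fail: by the strict monotonicity of $\beta\mapsto d_{-\beta}(1)$ (Theorem~3 of \cite{Steiner13}) together with the boundary analysis below, $d_{-\beta}(1)<\widehat w$, whence $w\notin\Omega_{-\beta}$.

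For the second statement I would analyze $\beta=b(\widehat w)$ directly. Here $g(\beta)=0$, so $V_\beta(\widehat w)=1=V_\beta(d_{-\beta}(1))$, i.e.\ $\widehat w$ and $d_{-\beta}(1)$ have the same value. Classifying the equal-value pairs of $(-\beta)$-expansions yields the stated dichotomy: either $\widehat w=d_{-\beta}(1)$, or $d_{-\beta}(1)=\overline v$ is purely periodic with odd primitive period and $\widehat w=\overline{v'}$, the two being identified because $\overline v>\overline{v'}$ for odd $|v|$ while both evaluate to $1$. In either case the upper bound $\widehat w\le d_{-\beta}(1)$ holds, and a short computation shows the Ito--Sadahiro lower bound equals $0\widehat w$ (in the odd-period case using $0\,\overline{v'}=\overline{0\,v_1\cdots v_{p-1}(v_p-1)}$, which is exactly the bound appearing in Theorem~10 of \cite{Ito-Sadahiro09}). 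Thus membership at the threshold is governed solely by the strict lower bound, which by the reduction of the first paragraph is precisely the condition that $w$ does not end with $0\widehat w$, completing the characterization.

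The main obstacle I anticipate is controlling the ``gap'' between $\overline{v'}$ and $\overline v$ in the odd-period case: I must rule out any shift $w_{[k,\infty)}$ --- and, when $\beta>b(\widehat w)$ happens to make $d_{-\beta}(1)$ itself odd-periodic, any shift-maximal sequence --- lying strictly between $\overline{v'}$ and $\overline v$. Concretely, I need that no shift-maximal sequence sits strictly inside the interval $(\overline{v'},\overline v)$, equivalently that $b$ is injective on shift-maximal sequences apart from the identification of each pair $\{\overline{v'},\overline v\}$; this is what guarantees both that $d_{-\beta}(1)<\widehat w$ for $\beta<b(\widehat w)$ in the second case and that the strict lower bound survives for all $\beta>b(\widehat w)$. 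I expect to obtain it from the admissibility description (Theorem~10 of \cite{Ito-Sadahiro09}) combined with the monotonicity and existence results of \cite{Steiner13} (Theorem~3 and Corollary~1), the primitivity of $v$ entering precisely to ensure that $p$ is the minimal period, so that $\overline{v'}$ is the immediate predecessor of $\overline v$ among admissible sequences.
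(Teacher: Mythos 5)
Your first paragraph (reducing the Ito--Sadahiro criterion to conditions on $\widehat w$, and identifying failure of the strict lower bound with ``$w$ ends with $0\widehat w$'', including the identity $0\,\overline{v'}=\overline{0v_1\cdots v_{p-1}(v_p{-}1)}$) is sound and matches the concluding step of the paper's proof. The genuine gap is in the second paragraph, and it is the linchpin of your whole argument: the claimed ``weak order-preservation'' of $V_\beta$ with respect to the alternating lexicographical order is false. For example, with $\beta=2$ one has $\overline{0}<1\overline{9}$ in the alternating order, yet $V_2(\overline{0})=\tfrac13>-\tfrac23=V_2(1\overline{9})$: because the signs in the series alternate, the tail after the first discrepancy can overwhelm the discrepancy term unless the digit differences are bounded by $\beta-1$, and no such bound is available here since the digits of $\widehat w$ are unrelated to~$\beta$. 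Moreover, no repair based only on the sign of $g(\beta)=1-V_\beta(\widehat w)$ can work: $b(\widehat w)$ is defined as the \emph{largest} root of~$g$, and you never rule out that $g(\beta_0)>0$ for some $1<\beta_0<b(\widehat w)$ (between smaller roots). At such a $\beta_0$ your argument would yield $\widehat w<d_{-\beta_0}(1)$ and hence $w\in\Omega_{-\beta_0}$, contradicting the very proposition being proved, which asserts $w\notin\Omega_{-\beta}$ for \emph{all} $1<\beta<b(\widehat w)$. This shows that the comparison between $\widehat w$ and $d_{-\beta}(1)$ cannot be extracted from the single valuation equation; one must use the full system~\eqref{e:bw} (all shifted valuations in $[0,1]$). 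That is exactly what the paper does: Lemma~\ref{l:bw} (that $\widehat w\in W_{-b(\widehat w)}$, which is nontrivial and rests on Corollary~1 of~\cite{Steiner13}) combined with the separation Lemma~\ref{l:b-increasing}, which in turn needs the structure and contiguity Lemmas~\ref{l:ibeta} and~\ref{l:concatenation}.

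The same omission undermines your third and fourth paragraphs. At the threshold $\beta=b(\widehat w)$, knowing $V_\beta(\widehat w)=1$ alone does not let you ``classify the equal-value pairs'': the dichotomy $\widehat w=d_{-\beta}(1)$ or $\widehat w=\overline{v'}$ is the classification of $W_{-\beta}$ (Lemma~\ref{l:ibeta}), and invoking it requires first knowing that $\widehat w$ satisfies \emph{all} conditions of~\eqref{e:bw}, i.e.\ Lemma~\ref{l:bw} again. Likewise, for $\beta>b(\widehat w)$ with $d_{-\beta}(1)=\overline{v}$, $|v|$ odd, the strict lower bound requires $\widehat w<\overline{v'}$, which is strictly stronger than $\widehat w<d_{-\beta}(1)=\overline{v}$; your last paragraph correctly identifies this ``gap control'' (no shift-maximal sequence strictly between $\overline{v'}$ and $\overline{v}$, separation of the sets $W_{-\alpha}$) as the obstacle, but then defers it to the cited references. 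That deferred statement is not a loose end one can expect to quote: it is the substance of the paper's proof, occupying Lemmas~\ref{l:Wbeta} through~\ref{l:bw}. So as written, the proposal establishes only the easy reduction and leaves the core of the proposition unproved, with its one concrete mechanism (monotonicity of $V_\beta$) being incorrect.
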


For $\beta > 1$, let $W_{-\beta}$ be the set of sequences $w \in \mathbb{N}^\infty$ such that $\widehat{w} = w$,
\begin{equation} \label{e:bw}
- \sum_{j=1}^\infty \frac{w_j+1}{(-\beta)^j} = 1 \quad \mbox{and} \quad - \sum_{j=1}^\infty \frac{w_{k+j}+1}{(-\beta)^j} \in [0,1] \quad \mbox{for all}\ k \ge 1.
\end{equation}
By Corollary~1 of~\cite{Steiner13}, for each $w \in \mathbb{N}^\infty$ with $\widehat{w} = w > u$, there is a unique $\beta > 1$ such that $w \in W_{-\beta}$. 
Let $W_{-1}$ be the set of sequences $w \in \mathbb{N}^\infty$ such that $\widehat{w} = w \le u$.

\begin{lemma} \label{l:Wbeta}
For $1 \le \alpha < \beta$, we have $W_{-\alpha} \cap W_{-\beta} = \emptyset$.
If $w \in W_{-1}$, then $w = \overline{\varphi^k(0)}$ for some $k \ge 0$ or $w = u$. 
If $w \in W_{-\beta}$, $\beta > 1$, then $w > u$. 
\end{lemma}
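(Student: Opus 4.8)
\noindent\emph{Proof strategy.} My plan is to prove the three assertions in the order that makes them support one another: first the description of $W_{-1}$, then the strict inequality $w>u$ for $\beta>1$, and finally the disjointness, which will be a one-line consequence. I first reduce the description of $W_{-1}$ to binary sequences. If $\widehat w=w$, then for every $k\ge1$ the relation $w\ge w_{[k,\infty)}$, evaluated at the first coordinate, gives $w_1\ge w_k$; thus $w_1=\max_{k\ge1}w_k$. Since $u$ begins with $1$ and position~$1$ is odd (so the alternating order there is the natural one), $w\le u$ forces $w_1\le1$, so $w\in\{0,1\}^\infty$. If $w=\overline0=\overline{\varphi^0(0)}$ we are done, so assume $w\ne\overline0$, i.e.\ $w_1=1$.

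The description of $W_{-1}$ now rests on a desubstitution. I claim that every self-maximal binary $w\le u$ with $w\ne\overline0$ lies in $\varphi(\{0,1\}^\infty)$ and that its unique preimage $w'$, defined by $w=\varphi(w')$, is again self-maximal and $\le u$. Granting this, I iterate $w=\varphi(w^{(1)})=\varphi^2(w^{(2)})=\cdots$. If some $w^{(K)}=\overline0$, then $w=\varphi^K(\overline0)=\overline{\varphi^K(0)}$. Otherwise the desubstitution never stops, so $w$ begins with $\varphi^i(w^{(i)}_1)$ for every $i\ge1$; as $w^{(i)}_1\in\{0,1\}$ and both $\varphi^i(0)$ and $\varphi^i(1)$ are prefixes of $u$ of length tending to infinity, $w$ agrees with $u$ on arbitrarily long prefixes, whence $w=u$. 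This yields exactly the claimed trichotomy.

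This desubstitution step is the main obstacle, and it is where the alternating order and the bound $w\le u$ must be combined. Self-maximality by itself does not impose the block structure — a shift that begins with $0$ is automatically smaller than $w$, so $\widehat w=w$ tolerates a factor $000$ — hence the recognizability (that $w$ starts with $1$ and all its maximal runs of $0$'s have length exactly~$2$, equivalently that $w$ parses uniquely into the blocks $\varphi(0)=1$ and $\varphi(1)=100$) has to be read off from $w\le u$ sharpened by $\widehat w=w$. Concretely one shows that $w_2=1$ already forces $w=\overline1$, that $w_2=0$ forces $w_3=0$ and $w_4=1$, and more generally that $w$ must copy $u$ block by block until it first drops strictly below $u$. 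One then checks that passing to the preimage preserves $\widehat{w'}=w'$ and $w'\le u$, using that $\varphi$ is compatible with the alternating order because both blocks $\varphi(0)$ and $\varphi(1)$ have odd length. This recognizability-and-preservation is the technical core of the lemma.

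For the third assertion, suppose $w\in W_{-\beta}$ with $\beta>1$ but $w\le u$. Then $\widehat w=w\le u$, so $w\in W_{-1}$ and, by the part just proved, $w\in\{\overline{\varphi^k(0)}:k\ge0\}\cup\{u\}$. For each such $w$ one has $-\sum_{j\ge1}(w_j+1)/(-\beta)^j<1$ for every $\beta>1$: for a purely periodic $w$ this is an elementary inequality in $\beta^{-1}$ (for instance $w=\overline{100}$ gives the value $1-(1-\beta^{-1})^2/(1+\beta^{-3})$), and for $w=u$ it is recorded by $b(u)=1$, i.e.\ by the fact that the series $1+\sum_j(u_j+1)/(-x)^j$ has no positive root $x>1$. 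This contradicts the first relation in~\eqref{e:bw}, so $w>u$. The first assertion is then immediate: for $1<\alpha<\beta$, a sequence $w\in W_{-\alpha}\cap W_{-\beta}$ would satisfy $w>u$, so $\widehat w=w>u$ and Corollary~1 of~\cite{Steiner13} would give a unique base with $w$ in its $W$-set, contradicting $\alpha\ne\beta$; and for $\alpha=1$, every $w\in W_{-1}$ has $w\le u$ while every $w\in W_{-\beta}$ with $\beta>1$ has $w>u$, so the intersection is empty.
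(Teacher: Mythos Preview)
Your overall strategy coincides with the paper's: characterize $W_{-1}$, show each of its elements fails equation~\eqref{e:bw} for $\beta>1$, and deduce disjointness from Corollary~1 of~\cite{Steiner13}. The difference is that the paper outsources both substantive steps to the literature (Theorem~1 of~\cite{NguemaNdong16} for the description of $W_{-1}$, Lemma~3.4 and Proposition~3.5 of~\cite{Liao-Steiner12} for $\overline{\varphi^k(0)}\notin W_{-\beta}$ and $u\notin W_{-\beta}$), while you attempt self-contained arguments.

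Your desubstitution sketch for $W_{-1}$ is on the right track and is essentially how the cited theorem is proved. Two points deserve to be made explicit. First, the recognizability (no factor $101$, no factor $000$) follows cleanly once you know $w$ begins with $1001$: any later occurrence of $101$ gives a shift $101\cdots>100\cdots=w$ at odd position~$3$, and any $1000$ gives a shift $1000\cdots>1001\cdots=w$ at even position~$4$. Second, the preservation step relies on $\varphi$ being order-preserving for the alternating order; your reason (``both blocks have odd length'') is not quite the mechanism, since $\varphi(0)=1$ and $\varphi(1)=100$ share their first letter. What actually happens is that a first difference at position~$k$ in the preimage becomes a first difference at position $|\varphi(a_{[1,k)})|+2$ in the image, with the larger letter going to the smaller one; the parity flip in the position exactly compensates the flip in the comparison, so the order is preserved.

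The genuine gap is in your argument for $w>u$ when $\beta>1$. For the periodic sequences $\overline{\varphi^k(0)}$ you assert the sum is strictly below~$1$ as ``an elementary inequality'' and check only $k=2$; a proof for all~$k$ is needed (this is the content of Lemma~3.4 of~\cite{Liao-Steiner12}). More seriously, your justification for $u$ is circular: in this paper $b(w)=1$ for $w\le u$ is a \emph{definition}, not a statement about roots of the series, so ``$b(u)=1$'' does not by itself say that $1+\sum_j(u_j+1)/(-x)^j$ has no root $x>1$. That non-existence is exactly what must be established, and it is nontrivial (Proposition~3.5 of~\cite{Liao-Steiner12}). Once those two facts are in hand, your deduction of disjointness is correct and matches the paper's.
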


\begin{proof}
By Theorem~1 of~\cite{NguemaNdong16}, $w \in W_{-1}$ implies that $w = \overline{\varphi^k(0)}$ for some $k \ge 0$ or $w = u$. 
(Note that $\phi(1)$ should be $\phi^\infty(1)$ in Nguema Ndong's theorem.) 
For $w = \overline{\varphi^k(0)}$, $k \ge 1$, we have ${-} \sum_{j=1}^\infty \frac{w_j+1}{(-\beta)^j} \ne 1$ for all $\beta > 1$ by Lemma~3.4 of~\cite{Liao-Steiner12}.
By Proposition~3.5 of~\cite{Liao-Steiner12}, we have $u \notin W_{-\beta}$ for all $\beta > 1$.
This implies that $W_{-1} \cap W_{-\beta} = \emptyset$ for all $\beta > 1$, in particular $w > u$ for all $w \in W_{-\beta}$, $\beta > 1$. 
Hence, we have $W_{-\alpha} \cap W_{-\beta} = \emptyset$ for distinct $\alpha, \beta > 1$ by Corollary~1 of~\cite{Steiner13}. 
\end{proof}

The set $W_{-\beta}$ is related to $d_{-\beta}(1)$ in the following way. 
Here, $\{v,v'\}^\infty$ dentoes the set of all infinite concatenations of copies of $v$ and~$v'$. 
We use the polynomials
\[
P_{v_1v_2\cdots v_j}(x) = (-x)^j + \sum_{k=1}^j (v_k+1) (-x)^{j-k}.
\]

\begin{lemma} \label{l:ibeta}
We have $d_{-\beta}(1) \in W_{-\beta}$ for all $\beta > 1$. 
If $d_{-\beta}(1)$ is not purely periodic, then $W_{-\beta} = \{d_{-\beta}(1)\}$.
If $d_{-\beta}(1) = \overline{v}$, $v$~primitive, then $w \in W_{-\beta}$ is equivalent to $w \in \{v,v'\}^\infty$ and $\widehat{w} = w$.
Moreover, $v$~does not end with~$0$, $v'$~is primitive, and $\overline{v'} \in W_{-\beta}$ if $|v|$ is odd. 
\end{lemma}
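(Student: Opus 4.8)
The plan is to translate the defining equations of $W_{-\beta}$ into the dynamics of $T_{-\beta}$ and then read off the admissible sequences from the branching structure of the orbit of~$1$. For $w\in\mathbb{N}^\infty$ and $k\ge1$, set $t_k = -\sum_{j\ge1}(w_{k+j-1}+1)/(-\beta)^j$, so that the conditions \eqref{e:bw} defining $w\in W_{-\beta}$ read $t_1=1$, $t_k\in[0,1]$ for all~$k$, together with $\widehat w=w$. A direct computation gives the recursion $t_{k+1}=w_k+1-\beta t_k$, the analogue of $T_{-\beta}(x)=\lfloor\beta x\rfloor+1-\beta x$. First I would note that $t_{k+1}\in[0,1]$ is equivalent to $w_k\in[\beta t_k-1,\beta t_k]$; hence $w_k=\lfloor\beta t_k\rfloor$ and $t_{k+1}=T_{-\beta}(t_k)$ are forced whenever $\beta t_k\notin\mathbb{Z}$, while if $\beta t_k\in\mathbb{Z}$ there are exactly two admissible digits, $w_k=\beta t_k$ (giving $t_{k+1}=1$) and $w_k=\beta t_k-1$ (giving $t_{k+1}=0$), the latter only when $\beta t_k\ge1$. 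Thus elements of $W_{-\beta}$ correspond precisely to the non-deterministic $T_{-\beta}$-orbits starting at $t_1=1$, staying in $[0,1]$, using nonnegative digits, and satisfying $\widehat w=w$.

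For the first two assertions, the orbit $t_k=T_{-\beta}^{k-1}(1)\in(0,1]$ of the genuine expansion yields $w=d_{-\beta}(1)$, whose tails lie in $(0,1]\subseteq[0,1]$ and which satisfies $\widehat{d_{-\beta}(1)}=d_{-\beta}(1)$, since each shift $\Sigma^{k-1}d_{-\beta}(1)=d_{-\beta}(T_{-\beta}^{k-1}(1))$ is the expansion of a point in $(0,1]$ and $d_{-\beta}$ is order preserving; hence $d_{-\beta}(1)\in W_{-\beta}$. A branch occurs at step~$k$ exactly when $\beta t_k\in\mathbb{Z}$, and then $t_{k+1}=1$, so the orbit returns to~$1$ and $d_{-\beta}(1)$ is purely periodic. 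Consequently, if $d_{-\beta}(1)$ is not purely periodic there is never a choice: the induction $w_k=\lfloor\beta t_k\rfloor$ forces $w=d_{-\beta}(1)$, so $W_{-\beta}=\{d_{-\beta}(1)\}$.

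Now suppose $d_{-\beta}(1)=\overline v$ with $v$ primitive, so the minimal period $p=|v|$ equals the least return time of~$1$. Primitivity makes $s_{p-1}=T_{-\beta}^{p-1}(1)$ the unique branch point of the orbit, with $\beta s_{p-1}=v_p\in\mathbb{Z}$; since $s_{p-1}\in(0,1]$ we get $v_p\ge1$, i.e.\ $v$ does not end with~$0$. At this branch the digit $v_p$ continues periodically (block~$v$), whereas the digit $v_p-1$ sends $t$ to~$0$, after which the only nonnegative digit is~$0$ and $t$ returns to~$1$ (block $v'=v_1\cdots v_{p-1}(v_p-1)0$). Since each block is a loop of $T_{-\beta}$ fixing~$1$ and the associated affine maps are contractions with common fixed point~$1$, every $w\in\{v,v'\}^\infty$ has $t_1=1$ and tails in $[0,1]$; conversely any $w\in W_{-\beta}$ is obtained by choosing blocks at successive returns to~$1$. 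This proves that $w\in W_{-\beta}$ iff $w\in\{v,v'\}^\infty$ and $\widehat w=w$. For the primitivity of~$v'$ I would argue that the orbit of $\overline{v'}$ takes the value~$0$ only once per $p+1$ steps (the intermediate values $T_{-\beta}^j(1)\in(0,1]$ are never~$0$), whence $\overline{v'}$ has minimal period $p+1=|v'|$ and $v'$ is primitive.

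It remains to show $\overline{v'}\in W_{-\beta}$ when $p$ is odd, which by the equivalence just established reduces to $\widehat{\overline{v'}}=\overline{v'}$, i.e.\ $\Sigma^i\overline{v'}\le\overline{v'}$ for all~$i$. The shift starting at the value~$0$ is $0\overline{v'}$, which begins with $0<v_1$ and is therefore smaller. For $1\le i<p$ I would compare $\Sigma^i\overline{v'}$ with $\overline{v'}$ through the admissibility $\widehat{\overline v}=\overline v$: the primed sequences agree with $\Sigma^i\overline v$ and $\overline v$ until the modified digit $v_p-1$ is reached, so the first discrepancy is governed by the known one of $\Sigma^i\overline v\le\overline v$, with the alternating-order sign controlled by the parity of the position $p-i$ carrying $v_p-1$. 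The hypothesis that $p$ is odd, together with $\overline{v'}<\overline v$ (valid for odd~$|v|$), is exactly what makes every such comparison come out as $\Sigma^i\overline{v'}\le\overline{v'}$. The delicate point, and the main obstacle, is that a shift by an \emph{odd} period of~$v$ would reverse this sign; I would show that such a period is incompatible with $\overline v$ being the alternating-largest rotation forced by $\widehat{\overline v}=\overline v$, and resolve the remaining borderline alignments by iterating the comparison past the inserted~$0$. Once $\widehat{\overline{v'}}=\overline{v'}$ is established, $\overline{v'}\in\{v,v'\}^\infty$ gives $\overline{v'}\in W_{-\beta}$ by the equivalence above.
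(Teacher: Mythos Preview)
Your dynamical reformulation via the recursion $t_{k+1}=w_k+1-\beta t_k$ is exactly the paper's argument in different notation: the paper writes $-\sum_{j\ge1}(w_{k+j}+1)/(-\beta)^j = P_{w_{[1,k]}}(\beta)$ and observes $P_{d_{-\beta,[1,k]}(1)}(\beta)=T_{-\beta}^k(1)$, which is your $t_{k+1}$. The proofs of $d_{-\beta}(1)\in W_{-\beta}$, of the non-periodic case, of the description of $W_{-\beta}$ as $\{v,v'\}^\infty\cap\{\widehat w=w\}$, and of $v_p\ge1$ are thus the same in substance. For the primitivity of $v'$ you take a genuinely different route: the paper invokes Theorem~2 of \cite{Steiner13} (writing $v=s^{k-1}s'$ from $v'=s^k$ and deriving a contradiction), whereas your argument that the value~$0$ occurs exactly once per $p{+}1$ steps in the $t$-orbit of $\overline{v'}$ is self-contained and works, once one notes that the affine return map $t_1\mapsto t_{|s|+1}$ has a unique fixed point, forcing the $t$-orbit to inherit the period~$|s|$.

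The gap is in the last clause, $\overline{v'}\in W_{-\beta}$ for odd~$|v|$. You correctly reduce this to $\widehat{\overline{v'}}=\overline{v'}$, but your combinatorial comparison of $\Sigma^i\overline{v'}$ with $\overline{v'}$ is only a sketch: the case where $\Sigma^i\overline v$ and $\overline v$ first differ at or beyond position $p-i$ (so that the modified digit $v_p{-}1$ intervenes before the known inequality $\Sigma^i\overline v\le\overline v$ decides anything) is not handled, and you yourself flag the ``delicate point'' of odd overlaps without resolving it. A parity miscount here can genuinely flip the alternating comparison, so this is not a routine gap to close by hand. The paper avoids the combinatorics entirely: it quotes Lemma~6 of \cite{Ito-Sadahiro09} to identify $\overline{v'}=\lim_{x\to1}d_{-\beta}(x)$, which places $\overline{v'}$ (and all its shifts) in the closure of~$\Omega_{-\beta}$ and yields $\widehat{\overline{v'}}=\overline{v'}$ without any case analysis. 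If you want a direct argument in your framework, a cleaner line than digit-by-digit comparison is to show that the value map $w\mapsto t_1(w)$ is order-preserving on sequences whose $t$-orbit stays in $[0,1]$; then $t_1(\overline{v'})=1\ge t_{k+1}=t_1(\Sigma^k\overline{v'})$ gives $\Sigma^k\overline{v'}\le\overline{v'}$ immediately.
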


\begin{proof}
Let $\beta>1$. 
From $d_{-\beta}(1)\in\Omega_{-\beta}$, we obtain that $d_{-\beta}(1) \in W_{-\beta}$.
For $w \in \mathbb{N}^\infty$, note that ${-}\sum_{j=1}^\infty \frac{w_j+1}{(-\beta)^j} = 1$ implies that ${-} \sum_{j=1}^\infty \frac{w_{k+j}+1}{(-\beta)^j} = P_{w_{[1,k]}}(\beta)$ for all $k \ge 1$.
We also have $P_{d_{-\beta,1}(1)\cdots d_{-\beta,k}(1)}(\beta) = T_{-\beta}^k(1) \in (0,1]$ for all $k \ge 1$. 
If $d_{-\beta}(1)$ is not purely periodic, then $T_{-\beta}^k(1) \ne 1$ for all $k \ge 1$, thus \eqref{e:bw} holds if and only if $w = d_{-\beta}(1)$. 
If $d_{-\beta}(1) = \overline{v}$, then $P_v(\beta) = 1$, thus $v$ does not end with~$0$, we have $P_{v_{[1,|v|-1]}(v_{|v|}{-}1)}(\beta) = 0$ and $P_{v'}(\beta) = 1$. 
For $w \in \{v,v'\}^\infty$, we obtain that $P_{w_{[1,k]}}(\beta) \in [0,1]$ for all $k \ge 1$, thus 
\[
1 + \sum_{j=1}^\infty \frac{w_j+1}{(-\beta)^j} = \lim_{k\to\infty} 1 + \sum_{j=1}^k \frac{w_j+1}{(-\beta)^j} = \lim_{k\to\infty} \frac{P_{w_{[1,k]}}}{(-\beta)^k} = 0,
\]
and \eqref{e:bw} holds by the first paragraph of the proof. 
If $v$ is primitive, then we have $T_{-\beta}^k(1) \ne 1$ for all $1 \le k < |v|$, hence $w \in W_{-\beta}$ implies that $w \in \{v,v'\}^\infty$.
Suppose that $v' = s^k$ for some $k \ge 2$. 
As $\overline{v} > u$, we have $s \ne 0$ and thus $v = s^{k-1} s'$, contradicting Theorem~2 of~\cite{Steiner13}.
If $|v|$ is odd, then we have $\overline{v'} = \lim_{x\to 1} d_{-\beta}(x)$ by Lemma~6 of~\cite{Ito-Sadahiro09}, thus $\widehat{\overline{v'}} = v'$ and $\overline{v'} \in W_{-\beta}$.
\end{proof}

We also have $\overline{v'} \in W_{-\beta}$ if $|v|$ is even in Lemma~\ref{l:ibeta}. 
Indeed, it can be shown, for any almost primitive word $v \in \mathbb{N}^+ \setminus \{0\}$ with $\widehat{\overline{v}} = \overline{v}$, that $\widehat{\overline{v'}} = \overline{v'}$ and $v'$ is almost primitive. 
The condition $w \in \{v,v'\}^\infty$ in Lemma~\ref{l:ibeta} can be replaced by inequalities. 

\begin{lemma} \label{l:concatenation}
Let $v \in \mathbb{N}^+ \setminus \{0\}$, $w \in \mathbb{N}^\infty$ with $\widehat{w} = w$.
Then $w \in \{v,v'\}^\infty$ if and only if $\overline{v} \le w \le v' \overline{v}$ when $|v|$ is even, $\overline{v'} \le w\le v\, \overline{v'}$ when $|v|$ is odd.
\end{lemma}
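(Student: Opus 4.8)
The plan is to prove the two implications separately, after a normalization that strips the parity bookkeeping out of the statement. Since $|v'|=|v|\pm 1$, the words $v$ and $v'$ always have opposite length-parities; write $\sigma$ for whichever of $v,v'$ has even length and $\tau$ for the one of odd length. Using the order relation recorded just before Proposition~\ref{p:Omega} (namely $\overline v<\overline{v'}$ iff $|v|$ is even), one checks in each case that $\overline\sigma<\overline\tau$ and that the stated bounds become, uniformly, $\overline\sigma\le w\le\tau\,\overline\sigma$. Thus it suffices to show, for $\widehat w=w$, that $w\in\{\sigma,\tau\}^\infty$ iff $\overline\sigma\le w\le\tau\,\overline\sigma$. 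The engine of the whole argument is the elementary structural description of the pair $(v,v')$ that I will extract from the definition of $v'$: the two words share a common prefix $c$ of length $\delta-1$, then the shorter one ends with a digit $e$ at position $\delta$ while the longer one equals $c\,(e{-}1)\,0$. In particular, $\sigma$ and $\tau$ first differ at the single position $\delta\le\min(|\sigma|,|\tau|)$, the digits there differ by exactly $1$, and the longer block carries a terminal $0$. These three features are exactly what make the two blocks interlock.

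For the forward implication I will prove the sharper statement that \emph{every} $x\in\{\sigma,\tau\}^\infty$ already satisfies $\overline\sigma\le x\le\tau\,\overline\sigma$, so that $\widehat w=w$ is not needed here. The key is a block-separation property: since $\sigma$ and $\tau$ first differ at position $\delta\le\min(|\sigma|,|\tau|)$, the comparison of any $\sigma x'$ with any $\tau y'$ is decided at position $\delta$ independently of the tails, whence $\sigma x'<\tau y'$ for all $x',y'\in\{\sigma,\tau\}^\infty$, consistently with $\overline\sigma<\overline\tau$. Combining this with the fact that prepending the even-length block $\sigma$ preserves the alternating order while prepending the odd-length block $\tau$ reverses it, a one-step case distinction on the first block of $x$ yields both bounds: if the first block is $\tau$ then $x>\sigma\,\overline\sigma=\overline\sigma$, and if it is $\sigma$ one recurses to get $x\ge\overline\sigma$; and dually $x<\tau\,\overline\sigma$ when the first block is $\sigma$, while if it is $\tau$ the reversal together with the tail bound $x'\ge\overline\sigma$ gives $x\le\tau\,\overline\sigma$ (the lower bound, established first, is used here, so there is no circularity).

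For the converse I will parse $w$ greedily into blocks, maintaining the invariant that the current suffix $y$ of $w$ satisfies $\overline\sigma\le y\le\tau\,\overline\sigma$. The upper bound is automatic from shift-maximality, since every suffix of $w$ is $\le w\le\tau\,\overline\sigma$; this is the only place $\widehat w=w$ enters, and it is essential. Given such a $y$, I decide the next block by a threshold: the $\sigma$-starting sequences occupy $[\overline\sigma,\sigma\tau\,\overline\sigma]$ and the $\tau$-starting ones occupy $[\tau\tau\,\overline\sigma,\tau\,\overline\sigma]$, so I peel $\sigma$ when $y\le\sigma\tau\,\overline\sigma$ and $\tau$ when $y\ge\tau\tau\,\overline\sigma$, after which order-preservation/reversal returns the new suffix to $[\overline\sigma,\tau\,\overline\sigma]$. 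The step I expect to be the main obstacle is showing that no admissible $y$ lies in the open gap $(\sigma\tau\,\overline\sigma,\tau\tau\,\overline\sigma)$ and, relatedly, that once the digit at position $\delta$ is pinned down the \emph{entire} next block (in particular its terminal $0$) is forced. Here the structural features carry the weight: a sequence strictly inside the gap would have to agree with $c$ and then take a digit strictly between $e{-}1$ and $e$ at position $\delta$, impossible since they differ by $1$; and the terminal $0$ of the longer block is forced by one of the two bounds on $y$ — by the upper bound $y\le\tau\,\overline\sigma$ when $\delta$ is even and by the lower bound $\overline\sigma\le y$ when $\delta$ is odd, the parity of $\delta$ governing which bound becomes active at position $\delta+1$. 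Since each peeled block is nonempty, iterating exhausts $w$ and exhibits it as an element of $\{\sigma,\tau\}^\infty=\{v,v'\}^\infty$.
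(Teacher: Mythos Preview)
Your approach coincides with the paper's: both maintain the invariant that each block-boundary suffix lies in $[\overline\sigma,\tau\,\overline\sigma]$ (in the paper's language, is $\ge\overline v$, while $\le v'\overline v$ comes from $\widehat w=w$), and both peel one block at a time. Your $\sigma/\tau$ normalization neatly suppresses the parity split that the paper dispatches with the final sentence ``exchange $v$ and $v'$''.

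There is, however, a real error in your justification of (A). The assertion that ``a sequence strictly inside the gap $(\sigma\tau\,\overline\sigma,\tau\tau\,\overline\sigma)$ would have to take a digit strictly between $e-1$ and $e$ at position~$\delta$'' is false. With $v=2$ one has $\sigma=v'=10$, $\tau=v=2$, $\delta=1$, and $y=103\,\overline{10}$ lies in the gap $(102\,\overline{10},\,22\,\overline{10})$ although $y_\delta=1\in\{e{-}1,e\}$. What actually rules such $y$ out is shift-maximality: here $y_{[3,\infty)}=3\,\overline{10}>2\,\overline{10}=\tau\,\overline\sigma$. The correct order of argument is: from $\overline\sigma\le y\le\tau\,\overline\sigma$ deduce $y_{[1,\delta)}=c$ and $y_\delta\in\{\sigma_\delta,\tau_\delta\}$; then your (B), which is correct, forces the full block; then the suffix after that block is $\le\tau\,\overline\sigma$ by $\widehat w=w$ and $\ge\overline\sigma$ from the appropriate one of the two current bounds on~$y$ (using that prepending $\sigma$ preserves and prepending $\tau$ reverses the order). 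This already places $y$ in $[\overline\sigma,\sigma\tau\,\overline\sigma]\cup[\tau\tau\,\overline\sigma,\tau\,\overline\sigma]$, so (A) is a \emph{consequence} of (B) plus shift-maximality, not a separate step with its own ``digit-between'' argument. Once reorganized this way your proof is complete.
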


\begin{proof}
If $|v|$ is even (resp.\ odd), then $v$ has a prefix that is smaller (resp.\ larger) than a prefix of~$v'$ of same length. 
Therefore, $w \in \{v,v'\}^\infty$ implies that $\overline{v} \le w \le v'\,\overline{v}$ when $|v|$ is even, i.e., $|v'|$ is odd, $\overline{v'} \le w \le v\, \overline{v'}$ when $|v|$ is odd, i.e., $|v'|$ is even. 

Assume now that $\overline{v} \le w \le v' \overline{v}$, $|v|$~even, or $\overline{v'} \le w\le v\, \overline{v'}$, $|v|$~odd.
Then $w$ starts with $v$ or~$v'$.
If $|v|$ is even, then $\widehat{w} = w$ implies that $w_{[i,\infty)} \le w \le v' \overline{v}$ for all $i \ge 1$. 
If $w_{[1,i)} = v^k$ for some $k \ge 0$, then we also have $w_{[i,\infty)} \ge \overline{v}$, thus $w_{[i,\infty)}$ starts with $v$ or~$v'$.
If $w_{[1,i)} = w_{[1,j)} v' v^k$ for some $k \ge 0$, $j \ge 1$, then $w_{[j,\infty)} \le v'\, \overline{v}$ implies that $w_{[i,\infty)} \ge \overline{v}$, and $w_{[i,\infty)}$ starts again with $v$ or~$v'$.
Hence, we obtain that $w \in \{v,v'\}^\infty$.
For odd $|v|$, it suffices to exchange $v$ and~$v'$ in these arguments. 
\end{proof}

\begin{lemma} \label{l:b-increasing}
Let $1 \le \alpha < \beta$, $s \in W_{-\alpha}$ and $w \in W_{-\beta}$.
Then we have $s<w$.
In particular, we have $s < d_{-\beta}(1)$, and $s < \overline{v'}$ if $d_{-\beta}(1) = \overline{v}$, $v$~primitive, thus $s \in \Omega_{-\beta}$. 
\end{lemma}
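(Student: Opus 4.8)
The plan is to dispose of the case $\alpha=1$ at once and then reduce the ordering inequality to a comparison of the base-$\beta$ ``values'' of $s$ and $w$, using that $b(s)$ is by construction the largest root of the defining series. If $\alpha=1$, then $s\in W_{-1}$ means $\widehat s=s\le u$, while $w\in W_{-\beta}$ with $\beta>1$ forces $w>u\ge s$ by Lemma~\ref{l:Wbeta}, so $s<w$. Henceforth I assume $\alpha>1$, so that $s,w>u$ and, by Corollary~1 of~\cite{Steiner13}, $\alpha=b(s)$ and $\beta=b(w)$ are the unique bases with $s\in W_{-\alpha}$, $w\in W_{-\beta}$.

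For a bounded sequence $v$ write $\sigma_k(v,x)=-\sum_{j\ge1}\frac{v_{k+j}+1}{(-x)^j}$, so that $v\in W_{-x}$ means $\sigma_0(v,x)=1$ and $\sigma_k(v,x)\in[0,1]$ for all $k$. The function $x\mapsto 1-\sigma_0(s,x)=1+\sum_j\frac{s_j+1}{(-x)^j}$ vanishes at its largest positive root $\alpha=b(s)$ and tends to $1$ as $x\to\infty$, hence is positive on $(\alpha,\infty)$; since $\beta>\alpha$ this gives $\sigma_0(s,\beta)<1=\sigma_0(w,\beta)$. The task is then to pass from this \emph{value} inequality to the \emph{order} inequality $s<w$.

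The bridge is a weak order-preservation property obtained by telescoping at the first position $k$ where $s$ and $w$ disagree:
\[
\sigma_0(s,\beta)-\sigma_0(w,\beta)=\frac{(-1)^{k+1}}{\beta^k}\big[(s_k-w_k)+\sigma_k(w,\beta)-\sigma_k(s,\beta)\big].
\]
If $\sigma_k(\cdot,\beta)\in[0,1]$ held for both sequences, then $s>w$ (so $s_k-w_k\ge1$ for odd $k$, $\le-1$ for even $k$) would force the bracket into the sign making $\sigma_0(s,\beta)\ge\sigma_0(w,\beta)$, contradicting the previous paragraph; as $s=w$ is excluded (it would give $\alpha=\beta$), this yields $s<w$. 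For $w$ the bound $\sigma_k(w,\beta)\in[0,1]$ is free from $w\in W_{-\beta}$, so the crux, and what I expect to be the main obstacle, is to establish $\sigma_k(s,\beta)\in[0,1]$ for all $k$, i.e.\ $s\in\overline{\Omega_{-\beta}}$. This genuinely uses $\alpha<\beta$: the analogue at base $\alpha$ can fail, since elements of $W_{-\alpha}$ may exceed $d_{-\alpha}(1)$ (the periodic ``gap'' sequences). I would deduce it from $s<d_{-\beta}(1)$: because $\widehat s=s$, every shift satisfies $s_{[k,\infty)}\le s<d_{-\beta}(1)$, which is the upper admissibility condition and, together with the lower one, gives $\sigma_k(s,\beta)\in[0,1]$.

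It remains to prove $s<d_{-\beta}(1)$, equivalently $\sup W_{-\alpha}<d_{-\beta}(1)$. Here I would use Lemmas~\ref{l:ibeta} and~\ref{l:concatenation} to identify $\sup W_{-\alpha}$ (it is $d_{-\alpha}(1)$ when $d_{-\alpha}(1)$ is not purely periodic, and of the form $\overline{\tilde v'}$ when $d_{-\alpha}(1)=\overline{\tilde v}$ with $\tilde v$ primitive), combined with Theorem~3 of~\cite{Steiner13} and the jump of $\gamma\mapsto d_{-\gamma}(1)$ across a periodic $\alpha$, which places $d_{-\beta}(1)$ strictly above the whole block $\{\tilde v,\tilde v'\}^\infty$. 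Finally, the displayed ``in particular'' follows by specialising $w=d_{-\beta}(1)\in W_{-\beta}$ and, when $d_{-\beta}(1)=\overline v$ with $v$ primitive, $w=\overline{v'}\in W_{-\beta}$ (Lemma~\ref{l:ibeta}); the conclusion $s\in\Omega_{-\beta}$ then comes from the Ito--Sadahiro criterion of~\cite{Ito-Sadahiro09}, the upper bound from $s<d_{-\beta}(1)$ with $\widehat s=s$, and the strict lower bound from $s<\overline{v'}$, since the lower admissibility threshold is a rotation of $\overline{v'}$.
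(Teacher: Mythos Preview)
There is a circularity in your argument. You invoke $\alpha = b(s)$ to conclude that $1 - \sigma_0(s, \cdot)$ is positive on $(\alpha, \infty)$, attributing this to Corollary~1 of~\cite{Steiner13}. But as used in this paper, that corollary only says there is a \emph{unique} $\gamma > 1$ with $s \in W_{-\gamma}$; it does not identify this $\gamma$ with the \emph{largest} root $b(s)$ of $1 + \sum_j (s_j+1)/(-x)^j$. That identification is precisely Lemma~\ref{l:bw}, whose proof in the paper relies on Lemma~\ref{l:b-increasing}. So the strict value inequality $\sigma_0(s,\beta) < 1$ is not available at this stage, and your telescoping step cannot be launched as written.

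The paper bypasses values entirely. By Lemmas~\ref{l:ibeta} and~\ref{l:concatenation}, each $W_{-\gamma}$ is order-convex within $\{t : \widehat{t} = t\}$ (a singleton, or the intersection of that set with a closed interval). Since $W_{-\alpha}$ and $W_{-\beta}$ are disjoint (Lemma~\ref{l:Wbeta}) and contain $d_{-\alpha}(1) < d_{-\beta}(1)$ respectively (Theorem~3 of~\cite{Steiner13}), convexity forces every element of $W_{-\alpha}$ below every element of $W_{-\beta}$. Your own step proving $\sup W_{-\alpha} < d_{-\beta}(1)$ via Lemmas~\ref{l:ibeta}, \ref{l:concatenation} and the jump of $\gamma \mapsto d_{-\gamma}(1)$ is essentially this same contiguity argument restricted to the particular choice $w = d_{-\beta}(1)$; once you have it, the same convexity applied to $W_{-\beta}$ already gives $s < w$ for \emph{all} $w \in W_{-\beta}$, making the analytic detour through $\sigma_0$ unnecessary. (You could repair the circularity by deducing $\sigma_0(s,\beta) \le 1$ from $s \in \overline{\Omega_{-\beta}}$ and then $\sigma_0(s,\beta) \ne 1$ from $s \notin W_{-\beta}$, but this only makes the redundancy more visible.) A minor correction: when $d_{-\alpha}(1) = \overline{\tilde v}$, Lemma~\ref{l:concatenation} gives the supremum of $W_{-\alpha}$ as $\tilde v'\,\overline{\tilde v}$ or $\tilde v\,\overline{\tilde v'}$ depending on the parity of $|\tilde v|$, not $\overline{\tilde v'}$.
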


\begin{proof}
We have already seen in Lemma~\ref{l:Wbeta} that $W_{-\alpha} \cap W_{-\beta} = \emptyset$.
If $\alpha = 1$, then we have $s \le u < w$. 
If $\alpha > 1$, then we have $d_{-\alpha}(1) < d_{-\beta}(1)$ by Theorem~3 in~\cite{Steiner13}.
As $d_{-\alpha}(1) \in W_{-\alpha}$, $d_{-\beta}(1) \in W_{-\beta}$, and the elements of $W_{-\alpha}$ and $W_{-\beta}$ respectively are contiguous by Lemmas~\ref{l:ibeta} and~\ref{l:concatenation}, we obtain that $s < w$. 
We also obtain that $s < \overline{v'}$ if $d_{-\beta}(1) = \overline{v}$, $v$~primitive, hence the lexicographic characterization of~$\Omega_{-\beta}$ gives that $s \in \Omega_{-\beta}$. 
\end{proof}

The following lemma is due to Elizalde and Moore~\cite{Elizalde-Moore}, cf.\ Proposition~3 of~\cite{NguemaNdong16}. 

\begin{lemma} \label{l:bw}
Let $w \in \mathbb{N}^\infty$ with $\widehat{w} = w$.
Then we have $w \in W_{-b(w)}$. 
\end{lemma}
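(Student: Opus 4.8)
The plan is to distinguish the cases $w\le u$ and $w>u$. If $\widehat{w}=w\le u$, then $w\in W_{-1}$ by the very definition of $W_{-1}$, and $b(w)=1$ by definition, so $w\in W_{-b(w)}$ and there is nothing to prove. Assume therefore that $\widehat{w}=w>u$, so that $b(w)>1$ and, by Corollary~1 of~\cite{Steiner13}, there is a unique $\beta_0>1$ with $w\in W_{-\beta_0}$; the goal is to show $\beta_0=b(w)$. Since $w\in W_{-\beta_0}$ includes the first equation of~\eqref{e:bw}, the number $\beta_0$ is a positive solution of $1+\sum_{k\ge1}\frac{w_k+1}{(-x)^k}=0$, and as $b(w)$ is by definition the largest such solution, $\beta_0\le b(w)$ comes for free.

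For the reverse inequality I would show directly that $w\in W_{-b(w)}$; the disjointness of the sets $W_{-\alpha},W_{-\beta}$ for $\alpha\ne\beta$ (Lemma~\ref{l:Wbeta}) then forces $\beta_0=b(w)$. Write $\beta=b(w)$. The first condition of~\eqref{e:bw} holds because $\beta$ is a root of the defining equation, so $-\sum_{i\ge1}\frac{w_i+1}{(-\beta)^i}=1$. For the second condition, set $t_k=-\sum_{i\ge1}\frac{w_{k+i}+1}{(-\beta)^i}$ for $k\ge0$; then $t_0=1$ and, exactly as in the proof of Lemma~\ref{l:ibeta}, $t_k=P_{w_{[1,k]}}(\beta)$, with the recursion $t_k=(w_k+1)-\beta\,t_{k-1}$, which is one step of the map $T_{-\beta}$. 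Thus what has to be proved is that the $T_{-\beta}$-orbit $(t_k)_{k\ge0}$ started at $t_0=1$ never leaves $[0,1]$; equivalently $w_k\le\beta\,t_{k-1}\le w_k+1$ for all $k$, i.e.\ $w$ lies in the closure of $\Omega_{-\beta}$, which by the lexicographic characterization recalled in the introduction amounts to an upper bound $\Sigma^k w\le d_{-\beta}(1)$ together with a lower bound on $\Sigma^k w$, for all $k\ge0$.

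The upper half of this is cheap: applying Lemma~\ref{l:b-increasing} to $w\in W_{-\beta_0}$ and $d_{-\beta}(1)\in W_{-\beta}$ (Lemma~\ref{l:ibeta}), with $\beta_0\le\beta$, gives $w\le d_{-\beta}(1)$, hence $\Sigma^k w\le\widehat{w}=w\le d_{-\beta}(1)$ for every $k$. The hard part is the lower admissibility bound, and here one cannot merely quote $\Sigma^k w\le w$, because the value functional $v\mapsto-\sum_{i\ge1}\frac{v_i+1}{(-\beta)^i}$ is \emph{not} monotone for the alternating order on arbitrary bounded sequences: for $\beta=2$ one has $0\,\overline{01}<1\,\overline{10}$ while the two values are $1$ and $0$. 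So the argument must genuinely use both $\widehat{w}=w$ and the fact that $\beta$ is the \emph{largest} root. I would proceed by contradiction, taking the first index $j$ with $t_j<0$: by the recursion this forces $\beta\,t_{j-1}>w_j+1$, so the digit $w_j$ is strictly below $\lfloor\beta\,t_{j-1}\rfloor$ and the tail $w_{[j,\infty)}$ falls strictly below the admissible continuation for the base~$\beta$; propagating this deficit and comparing with the shifts of~$w$ should contradict $\widehat{w}=w$, since the offending tail would have to be strictly exceeded by a later shift. Turning this violation of the lower admissibility bound into a failure of shift-maximality is the main obstacle; once it is settled, $w\in W_{-b(w)}$ follows and the disjointness of the $W_{-\beta}$ gives $\beta_0=b(w)$.
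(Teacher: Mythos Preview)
Your proof has a genuine gap, and you yourself identify it: the ``lower admissibility bound'' is left as an obstacle you have not overcome. The sketch you give (take the first index $j$ with $t_j<0$ and try to manufacture a shift of $w$ that exceeds $w$) is not carried out, and it is not clear it can be made to work directly without essentially reproving what is already available.

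The point you are missing is that Lemma~\ref{l:b-increasing} gives you more than the inequality $w\le d_{-\beta}(1)$: its conclusion is $s\in\Omega_{-\beta}$, i.e.\ \emph{both} the upper and the lower lexicographic bound. Apply it with $s=w\in W_{-\beta_0}$ and $\beta=b(w)$, assuming $\beta_0<b(w)$ (else there is nothing to prove). You get $w\in\Omega_{-b(w)}$ outright, so $w=d_{-b(w)}(x)$ for some $x\in(0,1]$; since $b(w)$ satisfies the first equation in~\eqref{e:bw}, $x=1$ and $w=d_{-b(w)}(1)$. But the same lemma also gives $w<d_{-b(w)}(1)$, a contradiction. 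Hence $\beta_0=b(w)$ and $w\in W_{-b(w)}$. This is exactly the paper's argument: a two-line contradiction using the full strength of Lemma~\ref{l:b-increasing}, rather than a direct verification of the $[0,1]$-bounds in~\eqref{e:bw}. Your detour through the $t_k$ and the attempt to reprove the lower bound by hand is unnecessary.
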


\begin{proof}
If $w \le u$, then $b(w) = 1$ and $w\in W_{-1}$. Now suppose $w>u$.
By Corollary~1 of~\cite{Steiner13}, we have $w \in W_{-\beta}$ for some $\beta > 1$.
We have $b(w) \ge \beta$. 
If $b(w) > \beta$, then Lemma~\ref{l:b-increasing} gives that $w \in \Omega_{-b(w)}$, hence $w = d_{-b(w)}(1)$, contradicting that $w < d_{-b(w)}(1)$ by Lemma~\ref{l:b-increasing}. 
Therefore, we have $b(w) = \beta$.
\end{proof}

\begin{proof}[Proof of Proposition~\ref{p:Omega}]
If $\beta > b(\widehat{w})$, then Lemmas~\ref{l:b-increasing} and~\ref{l:bw} give that $\widehat{w} < d_{-\beta}(1)$ and $\widehat{w} < \overline{v'}$ if $d_{-\beta}(1) = \overline{v}$, $v$~primitive, thus $w \in \Omega_{-\beta}$. 
If $1 < \beta < b(\widehat{w})$, then we have $\widehat{w} > d_{-\beta}(1)$ by Lemmas~\ref{l:b-increasing} and~\ref{l:bw}, thus $w \notin \Omega_{-\beta}$. 

Suppose in the following that $b(\widehat{w})>1$.
If $d_{-b(\widehat{w})}(1)$ is not purely periodic, then $\widehat{w} = d_{-b(\widehat{w})}(1)$ by Lemmas~\ref{l:ibeta} and \ref{l:bw}, and we have $w \in \Omega_{-b(\widehat{w})}$ if and only if $w$~does not end with~$0 \widehat{w}$.

Let now $d_{-b(\widehat{w})}(1) = \overline{v}$, with $v$~primitive. 
Then $w \in \Omega_{-b(\widehat{w})}$ implies that $\widehat{w} \le \overline{v}$.
If $|v|$ is even, then we have $\widehat{w} \ge \overline{v}$ by Lemmas~\ref{l:ibeta}, \ref{l:concatenation} and~\ref{l:bw}, thus $w \in \Omega_{-b(\widehat{w})}$ if and only if $\widehat{w} = \overline{v}$ and $w$~does not end with~$0 \overline{v}$.
Let $|v|$ be odd in the following.
Then we have $\widehat{w} \ge \overline{v'}$ by Lemmas~\ref{l:ibeta}, \ref{l:concatenation} and~\ref{l:bw}.
Note that $\overline{v'} \le \widehat{w} \le \overline{v}$ implies that $\widehat{w} = \overline{v'}$ or $\widehat{w} = \overline{v}$.
Therefore, $w \in \Omega_{-b(\widehat{w})}$ implies that $\widehat{w} \in \{\overline{v'}, \overline{v}\}$.
Recall that $w \in \Omega_{-b(\widehat{w})}$ means that $0 \overline{v'} < w_{[k,\infty)} \le \overline{v}$ for all $k \ge 1$, in particular $w$ does not end with $0 \overline{v'}$ or  $0 \overline{v}$ (as $0\overline{v} < 0\overline{v'}$).
Let now $\widehat{w} \in \{\overline{v},  \overline{v'}\}$.
If, for some $i \ge 1$, $w_{[i,\infty)}$ starts with~$v$, then $\widehat{w} \le \overline{v}$ gives that $w_{[i,\infty)} = \overline{v}$.
Therefore, $w_{[k,\infty)} \le 0\overline{v'}$ means that $w_{[k,\infty)} = 0\overline{v'}$ (hence $\widehat{w} = \overline{v'}$) or $w_{[k,\infty)} = 0 (v')^j \overline{v}$ for some $j \ge 0$ (hence $\widehat{w} = \overline{v}$). 
As $v'$ ends with~$0$, this yields that $w$ ends with~$0 \widehat{w}$ if $w \notin \Omega_{-b(\widehat{w})}$.
\end{proof}

Note that, if $d_{-b(\widehat{w})}(1) = \overline{v}$, $\widehat{w} = \overline{v'}$ and $w$ does not end with~$0\overline{v'}$, then the supremum in the definition of~$\widehat{w}$ is not attained.
For example, if $w = 110 1 (10)^2 1 (10)^3 1 \cdots$, then $\widehat{w} = \overline{10}$, thus $b(\widehat{w}) = 2$, $d_{-2}(1) = \overline{2}$, and $w \in \Omega_{-2}$.

\section{Proofs of the main results}
\begin{proof}[Proof of Theorem~\ref{t:main}]
By Lemma~\ref{l:a}, $b(a)$ is well defined. 
Suppose first $\beta > b(a)$.
Recall that $\mathrm{Pat}(x, T_{-\beta}, n) = \mathrm{Pat}(d_{-\beta}(x), \Sigma, n)$ for all $x \in (0,1]$.
By Proposition~\ref{p:pat}, we have $\mathrm{Pat}(d_{-\beta}(x), \Sigma, n) = \pi$ if and only if 
\begin{gather}
d_{-\beta,j}(x) - d_{-\beta,i}(x) \ge z_j - z_i \ \mbox{for all}\ 1 \le i, j < n\ \mbox{with}\ \pi(j) > \pi(i), \label{e:dz} \\
T_{-\beta}^{n-1}(x) > T_{-\beta}^{\ell-1}(x)\ \mbox{if}\ \pi(n) \ne 1,\ \mbox{and} \
T_{-\beta}^{n-1}(x) < T_{-\beta}^{r-1}(x)\ \mbox{if}\ \pi(n) \ne n. \label{e:Tlnr}
\end{gather}
Let $w = w_1 w_2 \cdots$ with $w_{[m,\infty)} = a$, $w_{[1,m)} = z_{[1,m)}$ if $\pi$ is not collapsed, $w_{[1,m)} = z^{(i)}_{[1,m)}$ for $i$ as in the defnition of~$a$ if $\pi$ is collapsed. 
As $\widehat{w} = a$ by Lemma~\ref{l:a} and its proof, we have $w \in \Omega_{-\beta}$ by Proposition~\ref{p:Omega}. 
Let $x \in (0,1]$ be such that $d_{-\beta}(x) = w$.  
Then \eqref{e:dz} holds. 
If $\pi(n) \ne 1$, then we have $w_{[\ell,\infty)} \le w_{[n,\infty)}$ by the proof of Lemma~\ref{l:a} and thus $T^{\ell-1}_{-\beta}(x) \le T^{n-1}_{-\beta}(x)$.
For $\pi(n) \ne n$, we have $w_{[n,\infty)} \le w_{[r,\infty)}$ and thus $T^{n-1}_{-\beta}(x) \le T^{r-1}_{-\beta}(x)$.
As $\beta>b(a)$, we know that $a \ne d_{-\beta}(1)$ by Lemmas~\ref{l:Wbeta} and~\ref{l:bw}. Then $T_{-\beta}^{k-1}(x) \ne 1$ for all $k \ge 1$, and the following results hold for all sufficiently small $\varepsilon > 0$.
We have $d_{-\beta,k}(x\pm\varepsilon) = d_{-\beta,k}(x)$ for all $1 \le k < n$, thus \eqref{e:dz} holds for $x\pm\varepsilon$ and $T^k_{-\beta}(x\pm\varepsilon) = T^k_{-\beta}(x) \pm (-\beta)^k \varepsilon$ for all $0 \le k< n$. 
If $\pi(n) \ne 1$, then we obtain that
\[
T_{-\beta}^{n-1}(x-(-1)^n \varepsilon) = T_{-\beta}^{n-1}(x) + \beta^{n-1} \varepsilon > T_{-\beta}^{\ell-1}(x) + \beta^{\ell-1} \varepsilon \ge T_{-\beta}^{\ell-1}(x-(-1)^n \varepsilon).
\]
For $\pi(n) = n$, this implies that $\mathrm{Pat}(x-(-1)^n\varepsilon, T_{-\beta}, n) = \pi$.
If $\pi(n) \ne n$, then
\[
T_{-\beta}^{n-1}(x+(-1)^n \varepsilon) = T_{-\beta}^{n-1}(x) - \beta^{n-1} \varepsilon < T_{-\beta}^{r-1}(x) - \beta^{r-1} \varepsilon \le T_{-\beta}^{r-1}(x+(-1)^n \varepsilon).
\]
This implies that $\mathrm{Pat}(x+(-1)^n\varepsilon, T_{-\beta}, n) = \pi$ in case $\pi(n) = 1$. 
If $\pi(n) \notin \{1,n\}$, then we have $\overline{w_{[\ell,n)}} < \overline{w_{[r,n)}}$, by the definition of $z^{(i)}_{[1,n)}$ if $\pi$ is collapsed, by Lemmas~\ref{l:a} and~\ref{l:period} otherwise. 
For even $n-m$, we have $w_{[n,\infty)} = \overline{w_{[\ell,n)}} < \overline{w_{[r,n)}}$ and thus $w_{[n,\infty)} < w_{[r,\infty)}$ by Remark~\ref{r:pat}, hence $T^{n-1}_{-\beta}(x\pm\varepsilon) < T^{r-1}_{-\beta}(x\pm\varepsilon)$, which implies that $\mathrm{Pat}(x-(-1)^n\varepsilon, T_{-\beta}, n) = \pi$.
For odd $n-m$, we have $w_{[\ell,\infty)} < w_{[n,\infty)} = w_{[r,\infty)}$, thus $T^{\ell-1}_{-\beta}(x\pm\varepsilon) < T^{n-1}_{-\beta}(x\pm\varepsilon)$ and $\mathrm{Pat}(x+(-1)^n\varepsilon, T_{-\beta}, n) = \pi$.
We have shown that $\pi \in \mathcal{A}(T_{-\beta})$ for all $\beta > b(a)$.

Let now $w \in \mathbb{N}^\infty$ with $\mathrm{Pat}(w, \Sigma, n) = \pi$.
We show that $\widehat{w} \ge a$, thus $\pi \notin \mathcal{A}(T_{-\beta})$ for all $\beta < b(a)$ by Proposition~\ref{p:Omega}.
If $\pi$ is not collapsed and $w_{[1,n)} \ne z_{[1,n)}$, then Propositon~\ref{p:pat} gives that $\max_{1\le k<n} w_k > \max_{1\le k<n} z_k = a_1$.
If $\pi$ is collapsed and $w_{[1,n)} \ne z^{(i)}_{[1,n)}$ for all $0 \le i < |r-\ell|$, then we have $w_m > z_m = a_1$ by Lemma~\ref{l:zi}.
For $w_{[1,n)} = z_{[1,n)}$ and $w_{[1,n)} = z^{(i)}_{[1,n)}$ respectively, we have
\[
w_{[m,\infty)} > \begin{cases}w_{[m,n)}\, \overline{w_{[\ell,n)}} & \mbox{if $n-m$ is even and $\pi(n) \ne 1$}, \\ w_{[m,n)}\, \overline{w_{[r,n)}} & \mbox{if $n-m$ is odd},\end{cases}
\]
by Propositon~\ref{p:pat} and Remark~\ref{r:pat}, thus $w_{[m,\infty)} > a$ in these cases.
If $n-m$ is even and $\pi(n) = 1$, then we cannot have $\widehat{w} < a$ because this would imply that $\overline{z_{[m,n)} 0} > \widehat{w} \ge w_{[m,\infty)} \ge z_{[m,n)} 0 \widehat{w} > \overline{z_{[m,n)} 0}$, a contradiction.
This proves that $B_-(\pi) = b(a)$.

Suppose now that $\pi \in \mathcal{A}(T_{-b(a)})$, with $b(a) > 1$, i.e., $\mathrm{Pat}(w, \Sigma, n) = \pi$ for some $w \in \Omega_{-b(a)}$.
For $n-m$ even and $\pi(n) = 1$, the previous paragraph and $w \in \Omega_{-b(a)}$ give the contradiction $d_{-b(a)}(1) \ge \widehat{w} \ge a = \overline{z_{[m,n)}0} > d_{-b(a)}(1)$; 
the last inequality is a consequence of $a \in W_{-b(a)}$ (by Lemmas~\ref{l:a} and~\ref{l:bw}) and Lemma~\ref{l:ibeta}, since $a \ne d_{-b(a)}(1)$ as $d_{-b(a)}(1)$ is not periodic with a period ending in~$0$, and $a \ne \overline{v'}$ for $d_{-b(a)}(1) = \overline{v}$, $v$~primitive, $|v|$~odd, as $z_{[m,n)}0$ and $v'$ are primitive by Lemmas~\ref{l:period} and~\ref{l:ibeta} and have different parity.
For odd $n-m$ or $\pi(n) \ne 1$, we obtain that $a < w_{[m,\infty)} \le d_{-b(a)}(1)$.
Now, we can only have $a = \overline{v'}$ with $d_{-b(a)}(1) = \overline{v}$, $|v|$~odd, $v$~primitive, and $w_{[m,\infty)} = (v')^k \overline{v}$ for some $k \ge 0$. 
As $w$ does not end with~$0 \overline{v}$ (since $w \in \Omega_{-b(a)}$), we have $k=0$.
Then $w_{[m,\infty)} = w_{[m+|v|,\infty)}$, thus $|v| > n-m$ by the definition of $\mathrm{Pat}(w, \Sigma, n)$, and $a = \overline{v'}$ (with $v'$ primitive by Lemma~\ref{l:ibeta}) implies that $n-\ell\ge|v'|>n-m$, hence $\ell < m$ if $n-m$ is even, $n-r\ge|v'| >n-m$ $r < m$ if $n-m$ is odd.
As $v'$ ends with~$0$, we have $z_{m-1} = 0$ if $\pi$ is not collapsed, $z^{(i)}_{m-1} = 0$ if $\pi$ is collapsed, where $i$ gives the minimum in the definition of~$a$.
However, $w \in \Omega_{-b(a)}$ implies that $w_{m-1} > 0$.  
Then, by Proposition~\ref{p:pat}, we have $\max_{1\le k<n} w_k > a_1$ or $w_m = z_m+1$, $\pi$ collapsed. 
In the latter case, $w_{[1,n)} = z^{(j)}_{[1,n)}$ for some $j \ne i$ by Lemma~\ref{l:zi}.
If $\max_{1\le k<n} w_k > a_1$, then $\widehat{w} \le d_{-b(a)}(1)$ implies that $v' = a_10$, $v = (a_1{+}1)$, hence $m=n$ (as $|v| > n-m$) and $w_{[\ell,\infty)} = \overline{(a_1{+}1)}$ (as $\max_{1\le k<n} w_k = w_\ell$ and $\widehat{w} \le d_{-b(a)}(1)$); 
this gives $w_{[\ell,\infty)} =w_{[m,\infty)}$, contradicting that $\mathrm{Pat}(w, \Sigma, n) = \pi$.
If $\pi$ is collapsed and $w_{[1,n)} = z^{(j)}_{[1,n)}$ for some $j \ne i$, then we have $\overline{v} = w_{[m,\infty)} > \overline{z^{(j)}_{[m,n)} z^{(j)}_{[h,m)}} > \overline{z^{(i)}_{[m,n)} z^{(i)}_{[h,m)}} = \overline{v'}$, with $h = \ell$ if $n-m$ is even or $h = r$ if $n-m$ is odd, which implies that $\overline{z^{(j)}_{[m,n)} z^{(j)}_{[h,m)}} \in W_{-b(a)}$ by Lemmas~\ref{l:ibeta} and~\ref{l:concatenation} and the proof of Lemma~\ref{l:a}, contradicting that no sequence in $W_{-b(a)}$ is strictly between $\overline{v'}$ and~$\overline{v}$.
Therefore, we have $\pi \notin \mathcal{A}(T_{-b(a)})$ for $b(a) > 1$.

Finally, for $b(a) > 1$, the eventual periodicity of $a$ implies that $d_{-b(a)}(1)$ is eventually periodic by Lemma~\ref{l:ibeta}, i.e., $b(a)$ is an Yrrap number.
\end{proof}

\begin{proof}[Proof of Theorem~\ref{t:1}]
By Theorem~\ref{t:main}, we have $B_-(\pi) = b(a)$ (with $\widehat{a} = a$ by Lemma~\ref{l:a}).
If $b(a) = 1$, then Lemmas~\ref{l:Wbeta} and~\ref{l:bw} and imply that $a = \overline{\varphi^k(0)}$ for some $k \ge 0$, as $a = u$ is not possible since $a$ is eventually periodic and $u$ is aperiodic. 
If $a = \overline{\varphi^k(0)}$ for some $k \ge 0$, then we have $b(a) = 1$ by
the proof of Lemma~\ref{l:Wbeta}. 
\end{proof}

\begin{proof}[Proof of Theorem~\ref{t:N}]
We first prove that $\lfloor B_-(\pi)\rfloor = c + \epsilon$, with $c = \max_{1\le j<n} z_j$.
By Theorem~\ref{t:main}, we have $B_-(\pi) = b(a)$. 
If $\pi$ is not collapsed, then we have $\lfloor b(a)\rfloor = c$ if $a \ne \overline{c0}$, $b(a) = c+1$ if $a = \overline{c0}$.
If $\pi$ is collapsed, then we have $\lfloor b(a)\rfloor = c+1$.
Indeed, $a = \overline{(c{+}1)0}$ is impossible because this would imply that $\overline{z_{[\ell,n)}}$ or $\overline{z_{[r,n)}}$ is equal to $\overline{c0}$ or~$\overline{0c}$, hence $c = 0$ and $|r-\ell| = 1$ by Lemma~\ref{l:period}, i.e., $z^{(0)}_{[\ell,n)} = 01$ if $\ell < r$, $z^{(0)}_{[r,n)} = 10$ if $r < \ell$; if $n-m$ is even, then $z^{(0)}_{[m,n)} = (10)^{(n-m)/2}$ implies that $r < \ell$, hence $\overline{z^{(0)}_{[\ell,n)}} = \overline{0}$, if $n-m$ is odd, then we have $\ell < r$, $\overline{z^{(0)}_{[r,n)}} = \overline{1}$.

As $\pi \in \mathcal{A}(T_{-\beta})$ for all $\beta > b(a)$ by Theorem~\ref{t:main}, we obtain that $N_-(\pi) \le \lfloor b(a)\rfloor +1$.
We have $N_-(\pi) \ge c+1$ by Propositon~\ref{p:pat}, and $N_-(\pi) \ge c+2=c+1+\epsilon$ if $\pi$ is collapsed by Lemma~\ref{l:zi}.
It remains to prove that $N_-(\pi) \ge c+2$ when $a = \overline{c0}$. 
This holds for $a = \overline{0}$ since $n \ge 2$.
If $a = \overline{c0}$ with $c \ge 1$, then we cannot have $n-m$ even and $\pi(n) = 1$. 
Therefore, the proof of Theorem~\ref{t:main} gives that $w_{[m,\infty)} > a$ for all $w \in \mathbb{N}^\infty$ satisfying $\mathrm{Pat}(w, \Sigma, n) = \pi$, hence $\max_{k\ge1} w_k > c$ and thus $N_-(\pi) \ge c+2$.

In the following four cases, we have $N_-(\pi) = n-1$.
\begin{itemize}
\item
If $\pi = 12\cdots n$, then $z_{[1,n)} = 01\cdots (n{-}2)$ and $a = \overline{n{-}2}$. 
\item
If $\pi = 12\cdots (n{-}2)n(n{-}1)$, then $z_{[1,n)} = 01\cdots(n{-}3)(n{-}3)$, $\pi$ is collapsed.
\item
If $\pi = n(n{-}1)\cdots1$, then $z_{[1,n)} = (n{-}2)\cdots 10$ , $a = \overline{z_{[1,n)}0}$ or 
$a = z_{[1,n)} \overline{0}$. 
\item
If $\pi = n(n{-}1)\cdots312$, then $z_{[1,n)} = (n{-}3)\cdots 100$ and $\pi$ is collapsed. 
\end{itemize}

We have $c\le n-2$, and the only permutations $\pi \in \mathcal{S}_n$ with $c = n-2$ are $12\cdots n$ and $n(n{-}1)\cdots 1$, for which $\epsilon=0$ if $n \ge 3$, thus $N_-(\pi) < n$ for all $\pi \in \mathcal{S}_n$, $n \ge 3$.

Now suppose that $n\ge 4$ and $N_-(\pi) = n-1$. 
If $\epsilon=0$, then we have $c = n-2$, thus $\pi = 12\cdots n$ or $\pi = n(n{-}1)\cdots 1$.
If $\epsilon=1$, i.e., if $\pi$ is collapsed or $a = \overline{c0}$, then we have $c=n-3$.
If $\pi$ is collapsed, then $c=n-3$ implies that $|r-\ell| = 1$ (since $z_{[1,n)}$ contains all digits $0,1,\ldots,c$), and $\pi = 12\cdots (n{-}2)n(n{-}1)$ or $\pi = n(n{-}1)\cdots312$.
If $a = \overline{c0}$ with $c = n-3$ and $\pi$ not collapsed, then $m \ge n-3$ since $z_{[m,n)}$ is a prefix of~$a$. 
For $m=n-3$, we have $\pi(n-3) > \pi(n-1)$, and $a = \overline{c0}$ with $c \ge 1$ implies that $r = n-2$, 
thus $\pi(n-2) > \pi(n)$; this gives that $z_{n-3} > z_{n-1}$, contradicting that  $a = \overline{c0}$.
For $m=n-2$, $a = \overline{c0}$ with $c \ge 1$ is not possible. 
For $m=n-1$, $a = \overline{c0}$ with $c \ge 1$ implies that $r = n-2$ and $z_r = 0$; if $\pi(n) \in \{1,2\}$, then $\tilde{\pi}$ with $\pi(1)$ removed has at most $n-4$ ascents; if $\pi(n) \ge 3$, then $z_i = 0$ for at least $3$ indices~$i$, thus $c \le n-4$. 
By similar arguments, we cannot have $m=n$ and $a = \overline{(n{-}3)0}$. 

For $n = 3$, Table~\ref{ta:1} gives that $\max_{\pi \in \mathcal{S}_n}B_-(\pi) = b(1 \overline{0}) = \frac{1+\sqrt{5}}{2}$, and the maximum is attained only for $\pi = 312$.
In the following, let $n \ge 4$. 
We have just seen that $B_-(\pi) < n-2$ for all but $4$ permutations $\pi \in \mathcal{S}_n$. 
Moreover, we have 
\begin{gather*}
B_-(12\cdots n) = B_-(12\cdots (n{-}2)n(n{-}1)) = b(\overline{n{-}2}) = n-2, \\
B_-(n(n{-}1)\cdots1) = 	\begin{cases}
							b( \overline{(n{-}2) (n{-}3) \cdots 1 0 0}) = b( \overline{(n{-}2) (n{-}3) \cdots 1 1})	& \text{if } n  \text{ is odd},  \\
							b((n{-}2) (n{-}3) \cdots 1 \overline{0})		& \text{if } n  \text{ is even}, 
							\end{cases} \\
B_-(n(n{-}1)\cdots312) =  	\begin{cases}
								b((n{-}2) (n{-}3) \cdots 1 \overline{0}) 	& \text{if } n  \text{ is odd},  \\
								b((n{-}2) (n{-}3) \cdots 2 \overline{10})& \text{if } n  \text{ is even}.
								\end{cases}
\end{gather*}
By Theorem~2 of~\cite{Steiner13}, we have $d_{-b(w)}(1) = w$ for all 
\[
w \in \big\{(n{-}2) (n{-}3) \cdots 1 \overline{0},\, \overline{n-2},\, \overline{(n{-}2) (n{-}3) \cdots 1 1},\, (n{-}2) (n{-}3) \cdots 2 \overline{10}\big\},
\]
except for $w = 2\overline{10}$, as $b(2\overline{10}) = 2$. 
For $w = (n{-}2) (n{-}3) \cdots 1 \overline{0}$, we have $\lfloor b(w) \rfloor = n-2$, $w > \overline{n-2}$, $w > (n{-}2) (n{-}3) \cdots 2 \overline{10}$ if $n$ is even, $w > \overline{(n{-}2) (n{-}3) \cdots 1 1}$ if $n$ is odd.
Hence $\max_{\pi \in \mathcal{S}_n}B_-(\pi) = b(w) \in (n-2,n-1)$, and the maximum is attained only for $\pi = n(n{-}1)\cdots1$ if $n$ is even, $\pi = n(n{-}1)\cdots 312$ if $n$ is odd. 
\end{proof}

\begin{proof}[Proof of Theorem~\ref{t:Yrrap}]
By Theorem~\ref{t:main}, $B_-(\pi)$ is an Yrrap number for all $\pi \in \mathcal{S}_n$ with $B_-(\pi) > 1$.
Let now $\beta > 1$ be an Yrrap number, $w = d_{-\beta}(1)$, and $p,q \ge 1$ minimal such that $w_{[p+q,\infty)} = w_{[q,\infty)}$, i.e., $d_{-\beta}(1) = w_{[1,q)} \overline{w_{[q,p+q)}}$. 
Define $\varrho \in \mathcal{S}_{p+q}$ by
\begin{itemize}
\itemsep.5ex
\item
$\varrho(i) < \varrho(j)$ if $w_{[i,\infty)} < w_{[j,\infty)}$, $1 \le i, j < p+q$, 
\item
$\varrho(p+q) = \varrho(q) - (-1)^{p+q}$.
\end{itemize}
We now define $\pi$ by increasing the differences in~$\varrho$ and putting the missing elements at the beginning, ordered by growth. 
More precisely, define integers~$y_j$ recursively for $j = \varrho^{-1}(p+q), \ldots, \varrho^{-1}(3), \varrho^{-1}(2)$, $1 \le i \le p+q$ such that $\varrho(i) = \varrho(j)-1$, by
\[
y_j = \begin{cases}0 & \mbox{if}\ w_j = w_i,\ \mbox{or}\ w_j = w_i+1,\ \varrho(i+1) < \varrho(j+1), \\
1 & \mbox{if}\ w_j = w_i+1,\, \varrho(i+1) > \varrho(j+1),\ \varrho(i+1) < \varrho(j)\ \mbox{or} \\ 
& \quad y_k \ge 1\ \mbox{for some}\ 1 \le k \le p+q\ \mbox{with}\ \varrho(j) < \varrho(k) \le \varrho(j+1), \\
2 & \mbox{if}\ w_j = w_i+1,\ \varrho(i+1) > \varrho(j+1),\ \varrho(i+1)  > \varrho(j)\ \mbox{and} \\
& \quad y_k = 0\ \mbox{for all}\ \varrho(j) < \varrho(k) \le \varrho(j+1). \\
1 & \mbox{if}\ w_j = w_i+2,\ \varrho(i+1) < \varrho(j)\ \mbox{and}  \\
& \quad y_k \ge 1\ \mbox{for some}\ \varrho(j) < \varrho(k) \le \varrho(j+1), \\
w_j- w_i & \mbox{if}\ w_j \ge w_i+2,\ \varrho(i+1) < \varrho(j)\ \mbox{or}\ \\
& \quad y_k \ge 1\ \mbox{for some}\ \varrho(j) < \varrho(k) \le \varrho(j+1), \\
w_j - w_i + 1 & \mbox{if}\ w_j \ge w_i+2,\ \varrho(i+1) > \varrho(j), \ \mbox{and} \\
& \quad y_k \ge 1\ \mbox{for some}\ \varrho(j) < \varrho(k) \le \varrho(j+1), \\
w_j - w_i - 1 & \mbox{if}\ w_j \ge w_i+3,\ \varrho(i+1) < \varrho(j)\ \mbox{and} \\
& \quad y_k = 0\ \mbox{for all}\ \varrho(j) < \varrho(k) \le \varrho(j+1),
\end{cases}
\]
with $\varrho(p+q+1) = \varrho(q+1)$. 
For $\varrho(j) = 1$, set
\[
y_j = w_j + \begin{cases}1 & \mbox{if}\ y_k = 0\ \mbox{for all}\ 1 \le k \le p+q\ \mbox{with}\ 1 < \varrho(k) \le \varrho(j+1), \\ 0 & \mbox{otherwise.}\end{cases}
\]
Set $c = \sum_{j=1}^{p+q} y_j$, $n = c+p+q$, and define $\pi \in \mathcal{S}_n$ by 
\begin{itemize}
\itemsep.5ex
\item
$\pi(c+j) = \varrho(j) + \sum_{1\le k\le p+q:\, \varrho(k)\le\varrho(j)} y_k$ for $1 \le j \le p+q$, 
\item
$\pi(i) < \pi(j)$ if $1 \le i < j \le c$. 
\end{itemize}

With the notation of Theorem~\ref{t:main}, we show that $a = w$ for this choice of~$\pi$, thus $B_-(\pi) = b(d_{-\beta}(1))=\beta$.
First note that $m = c + \varrho^{-1}(p+q)$ and $\pi(n) = \pi(c+q) - (-1)^{p+q}$ since $w_{p+q} = w_q$, thus 
\[
\begin{cases}
m = c+1,\ r = c+q, & \mbox{if}\ q \ge 1,\ \mbox{$p+q$ even}, \\
m = c+1,\ \ell = c+q, & \mbox{if}\ q \ge 2,\ \mbox{$p+q$ odd}, \\
m = n,\ \ell = c+1, & \mbox{if}\ q = 1,\ \mbox{$p+q$ odd}.
\end{cases}
\]
Therefore, $z_{[m,n)} \overline{z_{[\ell,n)}}$ and $z_{[m,n)} \overline{z_{[r,n)}}$ respectively are equal to $z_{[c+1,c+q)} \overline{z_{[c+q,n)}}$.
To prove that $z_{c+i} = w_i$ for all $1 \le i \le p+q$, with $z_n = z_{c+q}$, compare $z_{c+j} - z_{c+i}$ to $w_j - w_i$ for $\varrho(i) = \varrho(j)-1 \ne 0$.
Let $h = \sum_{1\le k\le p+q:\, \varrho(k)\le\varrho(j)} y_k$.
Then we have 
\[
\pi(h-k) = \pi(h)-k\ \mbox{for}\ 0 \le k < y_j,\ \pi(h-y_j+1) = \pi(c+i)+1,\ \pi(h) = \pi(c+j)-1.
\]
We obtain that $z_{h-k} = z_{h-k-1} + 1$ for $1 \le k \le y_j-2$.
For $y_j \ge 1$, we have 
\[
z_{c+j} = z_h + \begin{cases}1 & \mbox{if}\ y_k \ge 1\ \mbox{for some}\ 1 \le k \le p+q\ \mbox{with}\ \varrho(j) < \varrho(k) \le \varrho(j+1), \\
0 & \mbox{otherwise}.\end{cases}
\]
Indeed, for $j \ne p+q$, we have $\pi(c+j+1) > \pi(h+1)$ if and only if $y_k \ge 1$ for some $1 \le k \le p+q$ with $\varrho(j) < \varrho(k) \le \varrho(j+1)$.
For $j = p+q$ odd, we have $i = q$, thus $w_j = w_i$ and $y_j = 0$.
For $j = p+q$ even, we have $z_{c+j} = z_{c+q}$, and $y_k \ge 1$ for some $\varrho(p+q) < \varrho(k) \le \varrho(p+q+1)$ is equivalent to $y_k \ge 1$ for some $\varrho(q) < \varrho(k) \le \varrho(q+1)$ (as $\varrho(p+q+1) = \varrho(q+1)$, $\varrho(q) = \varrho(p+q) + 1$ and $y_q = 0$). 
For $y_j \ge 2$, we have 
\[
z_{h-y_j+1} = z_{c+i} + \begin{cases}1 & \mbox{if}\ \varrho(i+1) < \varrho(j), \\
0 & \mbox{otherwise.} \end{cases}
\]
Here, $\pi(h-y_j+2) > \pi(c+i+1)$ is equivalent to $\varrho(i+1) < \varrho(j)$ for $i \ne p+q$; for $i = p+q$, we have $z_{c+i} = z_{c+q}$ and $\varrho(p+q+1) = \varrho(q+1)$.
If $y_j = 1$, then 
\[
z_h = z_{c+i} + \begin{cases}1 & \mbox{if}\ y_k = 0\ \mbox{for all}\ 1 \le k \le p+q\ \mbox{with}\ \varrho(j) < \varrho(k) \le \varrho(i+1), \\
0 & \mbox{otherwise.} \end{cases}
\]
Finally, if $y_j = 0$, then $z_{c+j} = z_{c+i} + 1$ if $\varrho(i+1) > \varrho(j+1)$, $z_{c+j} = z_{c+i}$ otherwise. 
Summing up these differences shows that $z_{c+j} - z_{c+i} = w_j - w_i$ for $\varrho(i) = \varrho(j)-1 \ne 0$.
For $\varrho(j) = 1$, we have $z_k = k-1$ for all $k \le h = y_j$ and $z_{c+j} - z_h$ is given above, thus $z_{c+j} = w_j$. 

Therefore, we have $z_{[c+1,n)} = w_{[1,n)}$. 
The minimality of $p$ and~$q$ gives that $\pi$ is not collapsed, hence $a = w_{[1,q)} \overline{w_{[q,p+q)}} = d_{-\beta}(1)$. 
\end{proof}

\bibliographystyle{ws-ijfcs}
\bibliography{permutations}
\label{sec:biblio}
\end{document}